\documentclass{article}
\usepackage{iclr2027_conference,times}
\usepackage{amsmath,amssymb,amsthm,mathtools,booktabs,graphicx}
\usepackage{microtype,float,needspace}
\usepackage[hidelinks]{hyperref}
\hypersetup{pdftitle={Convergence Analysis of STORM Under Different Geometries},pdfauthor={Wei Jiang, Dingzhi Yu, Sifan Yang, Wenhao Yang, Zechao Li, Lijun Zhang}}
\newcommand{\E}{\mathbb E}
\newcommand{\R}{\mathbb R}
\newcommand{\norm}[1]{\left\|#1\right\|}
\newcommand{\inner}[2]{\left\langle#1,#2\right\rangle}
\newcommand{\ip}[2]{\inner{#1}{#2}}
\newcommand{\Ls}{\mathcal L}
\newcommand{\cF}{\mathcal F}

\newtheorem{theorem}{Theorem}
\newtheorem{lemma}[theorem]{Lemma}

\newtheorem{corollary}[theorem]{Corollary}
\theoremstyle{definition}
\newtheorem{assumption}{Assumption}
\theoremstyle{remark}

\allowdisplaybreaks[1]
\title{Convergence Analysis of STORM\\ Under Different Geometries}
\author{
{Wei Jiang}$^1$ \quad
{Yibo Wang}$^{2}$ \quad
{Wenhao Yang}$^{2}$ \quad
{Rui Yan}$^{1}$ \quad
{Lijun Zhang}$^2$ \quad
{Zechao Li}$^{1}$ \\[0.5em]
$^1$School of Computer Science and Engineering, \\
Nanjing University of Science and Technology, Nanjing, China \\
$^2$School of Artificial Intelligence, Nanjing University, Nanjing, China \\}
\iclrfinalcopy
\begin{document}
\maketitle

\begin{abstract}
Stochastic recursive momentum (STORM) achieves fast convergence for
nonconvex optimization via the variance reduction effect, but existing analyses  rely on the strong average smoothness assumption. In this paper, we study the convergence of STORM for different objectives without average smoothness.
We first revisit the results under average smoothness, obtaining the  $O(T^{-1/3})$ bound for nonconvex objectives and the
$O(\sigma^2/(\mu T))$ bound for last-iterate output under the
$\mu$-Polyak--\L{}ojasiewicz~(PL) condition. Without average smoothness, we design an auxiliary sequence and compare the STORM update with it in the analysis. 
With the help of this sequence, we prove that STORM still attains an $O(T^{-1/4})$ rate for nonconvex objectives, which is optimal under standard smoothness.
For convex and $\lambda$-strongly convex objectives, we further prove
averaged and last-iterate bounds with optimal rates of
$O(\sigma R/\sqrt T)$ and $O(\sigma^2/(\lambda T))$, respectively.
All the obtained results use the same STORM recursion with different hyperparameter choices. 
\end{abstract}

\section{Introduction}
\label{sec:introduction}
In this paper, we consider the stochastic optimization problem
\begin{equation}
 \min_{x\in\R^d} F(x),\qquad F(x)=\E_{\xi\sim\mathcal D}[f(x;\xi)],
 \label{eq:problem}
\end{equation}
where function $F(\cdot)$ is differentiable and $L$-smooth. We have access to an unbiased
stochastic gradient $g(x,\xi)$ with bounded variance such that
$\E_\xi g(x,\xi)=\nabla F(x)$ and
$\E_\xi\norm{g(x,\xi)-\nabla F(x)}^2\le\sigma^2$.
We measure the convergence rate by the expected gradient norm $\E\norm{\nabla F(x)}$ for the general nonconvex functions and by the
expected function gap $\E  [F(x)] - F^*$ for the other cases.

For nonconvex objectives, stochastic gradient descent (SGD) attains
an optimal $O(T^{-1/4})$ convergence rate under the smoothness assumption
\citep{ghadimi2013stochastic}. Variance reduction can further improve this
rate to $O(T^{-1/3})$ under the
additional average smoothness condition \citep{fang2018spider,arjevani2023lower}.
Among such methods, STORM \citep{cutkosky2019storm} is attractive
because it uses a constant batch size
per iteration and does not require periodically computing gradients with large batches.
Starting from $d_0=g(x_0,\xi_0)$, it performs
\begin{align}
 x_{t+1}&=x_t-\eta_t d_t,\label{eq:primal}\\
 d_{t+1}&=g(x_{t+1},\xi_{t+1})+(1-a_{t+1})
                   \bigl(d_t-g(x_t,\xi_{t+1})\bigr).
 \label{eq:estimator}
\end{align}
Here $\eta_t$ is the stepsize and $a_{t+1}$ is the momentum parameter.

Although STORM achieves the optimal rate for nonconvex objectives under average smoothness, many issues remain to be investigated.
First, existing analysis relies on the stronger \emph{average smoothness} condition, which assumes that $\E_\xi\norm{g(x,\xi)-g(y,\xi)}^2 \le \Ls^2 \norm{x-y}^2$. It remains unknown whether STORM can achieve the same $O(T^{-1/4})$ rate as SGD under standard smoothness alone. Second, existing guarantees do not establish optimal convergence rates for convex or strongly convex objectives without the average smoothness assumption. 

In this paper, we analyze these questions for the original STORM framework. First, we revisit the previous analysis under average smoothness and obtain the following results through a simpler and more transparent analysis. We do not regard the average smoothness results as the main contribution.
\begin{itemize}
 \item \underline{With average smoothness}, we first improve the previous analysis of STORM, achieving the $O(T^{-1/3})$ rate for nonconvex functions. Compared with previous analysis, we remove the logarithmic factor in the bound and avoid using large batches in the first iteration.
 \item \underline{With average smoothness and the $\mu$-PL condition},
 we obtain a \emph{last-iterate} bound with the convergence rate of
 $O(\sigma^2/(\mu T))$ via a two-phase stepsize design, allowing exponential decay of the initialization term.
\end{itemize}
Then, we investigate the convergence of STORM without the average smoothness condition, achieving the following results, which are the main contributions of this paper.
\begin{itemize}
 \item \underline{With standard smoothness}, we prove the $O(T^{-1/4})$ convergence rate for STORM, which matches the classical SGD rate and is optimal under standard assumptions.
 \item \underline{For smooth and convex objectives}, we obtain the weighted-mean and last-iterate bounds of order
 $\mathcal{O}(LR^2/T+\sigma R/\sqrt T)$, where $R$ is the initial distance to a minimizer. 
 \item \underline{For smooth and $\lambda$-strongly convex objectives}, we demonstrate that the weighted-mean and last-iterate bounds can be further improved to
 $\mathcal{O}(\sigma^2/(\lambda T))$.
\end{itemize}
Table~\ref{tab:results} summarizes the above convergence guarantees, which are all optimal in terms of the iteration number $T$ under the corresponding settings.

\begin{table}[t]
\caption{Convergence of STORM under different settings.
All rows assume an $L$-smooth objective and an unbiased oracle with variance at most $\sigma^2$. AS denotes average smoothness.
The nonconvex rows measure the expected gradient norm, while the other rows measure the expected function gap. Only the dominant term is presented.}
\label{tab:results}
\centering
\begin{tabular}{llll}
\toprule
Objective & Assumption & Output & Convergence rate\\
\midrule
Nonconvex & AS & Uniform random & $T^{-1/3}$\\
Nonconvex & $-$ & Uniform random & $T^{-1/4}$\\
$\mu$-PL & AS & Last iterate & $\sigma^2/(\mu T)$\\
Convex & $-$ & Weighted mean; last iterate& $\sigma R/\sqrt T$\\
$\lambda$-strongly convex & $-$ & Weighted mean; last iterate& $\sigma^2/(\lambda T)$\\
\bottomrule
\end{tabular}
\end{table}

\section{Related work}
\label{sec:related}
\paragraph{Variance reduction for nonconvex objectives.}
SARAH \citep{nguyen2017sarah} first introduced a recursive estimator based on successive stochastic
gradient differences. SPIDER \citep{fang2018spider} then uses this principle
to achieve $O(\varepsilon^{-3})$ stochastic gradient complexity and
$O(n+\sqrt n\,\varepsilon^{-2})$ finite-sum complexity for ensuring
$\E\norm{\nabla F(x)}\le\varepsilon$, where $n$ is the number of component
functions. These guarantees assume average smoothness. SPIDER periodically
refreshes its estimator using a large batch or the full gradient.
STORM \citep{cutkosky2019storm} instead incorporates a fresh-gradient difference term at every step,
removing these periodic refreshes. However, the original STORM analysis contains logarithmic factors in the rate.
In particular, its guarantee is
$O(\log(T)T^{-2/3})$ in expected squared gradient norm, hence
$O(\sqrt{\log(T)}T^{-1/3})$ in expected norm.\footnote{In
\citet{cutkosky2019storm}, the last simplification in Theorem~1's
proof drops a factor $\sqrt M$ uncarefully, where $M=O(\log(T+2))$. Furthermore, their Appendix~B, Theorem~2, explicitly retains
the logarithmic factor for the nonadaptive guarantee.} Other variants of STORM have also been investigated in the literature. STORM+ 
\citep{levy2021stormplus} obtains
$O(T^{-1/3})$ with adaptive parameters under the extra assumption of 
bounded gradients and bounded objective value. Ada-STORM \citep{jiang2024adastorm} attains the same rates without additional assumptions but with a large initial batchsize on the order $\mathcal{O}(T^{1/3})$.

\paragraph{STORM-based methods for convex optimization.}
The STORM estimator has also been studied beyond nonconvex
optimization.
\cite{zhang2020onesample} combine the STORM estimator with
Frank--Wolfe updates and obtain an $O(T^{-1/2})$ expected
function-gap bound for convex objectives.
However, their analysis requires a bounded feasible set, samplewise smoothness, and bounded stochastic gradients.
More recently, \citet{dahan2025double} combine STORM with
anytime averaging in $\mu^2$-SGD and attain the 
$O(T^{-1/2})$ rate.
Their method evaluates gradients at running averages of a
separate projected sequence, and its analysis uses samplewise
smoothness and a bounded feasible set.

\paragraph{Variance reduction under the $\mu$-PL condition.}
The variance reduction technique PAGE \citep{li2021page} obtains an expected function gap of at most $\varepsilon$ with
$O((B+\sqrt B\,\Ls/\mu)\log(2\Delta_0/\varepsilon))$
stochastic gradient evaluations, where
$B=\lceil2\sigma^2/(\mu\varepsilon)\rceil$. This result assumes the average smoothness and bounded variance, and its refresh batch depends on the target accuracy.
For compositional objectives, RECOVER \citep{qi2021recover} obtains the sample complexity
$O(\max\{\mu^{-3/2}\varepsilon^{-1/2},\mu^{-1}\varepsilon^{-1}\})$. 
For stochastic multi-level compositional optimization,
\citet{jiang2022multilevel} develop stage-wise SMVR using
STORM estimators.
For objectives satisfying the $\mu$-PL condition, their
stage-wise analysis gives complexity
$O(\mu^{-1}\varepsilon^{-1}
+\mu^{-3/2}\varepsilon^{-1/2})$,
which reduces to $O((\mu\varepsilon)^{-1})$
when $\varepsilon \le \mu$.

\section{Assumptions}
\label{sec:model}
We now state the assumptions for problem~\eqref{eq:problem}. The first
two assumptions are used throughout the paper and the remaining assumptions specify the setting of each theorem.

\begin{assumption}[Standard smoothness]
\label{ass:smooth}
The differentiable objective $F:\R^d\to\R$ has finite infimum
$F^*=\inf_xF(x)$ and satisfies
\begin{equation}
 \norm{\nabla F(x)-\nabla F(y)}\le L\norm{x-y}
 \quad\text{for all }x,y\in\R^d.
 \label{eq:population-smooth}
\end{equation}
\end{assumption}

\begin{assumption}[Stochastic oracle]
\label{ass:oracle}
The stochastic oracle $g$ is unbiased and has
bounded variance:
\begin{equation}
 \E_\xi g(x,\xi)=\nabla F(x),\qquad
 \E_\xi\norm{g(x,\xi)-\nabla F(x)}^2\le\sigma^2
 \quad\text{for all }x\in\R^d.
 \label{eq:bv}
\end{equation}
\end{assumption}

\begin{assumption}[Average smoothness]
\label{ass:average}
Stochastic gradient differences satisfy
\begin{equation}
 \E_\xi\norm{g(x,\xi)-g(y,\xi)}^2
       \le\Ls^2\norm{x-y}^2\quad\text{for all }x,y\in\R^d.
 \label{eq:mss}
\end{equation}
Note that this assumption is also called mean-square smoothness. 
By Jensen's inequality, Assumptions~\ref{ass:oracle}
and~\ref{ass:average} imply that $F$ is also $\Ls$-smooth, so we assume $L\le\Ls$ without loss of generality.
\end{assumption}

\begin{assumption}[PL condition]
\label{ass:pl}
For some $0<\mu\le L$, the objective satisfies
\begin{equation}
 \tfrac12\norm{\nabla F(x)}^2\ge\mu(F(x)-F^*)
 \quad\text{for all }x\in\R^d.
 \label{eq:pl}
\end{equation}
\end{assumption}

\begin{assumption}[Convexity]
\label{ass:convex}
The objective $F$ has a minimizer $x^*$ and satisfies
\begin{equation}
 F(y)\ge F(x)+\inner{\nabla F(x)}{y-x}
 \quad\text{for all }x,y\in\R^d.
 \label{eq:convexity}
\end{equation}
\end{assumption}

\begin{assumption}[Strong convexity]
\label{ass:strong}
The objective $F$ has a minimizer $x^*$ and  satisfies
\begin{equation}
 F(y)\ge F(x)+\inner{\nabla F(x)}{y-x}
                         +\tfrac\lambda2\norm{y-x}^2
 \quad\text{for all }x,y\in\R^d.
 \label{eq:strong-convexity}
\end{equation}
Strong convexity also implies the PL condition with $\mu=\lambda$, and the converse does not hold. 
\end{assumption}
We initialize at $x_0$, and define $\Delta_0=F(x_0)-F^*$,
 $R=\norm{x_0-x^*}$ and $\kappa=L/\lambda$ when applicable.

\section{Convergence under Average Smoothness}
\label{sec:nonconvex}
We first revisit the convergence results under average smoothness~(Assumption~\ref{ass:average}). We begin
with the general nonconvex objectives, then use the PL condition to obtain the function-gap guarantee.

\subsection{Nonconvex Objectives}
For the STORM algorithm, the estimator $d_t$  tracks the true function gradient $\nabla F(x_t)$, and we define 
$q_t=\E\norm{d_t-\nabla F(x_t)}^2$ for its mean squared error. Average smoothness bounds
the movement error by a term proportional to
$\Ls^2\eta_t^2\E\norm{d_t}^2$ via  $\E\norm{g(x_{t+1},\xi_{t+1})-g(x_t,\xi_{t+1})}^2\leq \Ls^2\E\norm{x_{t+1}-x_t}^2$. The descent inequality also contains a
negative term involving $\E\norm{d_t}^2$. To cancel them, we
use the potential
$\Psi_t=\E[F(x_t)-F^*]+\eta q_t/(2a_t)$ to obtain
\[
 \Psi_{t+1}-\Psi_t\le
 -\frac\eta2\E\norm{\nabla F(x_t)}^2+\eta a_{t+1}\sigma^2.
\]
The sum of $a_{t+1}=(t+3)^{-2/3}$ is $O(T^{1/3})$, which leads
to the following bound.

\begin{theorem}
\label{thm:nonconvex}
Under Assumptions~\ref{ass:smooth}, \ref{ass:oracle}, and \ref{ass:average}, 
setting
$\eta_t=\eta=\frac1{2\Ls(T+2)^{1/3}}$, $
 a_t=(t+2)^{-2/3}$, we have the following guarantee
\begin{align}
 \frac1T\sum_{t=0}^{T-1}\E\norm{\nabla F(x_t)}^2
 \le{}&\frac{4\Ls\Delta_0(T+2)^{1/3}}T
          +\frac{2^{2/3}\sigma^2}T+\frac{6\sigma^2}{T}
                 \bigl((T+2)^{1/3}-2^{1/3}\bigr).
\label{eq:ncbound}\end{align}
For $I$ uniform on $\{0,\ldots,T-1\}$ and independent of the trajectory,
\[
 \E\norm{\nabla F(x_I)}
 =O(\sqrt{\Ls\Delta_0+\sigma^2}\,T^{-1/3}).
\]
When
$\sigma=0$, the alternative choice $\eta=1/(4\Ls)$ and $a_t=1/4$
gives 
\[
 \E\norm{\nabla F(x_I)}
 =O(\sqrt{\Ls\Delta_0}/\sqrt T).
\]
\end{theorem}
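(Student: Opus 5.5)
The proof follows the potential-function argument sketched before the statement. I will establish two one-step inequalities and combine them through $\Psi_t=\E[F(x_t)-F^*]+\eta q_t/(2a_t)$.

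\emph{Descent step.} By $L$-smoothness with $x_{t+1}=x_t-\eta d_t$ and the identity $-\inner{\nabla F}{d}=\tfrac12(\norm{\nabla F-d}^2-\norm{\nabla F}^2-\norm d^2)$,
\[
\E F(x_{t+1})\le \E F(x_t)-\tfrac\eta2\E\norm{\nabla F(x_t)}^2-\bigl(\tfrac\eta2-\tfrac{L\eta^2}2\bigr)\E\norm{d_t}^2+\tfrac\eta2 q_t .
\]

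\emph{Error recursion.} Write
\[
d_{t+1}-\nabla F(x_{t+1})=(1-a_{t+1})(d_t-\nabla F(x_t))+a_{t+1}\bigl(g(x_{t+1},\xi_{t+1})-\nabla F(x_{t+1})\bigr)+(1-a_{t+1})Z_{t+1},
\]
where
\[
Z_{t+1}=g(x_{t+1},\xi_{t+1})-g(x_t,\xi_{t+1})-\nabla F(x_{t+1})+\nabla F(x_t).
\]
Conditioned on the past, the last two terms have zero mean, so the cross term with $d_t-\nabla F(x_t)$ vanishes. Using $(u+v)^2\le2u^2+2v^2$, variance $\le$ second moment, and Assumption~\ref{ass:average}, we get
\[
q_{t+1}\le(1-a_{t+1})^2q_t+2a_{t+1}^2\sigma^2+2\Ls^2\eta^2\E\norm{d_t}^2 .
\]

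\emph{Combining.} Multiply the error recursion by $\eta/(2a_{t+1})$. The $q$-coefficient becomes
\[
\frac{\eta(1-a_{t+1})^2}{2a_{t+1}}\le \frac{\eta}{2a_{t+1}}-\eta+\frac{\eta a_{t+1}}2 .
\]
The crucial check is that $\frac1{a_{t+1}}-\frac1{a_t}=(t+3)^{2/3}-(t+2)^{2/3}\le \tfrac23(t+2)^{-1/3}\le 1/2$ for $t\ge0$, and that $a_t\le 1$. Then the leftover $q_t$ terms (namely $\tfrac\eta2 q_t$ from descent, $-\eta q_t+\tfrac{\eta a_{t+1}}2q_t$, and $+\tfrac\eta4q_t$ from the index shift) sum to a nonpositive amount. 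The $\E\norm{d_t}^2$ terms cancel because
\[
\frac{\eta\cdot 2\Ls^2\eta^2}{2a_{t+1}}=\frac{\Ls^2\eta^3}{a_{t+1}}\le \frac{\eta}{4},
\]
since $\eta^2/a_{t+1}\le \frac{(T+2)^{2/3}}{4\Ls^2(T+2)^{2/3}}$ for $t+1\le T$. Together with $L\eta\le1/2$ this makes the net $\E\norm{d_t}^2$ coefficient nonpositive. The noise term is $\eta a_{t+1}\sigma^2$. This yields the displayed inequality
\[
\Psi_{t+1}-\Psi_t\le -\tfrac\eta2\E\norm{\nabla F(x_t)}^2+\eta a_{t+1}\sigma^2 .
\]
I expect this bookkeeping of constants to be the main obstacle; if it fails, I would adjust constant factors in the split.

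\emph{Telescoping.} Sum from $t=0$ to $T-1$, use $\Psi_T\ge0$ and $q_0\le\sigma^2$, and note $a_0=2^{-2/3}$, so $\Psi_0\le\Delta_0+\eta\sigma^2 2^{2/3}/2$. Then multiply by $2/(\eta T)$. The sum $\sum_{t=0}^{T-1}(t+3)^{-2/3}\le\int_2^{T+2}s^{-2/3}ds=3((T+2)^{1/3}-2^{1/3})$ gives the term $6\sigma^2(\cdot)/T$. The initial-error term gives $2^{2/3}\sigma^2/T$, and $2\Delta_0/(\eta T)$ gives $4\Ls\Delta_0(T+2)^{1/3}/T$. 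Jensen's inequality, $\E\norm{\nabla F(x_I)}\le(\tfrac1T\sum\E\norm{\nabla F(x_t)}^2)^{1/2}$, yields the $T^{-1/3}$ rate.

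\emph{Deterministic case.} When $\sigma=0$, the same argument applies with constant $a=1/4$ and $\eta=1/(4\Ls)$. The $q$-coefficient is then $\frac{\eta}{2a}(1-a)^2\le\frac{\eta}{2a}-\frac{\eta}{2}$, using $(3/4)^2\cdot2=9/8\le 2-1/2$. The $d$-term satisfies $\Ls^2\eta^3/a=\eta/4$, and the noise vanishes. This gives $\tfrac1T\sum\E\norm{\nabla F}^2\le 2\Psi_0/(\eta T)=8\Ls\Delta_0/T$ (with $q_0=0$), and the rate follows by Jensen's inequality again.
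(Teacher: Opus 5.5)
Your proposal is the paper's proof step for step: the same descent inequality and tracking recursion, the same potential $\Psi_t=\E[F(x_t)-F^*]+\eta q_t/(2a_t)$ with auxiliary weight $a_0=2^{-2/3}$, the same telescoping, integral comparison and Cauchy--Schwarz step, and the same deterministic variant. The one thing to repair is the constant check you flagged as the main obstacle. Two of your stated justifications there are wrong, even though the facts they are meant to establish are true.

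First, the mean-value bound $\frac23(t+2)^{-1/3}\le\frac12$ fails at $t=0$, since $\frac23\,2^{-1/3}\approx0.529$. You cannot skip the $t=0$ step, because the $2^{2/3}\sigma^2/T$ term comes from $a_0=2^{-2/3}$. The paper instead uses concavity of $u\mapsto u^{2/3}$, so every increment $a_{t+1}^{-1}-a_t^{-1}$ is at most $3^{2/3}-2^{2/3}\approx0.493<\frac12$.

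Second, after that bound the $q_t$ terms you list sum to at most $\frac\eta2\bigl(a_{t+1}-\frac12\bigr)$. This is nonpositive only if $a_{t+1}\le\frac12$, so $a_t\le1$ is not enough. It does hold here, because $a_{t+1}\le a_1=3^{-2/3}\approx0.481$.

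The margin at $t=0$ is small: the exact coefficient is $\frac\eta2\bigl(3^{2/3}-2^{2/3}-1+3^{-2/3}\bigr)\approx-0.013\,\eta$. Your looser mean-value estimate would actually make it positive. With these corrections, which are exactly the paper's conditions $a_{t+1}\le\frac12$, $a_{t+1}\ge4\Ls^2\eta^2$, and $a_{t+1}^{-1}-a_t^{-1}\le\frac12$, your argument is complete.
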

\paragraph{Remark.}
Theorem~\ref{thm:nonconvex} gives an $O(T^{-1/3})$ rate and
the corresponding gradient complexity is $O(\varepsilon^{-3})$, matching the lower bound for nonconvex objectives with average smoothness 
\citep{arjevani2023lower}.
The theorem removes the logarithmic factor in the original STORM
guarantee \citep{cutkosky2019storm} and avoids using a large initial batch on the order of $\mathcal{O}(T^{1/3})$ \citep{jiang2024adastorm}. 

\subsection{Objectives Satisfying the PL Condition}
\label{sec:pl-main}

The PL inequality converts the negative gradient-norm term in the
descent bound into a decrease proportional to the current function
gap, i.e., $-\norm{\nabla F(x_t)}^2\le -2\mu(F(x_t)-F^*)$. In this case, we set the potential
$V_t=\E[F(x_t)-F^*]+q_t/\mu$ and couple the parameters through
$a_{t+1}=\mu\eta_t$. The descent and tracking inequalities will then give
\begin{equation}
 V_{t+1}\le(1-\mu\eta_t)V_t+2\mu\eta_t^2\sigma^2
 \quad\text{when }\eta_t\le\mu/(8\Ls^2).
 \label{eq:plscalar}
\end{equation}
Then, we can prove the following theorem under the PL condition.
\begin{theorem}
\label{thm:pl}
\textbf{(Harmonic schedule)} Under Assumptions~\ref{ass:smooth}--\ref{ass:pl}, for
$s\ge16(\Ls/\mu)^2$, set $
 \eta_t=\frac2{\mu(t+s)}$, $ a_{t+1}=\frac2{t+s}$.
For every integer $T\ge0$, the last iterate satisfies
\begin{equation}
 \E[F(x_T)-F^*]\le
 \left(\frac{s}{T+s}\right)^2
       \left(\Delta_0+\frac{\sigma^2}{\mu}\right)
       +\frac{8\sigma^2}{\mu(T+s)}.
 \label{eq:plbound}
\end{equation}
\textbf{(Two-phase schedule)} Alternatively, let $m=\lfloor T/2\rfloor$ and use
\begin{equation}
 \mu\eta_t=a_{t+1}=
 \begin{cases}2/s,&0\le t<m,\\2/(s+t-m),&m\le t<T.\end{cases}
 \label{eq:pl-two-phase}
\end{equation}
Without resetting either state, this gives
\begin{equation}
 \E[F(x_T)-F^*]\le
 e^{-2m/s}\left(\Delta_0+\frac{\sigma^2}{\mu}\right)
       +\frac{24\sigma^2}{\mu(T+2s)}.
 \label{eq:plphase}
\end{equation}
\end{theorem}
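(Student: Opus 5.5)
The proof has three steps. First, derive the scalar recursion \eqref{eq:plscalar} for the potential $V_t=\E[F(x_t)-F^*]+q_t/\mu$. Then unroll it under each schedule.

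\emph{Recursion.} Smoothness gives the descent inequality
\[
F(x_{t+1})\le F(x_t)-\tfrac{\eta_t}{2}\norm{\nabla F(x_t)}^2-\tfrac{\eta_t}{2}(1-L\eta_t)\norm{d_t}^2+\tfrac{\eta_t}{2}\norm{d_t-\nabla F(x_t)}^2 .
\]
The PL condition turns $-\tfrac{\eta_t}2\norm{\nabla F(x_t)}^2$ into $-\mu\eta_t(F(x_t)-F^*)$.

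For the tracking error, write
\[
d_{t+1}-\nabla F(x_{t+1})=(1-a_{t+1})\bigl(d_t-\nabla F(x_t)\bigr)+a_{t+1}\bigl(g(x_{t+1},\xi_{t+1})-\nabla F(x_{t+1})\bigr)+(1-a_{t+1})Z_{t+1},
\]
where $Z_{t+1}=g(x_{t+1},\xi_{t+1})-g(x_t,\xi_{t+1})-\nabla F(x_{t+1})+\nabla F(x_t)$. Conditionally on the past, the last two terms have mean zero. Their second moment is at most $2a_{t+1}^2\sigma^2+2\Ls^2\eta_t^2\norm{d_t}^2$, using average smoothness and the fact that variance is at most the second moment. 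Hence
\[
q_{t+1}\le(1-a_{t+1})^2q_t+2a_{t+1}^2\sigma^2+2\Ls^2\eta_t^2\E\norm{d_t}^2 .
\]
Divide by $\mu$ and set $a_{t+1}=\mu\eta_t$. The $q_t$ coefficient becomes
\[
\tfrac{\eta_t}{2}+\tfrac{(1-\mu\eta_t)^2}{\mu}\le\tfrac{1-\mu\eta_t}{\mu}-\tfrac{\eta_t}{2}+\dots,
\]
which is at most $(1-\mu\eta_t)/\mu$ using $(1-a)^2\le 1-a$ and the slack $-\mu\eta_t/\mu$. More precisely, $(1-a)^2/\mu=(1-a)/\mu-a(1-a)/\mu$, and $a(1-a)/\mu=\eta_t(1-\mu\eta_t)\ge\eta_t/2$ absorbs the $\eta_t/2$ from descent. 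The noise term is $2\mu\eta_t^2\sigma^2$. The $\norm{d_t}^2$ coefficient is
\[
-\tfrac{\eta_t}{2}(1-L\eta_t)+\tfrac{2\Ls^2\eta_t^2}{\mu},
\]
which is nonpositive when $\eta_t\le\mu/(8\Ls^2)$, since then $L\eta_t\le 1/8$. This yields \eqref{eq:plscalar}. Under both schedules the largest stepsize is $2/(\mu s)$, and $s\ge16\Ls^2/\mu^2$ makes this at most $\mu/(8\Ls^2)$.

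\emph{Harmonic schedule.} Set $\mu\eta_t=2/(t+s)$. Multiply \eqref{eq:plscalar} by $(t+s)(t+s-1)$, noting that
\[
(1-\tfrac{2}{t+s})(t+s)(t+s-1)=(t+s-1)(t+s-2)\le (t+s-1)(t+s-2).
\]
Writing $w_t=(t+s-1)(t+s-2)$ gives the telescoping bound
\[
w_{t+1}V_{t+1}\le w_tV_t+\tfrac{8\sigma^2}{\mu}\tfrac{t+s-1}{t+s}\le w_tV_t+\tfrac{8\sigma^2}{\mu}.
\]
Summing, $V_T\le w_0V_0/w_T+8\sigma^2T/(\mu w_T)$. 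We have $w_0/w_T\le (s/(T+s))^2$ and $T/w_T\lesssim 1/(T+s)$. The last comparison is a constant-chasing step; if it is tight, use the weights $(t+s)(t+s-1)$ directly. Finally $V_0\le\Delta_0+\sigma^2/\mu$ because $q_0\le\sigma^2$, and $F(x_T)-F^*\le V_T$.

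\emph{Two-phase schedule.} In phase one the recursion with constant $\mu\eta=2/s$ gives
\[
V_m\le(1-2/s)^mV_0+\tfrac{2\mu\eta^2\sigma^2}{\mu\eta}\le e^{-2m/s}V_0+\tfrac{4\sigma^2}{\mu s}.
\]
Phase two is the harmonic schedule started at $V_m$ over $T-m\ge T/2$ steps. It gives
\[
V_T\le\bigl(\tfrac{s}{T-m+s}\bigr)^2V_m+\tfrac{8\sigma^2}{\mu(T-m+s)}.
\]
Since $(s/(T-m+s))^2\cdot\tfrac{4}{\mu s}\le \tfrac{4}{\mu(T-m+s)}$ and $T-m+s\ge (T+2s)/2$, the noise terms total at most $24\sigma^2/(\mu(T+2s))$. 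The contraction factor is at most $1$, leaving $e^{-2m/s}V_0$.

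The main obstacle I expect is the constant bookkeeping: choosing the telescoping weights so that the harmonic bound has exactly the stated constants $(s/(T+s))^2$ and $8$. The recursion derivation itself is routine.
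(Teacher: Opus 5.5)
Your proposal is correct and follows essentially the same route as the paper. It uses the same potential $V_t=\Delta_t+q_t/\mu$ with $a_{t+1}=\mu\eta_t$ and the same one-step recursion. Your telescoping weights $w_t=(t+s-1)(t+s-2)$ satisfy $\prod_{k=u}^{T-1}(k+s-2)/(k+s)=w_u/w_T$, which is exactly the product formula the paper uses to unroll, and the two-phase argument is identical.

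The step you left as ``$\lesssim$'' holds with constant one, because $(T+s-1)(T+s-2)-T(T+s)=(s-3)(T+s)+2\ge0$. Likewise $w_0/w_T\le(s/(T+s))^2$ follows from $(s-r)/(T+s-r)\le s/(T+s)$ for $r=1,2$. So the stated constants $8$ and $24$ come out as claimed.
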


\paragraph{Remark.}
The first hyperparameter schedule gives quadratic decay of the initialization
term and the stochastic order $\sigma^2/(\mu T)$. Then,
the alternative two-phase schedule first reduces the initial error geometrically
and then decreases the stepsize to reduce the stochastic error.
It carries $(x_t,d_t)$ between phases without restarting and introduces no extra non-constant factor in the leading stochastic term. 

The dominant order $O(1/(\mu T))$ is unimprovable, since lower bounds for strongly convex functions also hold for PL objectives
\citep{agarwal2012information}. One important feature of our results is that the same estimator runs through both phases with one 
sample per step, and the rate is for the last iterate. 
\paragraph{Core of the analysis.}
We have already proved $V_{t+1}\le(1-\mu\eta_t)V_t+2\mu\eta_t^2\sigma^2$ in the previous analysis. When using $\mu\eta_t=2/(s+t)$, we have:
\[
 \prod_{j=k+1}^{T-1}\left(1-\frac2{j+s}\right)
 =\frac{(k+s-1)(k+s)}{(T+s-2)(T+s-1)}.
\]
The initial error is thus multiplied by
$O(s^2/(T+s)^2)$, while each noise term's $(k+s)^{-2}$ factor
is canceled, up to constants, by its product weight $(k+s-1)(k+s)$. Their sum gives
$O(\sigma^2/[\mu(T+s)])$. In the first phase of the alternative
schedule, by using a constant learning rate $\mu\eta_t=2/s$, the inequality $(1-2/s)^m\le e^{-2m/s}$ gives exponential decay.

\section{Convergence without Average Smoothness}
\label{sec:standard}
Without the average smoothness, nearby query points may not
have nearby stochastic gradients. The main difficulty is the
conditional bias of the estimator $d_t$. Denote $e_t=d_t-\nabla F(x_t)$. We
separate this inherited error from a centered increment $\nu_t$ via
$e_t=\beta e_{t-1}+\nu_t$.
Bounded variance can control $\E\norm{e_t}^2$, but this alone would leave a nonvanishing term in the usual descent inequality.

We handle the inherited error in two ways. For nonconvex objectives,
we shift the iterate by a multiple of $e_{t-1}$, so that the resulting
recursion contains only the centered noise. Standard smoothness then
controls the difference between the two gradient evaluation points.
For convex objectives, we compare STORM with an auxiliary 
sequence driven by the same $\nu_t$. The noise then cancels in their
difference, which can be controlled by nonexpansiveness, or by
contraction under strong convexity. 

\subsection{Nonconvex Objectives}
\label{sec:population-nonconvex}

The faster convergence guarantee in Theorem~\ref{thm:nonconvex}
uses the average smoothness condition. Without it, we show that STORM can still attain the optimal
$O(T^{-1/4})$ rate with standard smoothness. 
\begin{theorem}
\label{thm:population-nonconvex}
Under Assumptions~\ref{ass:smooth} and \ref{ass:oracle}, fixing any $\beta\in(0,1)$ and setting
$a_{t+1}=1-\beta$ and $0<\eta_t=\eta\le 1/(2L)$, we have the following guarantee
\begin{equation}
 \frac1T\sum_{t=0}^{T-1}\E\norm{\nabla F(x_t)}^2
 \le\frac{2\Delta_0}{\eta T}
       +\frac{1+\beta}{(1-\beta)^3}L\eta\sigma^2.
 \label{eq:population-nc-general}
\end{equation}
When $\Delta_0,\sigma>0$, choose
$\eta=\min\{1/(2L),\sqrt{\Delta_0/(6L\sigma^2T)}\}$ and $0<\beta\le1/2$. An independent uniform index
$I\in\{0,\ldots,T-1\}$ satisfies
\begin{equation}
 \E\norm{\nabla F(x_I)}
= \mathcal{O}\left(\sqrt{L\Delta_0}\,T^{-1/2} + (L\Delta_0\sigma^2)^{1/4}\,T^{-1/4}\right).
 \label{eq:population-nc-rate}
\end{equation}
When
$\sigma=0$, the alternative choice $\eta=1/(2L)$ and $0<\beta\le1/2$
gives 
\[
 \E\norm{\nabla F(x_I)}
 =O(\sqrt{L\Delta_0}/\sqrt{T}).
\]
\end{theorem}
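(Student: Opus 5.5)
The plan is to analyze STORM through a shifted sequence in which the inherited bias of $d_t$ disappears, leaving only martingale-difference noise.

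\textbf{Step 1: error recursion.} With $a_{t+1}=1-\beta$, the estimator reads $d_{t+1}=g(x_{t+1},\xi_{t+1})+\beta(d_t-g(x_t,\xi_{t+1}))$. Writing $e_t=d_t-\nabla F(x_t)$, this gives exactly
\[
 e_{t+1}=\beta e_t+\nu_{t+1},\qquad \nu_{t+1}=\bigl(g(x_{t+1},\xi_{t+1})-\nabla F(x_{t+1})\bigr)-\beta\bigl(g(x_t,\xi_{t+1})-\nabla F(x_t)\bigr).
\]
Set $e_{-1}=0$ and $\nu_0=e_0$. Since $x_t,x_{t+1}$ are measurable with respect to $\cF_t=\sigma(\xi_0,\dots,\xi_t)$, we get $\E[\nu_{t+1}\mid\cF_t]=0$. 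The triangle inequality in $L^2$ together with Assumption~\ref{ass:oracle} gives $\E\norm{\nu_{t+1}}^2\le(1+\beta)^2\sigma^2$. Because the $\nu$'s are orthogonal, $e_t=\sum_{k\le t}\beta^{t-k}\nu_k$ satisfies
\[
 \E\norm{e_t}^2\le\frac{(1+\beta)^2\sigma^2}{1-\beta^2}=\frac{1+\beta}{1-\beta}\sigma^2 .
\]
Notably, no average smoothness is needed for this bound.

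\textbf{Step 2: shifted iterate.} Define $y_t=x_t-c\,e_{t-1}$ with $c=\eta\beta/(1-\beta)$, so that $y_0=x_0$. Substituting $x_{t+1}=x_t-\eta(\nabla F(x_t)+\beta e_{t-1}+\nu_t)$, the choice of $c$ solves $(\eta+c)\beta=c$. This yields
\[
 y_{t+1}=y_t-\eta\nabla F(x_t)-\tfrac{\eta}{1-\beta}\nu_t .
\]
Apply the $L$-smoothness descent lemma to $F$ at $y_t$. The cross term $\E\langle\nabla F(y_t),\nu_t\rangle$ vanishes: $y_t$ is $\cF_{t-1}$-measurable, and so is $x_t$, which together with unbiasedness gives $\E[\nu_t\mid\cF_{t-1}]=0$. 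The $t=0$ case, with $\nu_0=e_0$, also has zero mean. For the main term, use $-\langle\nabla F(y_t),\nabla F(x_t)\rangle=-\tfrac12\norm{\nabla F(x_t)}^2-\tfrac12\norm{\nabla F(y_t)}^2+\tfrac12\norm{\nabla F(y_t)-\nabla F(x_t)}^2$, and bound the last norm by $L^2c^2\norm{e_{t-1}}^2$ via standard smoothness. For the quadratic term, use $\norm{a+b}^2\le 2\norm{a}^2+2\norm{b}^2$ and $\eta\le1/(2L)$, so that $L\eta^2\norm{\nabla F(x_t)}^2$ is absorbed into $\tfrac{\eta}{2}\norm{\nabla F(x_t)}^2$. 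This leaves
\[
 \E F(y_{t+1})\le \E F(y_t)-\tfrac{\eta}{4}\E\norm{\nabla F(x_t)}^2+O\!\Bigl(\eta L^2c^2\E\norm{e_{t-1}}^2+\tfrac{L\eta^2}{(1-\beta)^2}\E\norm{\nu_t}^2\Bigr).
\]
The constants should be tuned more carefully, for instance by keeping the $-\tfrac{\eta}{2}\norm{\nabla F(y_t)}^2$ term or by bounding the quadratic term without the factor $2$, to hit the stated factor $2/(\eta T)$.

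\textbf{Step 3: summing and choosing parameters.} Plug in Step 1. The term $\eta L^2c^2\E\norm{e_{t-1}}^2$ is at most $\eta^3L^2\beta^2(1+\beta)\sigma^2/(1-\beta)^3$. Using $L\eta\le1/2$, this is at most $L\eta^2\cdot\frac{\beta^2(1+\beta)}{2(1-\beta)^3}\sigma^2$, the same order as the noise term $L\eta^2(1+\beta)^2\sigma^2/(1-\beta)^2$. Telescoping from $F(y_0)=F(x_0)$ and using $F(y_T)\ge F^*$ gives an average bound of the form $\frac{c_1\Delta_0}{\eta T}+C_\beta L\eta\sigma^2$ with $C_\beta=O\bigl((1+\beta)/(1-\beta)^3\bigr)$, i.e.\ \eqref{eq:population-nc-general}. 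With $\beta\le1/2$ the constant is absolute. Choosing $\eta=\min\{1/(2L),\sqrt{\Delta_0/(6L\sigma^2T)}\}$ balances the two terms, giving $O(L\Delta_0/T+\sqrt{L\Delta_0\sigma^2/T})$. Jensen, $\E\norm{\nabla F(x_I)}\le(\E\norm{\nabla F(x_I)}^2)^{1/2}$, then gives \eqref{eq:population-nc-rate}. The $\sigma=0$ case follows with $\eta=1/(2L)$.

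\textbf{Main obstacle.} The delicate points are the measurability argument that kills $\E\langle\nabla F(y_t),\nu_t\rangle$, and matching the exact constant $(1+\beta)/(1-\beta)^3$. For the latter, I would first try the cleaner route: apply the descent lemma with $\norm{y_{t+1}-y_t}^2$ expanded exactly, where the cross term $\langle\nabla F(x_t),\nu_t\rangle$ also has zero mean. This gives $\tfrac{L}{2}\bigl(\eta^2\norm{\nabla F(x_t)}^2+\tfrac{\eta^2}{(1-\beta)^2}\E\norm{\nu_t}^2\bigr)$ without the factor $2$.
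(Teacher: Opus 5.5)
Your overall route is the paper's own: the same innovation decomposition $e_t=\beta e_{t-1}+\nu_t$, the same shifted iterate $x_t-\frac{\eta\beta}{1-\beta}e_{t-1}$ whose update carries only centered noise, and descent along it, with standard smoothness controlling $\norm{\nabla F(x_t)-\nabla F(y_t)}$. However, Step~2 contains an arithmetic slip, and as written the argument does not establish the explicit inequality \eqref{eq:population-nc-general}. With $\norm{a+b}^2\le2\norm{a}^2+2\norm{b}^2$, the quadratic term contributes $L\eta^2\norm{\nabla F(x_t)}^2$. After the polarization identity, dropping $-\frac\eta2\norm{\nabla F(y_t)}^2$, the coefficient of $\norm{\nabla F(x_t)}^2$ is therefore $-\frac\eta2(1-2L\eta)$. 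This vanishes at $\eta=1/(2L)$ rather than equaling $-\frac\eta4$, so your displayed inequality needs $\eta\le1/(4L)$. The slip is not cosmetic: the theorem uses $\eta=1/(2L)$ in the noiseless case, and also whenever the minimum in the tuned stepsize is attained at $1/(2L)$.

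Moreover, each of your two suggested repairs on its own only recovers the coefficient $-\frac\eta4$, which gives $\frac{4\Delta_0}{\eta T}$. The factor-$2$ bound also doubles the $\nu$-term. You need both repairs at once. First, expand $\norm{y_{t+1}-y_t}^2$ exactly, using that $\inner{\nabla F(x_t)}{\nu_t}$ is conditionally centered. Second, bound $-\inner{\nabla F(y_t)}{\nabla F(x_t)}\le-\frac34\norm{\nabla F(x_t)}^2+\norm{\nabla F(x_t)-\nabla F(y_t)}^2$. The paper obtains this from $\inner{u}{v}\le\norm{u}^2+\norm{v}^2/4$. It also follows from your identity if you keep the term and use $-\frac12\norm{\nabla F(y_t)}^2\le-\frac14\norm{\nabla F(x_t)}^2+\frac12\norm{\nabla F(x_t)-\nabla F(y_t)}^2$. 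The coefficient then becomes $-\eta(\frac34-\frac{L\eta}2)\le-\frac\eta2$, and the bias term is $\eta L^2c^2\norm{e_{t-1}}^2$. Summation gives $\frac{2\Delta_0}{\eta T}+\frac{(1+\beta)^2}{(1-\beta)^2}L\eta\sigma^2+\frac{2\beta^2(1+\beta)}{(1-\beta)^3}L^2\eta^2\sigma^2$. Bound the last term using $2L\eta\le1$, then apply the identity $\frac{(1+\beta)^2}{(1-\beta)^2}+\frac{\beta^2(1+\beta)}{(1-\beta)^3}=\frac{1+\beta}{(1-\beta)^3}$. This gives exactly \eqref{eq:population-nc-general}, and your Step~3 then goes through unchanged.
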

\paragraph{Remark.}
When $\sigma>0$, the $O(T^{-1/4})$ rate matches SGD
\citep{ghadimi2013stochastic} and is optimal under standard smoothness \citep{arjevani2023lower}. The theorem also allows any fixed
$\beta\in(0,1/2]$.

\paragraph{Core of the analysis.} Define the estimation error
$e_t=d_t-\nabla F(x_t)$ and $e_{-1}=0$. The error obeys
$e_t=\beta e_{t-1}+\nu_t$, with $\nu_t=g(x_t,\xi_t)-\nabla F(x_t)
   -\beta\bigl(g(x_{t-1},\xi_t)-\nabla F(x_{t-1})\bigr)$ and $\nu_0=e_0$. Note that $\nu_t$ has conditional mean
zero and second moment at most $(1+\beta)^2\sigma^2$. Although
$e_t$ itself is conditionally biased, we can define the auxiliary iterations
$z_t=x_t-\eta\beta e_{t-1}/(1-\beta)$ which satisfy
\[
 z_{t+1}=z_t-\eta\nabla F(x_t)-\frac{\eta}{1-\beta}\nu_t.
\]
This removes the inherited error from the update.
Standard smoothness controls the difference between
$\nabla F(x_t)$ and $\nabla F(z_t)$, such that $\norm{\nabla F(x_t)-\nabla F(z_t)}^2 \le L^2\norm{x_t-z_t}^2 \leq \frac{L^2\eta^2\beta^2}{(1-\beta)^2}\norm{e_{t-1}}^2$. The shift thus has expected squared size $O(\eta^2\sigma^2)$ for fixed $\beta$. After deriving a descent argument, we can establish the bound~\eqref{eq:population-nc-general}. This argument uses the
smoothness of $F$ only, and it never bounds a difference of sampled
gradients by the distance between the query points.

\subsection{Convex Objectives}
\label{sec:convex-main}
We now use convexity to obtain function-gap guarantees without the
average smoothness assumption. The key observation is that standard
smoothness and convexity already provide useful stability
properties for the objective gradient map. These properties
allow us to compare STORM with a carefully designed auxiliary sequence, even when average smoothness is not assumed.
\begin{lemma}
\label{lem:geometry-main}
Under Assumptions~\ref{ass:smooth} and~\ref{ass:convex},
for every $0\le\eta\le1/L$ and all $x,y\in\R^d$,
\begin{equation}
 \norm{x-\eta\nabla F(x)-y+\eta\nabla F(y)}
 \le\norm{x-y}.
 \label{eq:nonexpansive-main}
\end{equation}
If strong convexity~(Assumption~\ref{ass:strong}) also holds, then
\begin{equation}
 \norm{x-\eta\nabla F(x)-y+\eta\nabla F(y)}
 \le(1-\lambda\eta)\norm{x-y}.
 \label{eq:contraction-main0}
\end{equation}
Under Assumption~\ref{ass:smooth} alone, even without convexity, we have
\begin{align}
 \norm{\nabla F(y)}^2
 &\le2L(F(y)-F^*),
 \label{eq:self-bound-main}\\
 F(x)-F^*
 &\le2(F(y)-F^*)+L\norm{x-y}^2.
 \label{eq:functioncompare}
\end{align}
\end{lemma}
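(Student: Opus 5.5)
We prove the four claims separately; all are standard consequences of smoothness and (strong) convexity. For \eqref{eq:nonexpansive-main}, set $u=x-y$ and $g=\nabla F(x)-\nabla F(y)$, and expand
\[
 \norm{u-\eta g}^2=\norm{u}^2-2\eta\inner{g}{u}+\eta^2\norm{g}^2 .
\]
For convex $L$-smooth $F$ we invoke co-coercivity, $\inner{g}{u}\ge\frac1L\norm{g}^2$. This gives $\norm{u-\eta g}^2\le\norm{u}^2-\eta(2/L-\eta)\norm{g}^2\le\norm{u}^2$ whenever $\eta\le 2/L$, and in particular for $\eta\le1/L$. To keep the argument self-contained, we would derive co-coercivity in the usual way. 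Apply the descent lemma to the convex, $L$-smooth function $\phi(z)=F(z)-\inner{\nabla F(x)}{z}$, which is minimized at $x$. This yields $F(y)\ge F(x)+\inner{\nabla F(x)}{y-x}+\frac1{2L}\norm{g}^2$. Symmetrizing in $x,y$ and adding gives the claim.

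For the contraction \eqref{eq:contraction-main0}, we would use the strongly convex version of co-coercivity:
\[
 \inner{g}{u}\ge\frac{\lambda L}{\lambda+L}\norm{u}^2+\frac1{\lambda+L}\norm{g}^2 ,
\]
obtained by applying co-coercivity to $F-\frac\lambda2\norm{\cdot}^2$, which is convex and $(L-\lambda)$-smooth. We expect the bookkeeping here to be the main obstacle, so a cleaner route is worth trying first. Write $x-\eta\nabla F(x)-(y-\eta\nabla F(y))=(I-\eta H)u$ with $H=\int_0^1\nabla^2F(y+s u)\,ds$, assuming twice differentiability. Then $\lambda I\preceq H\preceq LI$ gives $\norm{I-\eta H}\le\max\{|1-\eta\lambda|,|1-\eta L|\}=1-\eta\lambda$ for $\eta\le1/L$, since then $1-\eta L\ge0$. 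For merely differentiable $F$ we would return to the inequality above. Substituting it gives
\[
 \norm{u-\eta g}^2\le\Bigl(1-\frac{2\eta\lambda L}{\lambda+L}\Bigr)\norm{u}^2+\eta\Bigl(\eta-\frac2{\lambda+L}\Bigr)\norm{g}^2 .
\]
The last coefficient is nonpositive when $\eta\le1/L\le 2/(\lambda+L)$. Combined with $\norm{g}\ge\lambda\norm{u}$, this yields the factor $1-2\eta\lambda+\eta^2\lambda^2=(1-\eta\lambda)^2$. One must check that the middle coefficient sign allows using the lower bound $\norm{g}^2\ge\lambda^2\norm{u}^2$: it does, since that coefficient is negative. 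A short algebraic verification then finishes this part.

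For \eqref{eq:self-bound-main}, apply the descent lemma at $y$ with step $1/L$:
\[
 F^*\le F\bigl(y-\tfrac1L\nabla F(y)\bigr)\le F(y)-\frac1{2L}\norm{\nabla F(y)}^2 .
\]
This uses only smoothness and finiteness of $F^*$. For \eqref{eq:functioncompare}, again by the descent lemma,
\[
 F(x)\le F(y)+\inner{\nabla F(y)}{x-y}+\tfrac L2\norm{x-y}^2 .
\]
Young's inequality gives $\inner{\nabla F(y)}{x-y}\le\frac1{2L}\norm{\nabla F(y)}^2+\frac L2\norm{x-y}^2$, and \eqref{eq:self-bound-main} gives $\frac1{2L}\norm{\nabla F(y)}^2\le F(y)-F^*$. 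Subtracting $F^*$ from both sides yields $F(x)-F^*\le2(F(y)-F^*)+L\norm{x-y}^2$.
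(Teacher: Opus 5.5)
Your proposal is correct. For \eqref{eq:nonexpansive-main}, \eqref{eq:self-bound-main} and \eqref{eq:functioncompare} it is essentially the paper's argument: co-coercivity from the descent lemma applied to the tilted function $\phi$, then a $1/L$ gradient step, then the upper model plus Young's inequality.

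The contraction \eqref{eq:contraction-main0} is where you take a different route. You derive $\inner{g}{u}\ge\frac{\lambda L}{\lambda+L}\norm{u}^2+\frac1{\lambda+L}\norm{g}^2$ from co-coercivity of $H=F-\frac\lambda2\norm{\cdot}^2$ and expand $\norm{u-\eta g}^2$. You then trade the nonpositive coefficient of $\norm{g}^2$ against $\norm{g}\ge\lambda\norm{u}$. Your algebra does close to exactly $(1-\eta\lambda)^2\norm{u}^2$.

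The paper uses the same $H$ but avoids this bookkeeping. Since $\nabla F(z)=\nabla H(z)+\lambda z$, one has $x-\eta\nabla F(x)-y+\eta\nabla F(y)=(1-\lambda\eta)\bigl(x-\eta'\nabla H(x)-y+\eta'\nabla H(y)\bigr)$ with $\eta'=\eta/(1-\lambda\eta)$. The condition $\eta'\le1/(L-\lambda)$ is equivalent to $L\eta\le1$, so the nonexpansiveness just proved, applied to $H$, gives the contraction in one line. Your route is more computational but yields the sharper two-term strongly convex co-coercivity inequality as a by-product; the paper's is a pure reduction to the convex case.

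Three small points, all handled explicitly in the paper, should be added to yours.
\begin{itemize}
\item For $H$ you only know convexity plus the upper quadratic model with constant $L-\lambda$. That is all your co-coercivity derivation uses, so state it that way rather than as a Lipschitz-gradient property.
\item The case $L=\lambda$ needs separate treatment, since $1/(L-\lambda)$ is undefined. There $H$ is affine, so $g=\lambda u$ and the bound holds with equality.
\item Say that $\norm{g}\ge\lambda\norm{u}$ follows by adding the two strong-convexity inequalities and applying Cauchy--Schwarz. Also note that $1-\eta\lambda\ge0$, which follows from $\lambda\le L$, is what lets you take square roots at the end.
\end{itemize}
The Hessian detour is unnecessary, since $F$ is only assumed differentiable.
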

\paragraph{Remark.}
Inequality~\eqref{eq:nonexpansive-main} shows that a gradient
step would not increase the distance between two points.
Under strong convexity, \eqref{eq:contraction-main0} can further give a contraction.
Both statements concern the exact gradient $\nabla F$ and therefore require no regularity of individual stochastic gradients, avoiding the requirement of average smoothness.
Once the distance between two trajectories is controlled, the inequality
\eqref{eq:functioncompare} transfers a function-gap guarantee
from one trajectory to the other.

\paragraph{Comparison with an auxiliary sequence.}
Let $\beta=1-a\in[0,1/2]$. We compare STORM with an auxiliary sequence that uses exact objective gradients
and the centered increments. This sequence is only used for analysis and requires no real computation.
\begin{lemma}
\label{lem:companion}
Under Assumptions~\ref{ass:smooth}, \ref{ass:oracle}, and
\ref{ass:convex}, let $0\le\beta\le1/2$ and choose $0<\eta_t\le1/L$.
Write $e_t=d_t-\nabla F(x_t)$ and $q_t=\E\norm{e_t}^2$.
Set $e_{-1}=0$ and $\nu_0=e_0$, and define 
\begin{equation}
 \nu_t=\zeta(x_t,\xi_t)-\beta\zeta(x_{t-1},\xi_t),
 \qquad e_t=\beta e_{t-1}+\nu_t,
 \qquad \zeta(x,\xi)=g(x,\xi)-\nabla F(x).
 \label{eq:innovation}
\end{equation}
For $\cF_t=\sigma(\xi_0,\ldots,\xi_{t-1})$, we have
\begin{equation}
 \E[\nu_t\mid\cF_t]=0,\qquad
 \E[\norm{\nu_t}^2\mid\cF_t]\le S^2:=(1+\beta)^2\sigma^2,
 \qquad q_t\le3\sigma^2.
 \label{eq:innovationbounds}
\end{equation}
The auxiliary sequence $y_0=x_0$,
\[
 y_{t+1}=y_t-\eta_t\bigl(\nabla F(y_t)+\nu_t\bigr)
\]
satisfies
\begin{equation}
 \sqrt{\E\norm{x_t-y_t}^2}
 \le\sqrt3\beta\sigma\sum_{j<t}\eta_j.
 \label{eq:convexcouple}
\end{equation}
Under Assumption~\ref{ass:strong}, this improves to
\begin{equation}
 \sqrt{\E\norm{x_t-y_t}^2}
 \le\sqrt3\beta\sigma/\lambda.
 \label{eq:sccouple}
\end{equation}
\end{lemma}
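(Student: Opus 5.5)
Let me set up notation first. STORM uses $a_{t+1}=1-\beta$, so $d_{t+1}=g(x_{t+1},\xi_{t+1})+\beta(d_t-g(x_t,\xi_{t+1}))$. Subtracting $\nabla F(x_{t+1})$ and adding and subtracting $\beta\nabla F(x_t)$ gives
\[
e_{t+1}=\beta e_t+\zeta(x_{t+1},\xi_{t+1})-\beta\zeta(x_t,\xi_{t+1}),
\]
which is the recursion \eqref{eq:innovation}. The base case is $e_0=\zeta(x_0,\xi_0)=\nu_0$.

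\textbf{Moment bounds \eqref{eq:innovationbounds}.} The points $x_t$ and $x_{t-1}$ are $\cF_t$-measurable, and $\xi_t$ is independent of $\cF_t$. Assumption~\ref{ass:oracle} then gives $\E[\nu_t\mid\cF_t]=0$. By Minkowski's inequality in conditional $L^2$,
\[
\sqrt{\E[\norm{\nu_t}^2\mid\cF_t]}\le\sigma+\beta\sigma,
\]
so $S^2=(1+\beta)^2\sigma^2$. To bound $q_t$, unroll $e_t=\sum_{j\le t}\beta^{t-j}\nu_j$. The $\nu_j$ are martingale differences, so cross terms vanish and
\[
q_t\le(1+\beta)^2\sigma^2/(1-\beta^2)=\sigma^2(1+\beta)/(1-\beta).
\]
For $\beta\le1/2$ this is at most $3\sigma^2$.

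\textbf{Coupling.} Since $d_t=\nabla F(x_t)+\beta e_{t-1}+\nu_t$, the STORM step reads
\[
x_{t+1}=x_t-\eta_t\bigl(\nabla F(x_t)+\nu_t\bigr)-\eta_t\beta e_{t-1}.
\]
The auxiliary sequence is driven by the same $\nu_t$, so the noise cancels in the difference:
\[
x_{t+1}-y_{t+1}=\bigl[x_t-\eta_t\nabla F(x_t)-y_t+\eta_t\nabla F(y_t)\bigr]-\eta_t\beta e_{t-1}.
\]
Take norms and apply \eqref{eq:nonexpansive-main} from Lemma~\ref{lem:geometry-main}, valid since $\eta_t\le1/L$. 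This gives the pathwise bound $\norm{x_{t+1}-y_{t+1}}\le\norm{x_t-y_t}+\eta_t\beta\norm{e_{t-1}}$. Minkowski in $L^2(\mathbb P)$ then yields
\[
D_{t+1}\le D_t+\eta_t\beta\sqrt{q_{t-1}}\le D_t+\sqrt3\beta\sigma\eta_t,
\qquad D_t:=\sqrt{\E\norm{x_t-y_t}^2},
\]
using $q_{-1}=0\le3\sigma^2$. With $D_0=0$ this telescopes to \eqref{eq:convexcouple}.

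\textbf{Strongly convex case.} Here \eqref{eq:contraction-main0} gives $D_{t+1}\le(1-\lambda\eta_t)D_t+\sqrt3\beta\sigma\eta_t$. We show $D_t\le\sqrt3\beta\sigma/\lambda$ by induction. The base case holds since $D_0=0$. For the step, note $1-\lambda\eta_t\ge0$ because $\lambda\le L$ and $\eta_t\le1/L$. Hence
\[
D_{t+1}\le(1-\lambda\eta_t)\sqrt3\beta\sigma/\lambda+\sqrt3\beta\sigma\eta_t=\sqrt3\beta\sigma/\lambda,
\]
which gives \eqref{eq:sccouple}.

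The only delicate point is the index bookkeeping. We must check that $\nu_t$ is a conditional martingale difference with respect to $\cF_t$. We must also check that the inherited error entering step $t$ is $\beta e_{t-1}$, not $\beta e_t$, so that the nonexpansive map is applied to exactly the deterministic-gradient part. Everything else is Minkowski's inequality and the contraction or nonexpansiveness of Lemma~\ref{lem:geometry-main}.
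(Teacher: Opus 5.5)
Your proposal is correct and follows the paper's proof essentially step for step. Both arguments use the same ingredients: the error recursion with centered innovations, the conditional Minkowski bound giving $S^2=(1+\beta)^2\sigma^2$, the geometric-series bound $q_t\le\frac{1+\beta}{1-\beta}\sigma^2\le 3\sigma^2$, the cancellation of $\nu_t$ in $x_{t+1}-y_{t+1}$ leaving $-\eta_t\beta e_{t-1}$, and then nonexpansiveness (or contraction) plus Minkowski in $L^2(\mathbb P)$ followed by telescoping (or induction); your explicit check that $1-\lambda\eta_t\ge 0$ in the induction step is a detail the paper leaves implicit.
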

\paragraph{Core of the analysis.}
To see why the comparison works, subtract the two updates:
\[
 x_{t+1}-y_{t+1}
 =\bigl(x_t-\eta_t\nabla F(x_t)\bigr)
  -\bigl(y_t-\eta_t\nabla F(y_t)\bigr)
  -\eta_t\beta e_{t-1}.
\]
The shared increment $\nu_t$ cancels.
Applying~\eqref{eq:nonexpansive-main} and then the triangle
inequality in $L^2$ gives
\[
 \sqrt{\E\norm{x_{t+1}-y_{t+1}}^2}
 \le\sqrt{\E\norm{x_t-y_t}^2}
      +\eta_t\beta\sqrt{q_{t-1}}.
\]
Since $x_0=y_0$ and $q_{t-1}\le3\sigma^2$, summation gives
\eqref{eq:convexcouple}.
Under strong convexity, inequality \eqref{eq:contraction-main0} instead
multiplies the first term on the right by
$1-\lambda\eta_t$. With
$\E\norm{e_t}^2\le3\sigma^2$, this gives
\[
 \sqrt{\E\norm{x_{t+1}-y_{t+1}}^2}
 \le
 (1-\lambda\eta_t)\sqrt{\E\norm{x_t-y_t}^2}
 +\sqrt3\,\beta\sigma\eta_t,
\]
which further yields~\eqref{eq:sccouple}.
\paragraph{Remark.}Thus the inherited estimator error affects the trajectory
comparison through the smaller quantity $\beta e_{t-1}$.
Combining~\eqref{eq:convexcouple} with
\eqref{eq:functioncompare} gives
\[
 \E[F(x_t)-F^*]
 \le2\E[F(y_t)-F^*]
   +3L\beta^2\sigma^2
      \left(\sum_{j<t}\eta_j\right)^2.
\]
Choosing $\beta=O(1/(T+1))$ makes this additional error
compatible with the desired rates.

\paragraph{Stepsize blocks.}
Note that we also want to derive the rates for the last iterate output, and an average guarantee does not by itself control $F(x_T)$.
We therefore arrange the stepsizes so that a good value in an early
block controls values in the following shorter block. The blocks
shrink geometrically toward $T$, and the stepsize is halved at each
boundary. Larger early steps reduce the initial error and smaller late
steps limit the fresh noise injected near the returned point.
For any integer $T\ge1$, define
\begin{equation}
 m=\lfloor\log_2(T+1)\rfloor,\qquad
 U_i=T+1-\left\lfloor (T+1)/2^i\right\rfloor\ (0\le i\le m),
 \qquad U_{m+1}=T+1.
 \label{eq:rounded-boundaries}
\end{equation}
Each block $\{U_i,\ldots,U_{i+1}-1\}$ is nonempty,
and the last is
$\{T\}$. For example, when $T=12$,  their lengths are $7,3,2,1$.
The construction partitions the $T+1$ output, and it does not
restart the estimator.
To see how the blocks are compared, write
$h_t=\nabla F(y_t)+\nu_t$ and fix indices $p\le q$.
Expanding $\norm{y_{t+1}-y_p}^2$, taking expectations, and then using $\inner{\nabla F(y_t)}{y_t-y_p}\ge F(y_t)-F(y_p) $
gives
\begin{equation}
 2\sum_{t=p}^{q}\eta_t\E[F(y_t)-F(y_p)]
 \le\sum_{t=p}^{q}\eta_t^2\E\norm{h_t}^2.
 \label{eq:block-comparison-main}
\end{equation}
Lemma~\ref{lem:geometry-main} is useful again here.
Conditional centering and~\eqref{eq:self-bound-main} imply
\[
 \E\norm{h_t}^2
 =\E\norm{\nabla F(y_t)}^2+\E\norm{\nu_t}^2
 \le2L\,\E[F(y_t)-F^*]+S^2.
\]
Thus the right-hand side of~\eqref{eq:block-comparison-main}
can be controlled by function errors and the noise variance.
The stepsize restriction allows the function-error terms to
be absorbed into the resulting estimates, without assuming
globally bounded stochastic gradients.

We then compare successive blocks using a reference point
from the preceding block.
Halving the stepsizes makes the accumulated comparison errors
summable, and the final singleton block gives a last-iterate
guarantee.
\begin{theorem}
\label{thm:convex}
Under Assumptions~\ref{ass:smooth}, \ref{ass:oracle}, and \ref{ass:convex}, for every integer $T\ge1$, choose
\begin{equation}
 \beta=\frac1{T+2},\qquad
 \eta_t=2^{-i}\eta\quad(U_i\le t<U_{i+1}),\qquad
 0<\eta\le\frac1{32L}.
 \label{eq:convex-schedule}
\end{equation}
Define
$\bar x_T^\eta=\sum_{t=0}^T\eta_tx_t/\sum_{t=0}^T\eta_t$. Then
\begin{equation}
 \E[F(\bar x_T^\eta)-F^*]
 \le\frac{4R^2}{\eta(T+1)}+5\eta\sigma^2.
 \label{eq:convex-weighted-main}
\end{equation}
Both $z_T=x_T$ and $z_T=(T+1)^{-1}\sum_{t=0}^{T}x_t$ obey
\begin{equation}
 \E[F(z_T)-F^*]\le\frac{8\mathrm eR^2}{\eta(T+1)}
                                      +82\mathrm e\eta\sigma^2.
 \label{eq:convex-last-main}
\end{equation}
For $R,\sigma>0$, take
$\eta=\min\{1/(32L),\,2R/(\sqrt{41}\sigma\sqrt{T+1})\}$.
All three outputs $\widehat x_T$ satisfy
\begin{equation}
 \E[F(\widehat x_T)-F^*]\le
 \mathcal{O}\left(\frac{LR^2}{T}
       +\frac{\sigma R}{\sqrt{T}}\right).
 \label{eq:convex-rate-main}
\end{equation}
If $\sigma=0$, use $\eta=1/(32L)$ to obtain
\begin{equation}
 \E[F(\widehat x_T)-F^*]\le
 \mathcal{O}\left(\frac{LR^2}{T}\right).
\end{equation}
\end{theorem}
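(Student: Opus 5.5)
The proof runs entirely through the auxiliary sequence $y_t$ of Lemma~\ref{lem:companion}. We first prove each bound for $y_t$ and then transfer it to $x_t$ with the coupling bound \eqref{eq:convexcouple} and the comparison inequality \eqref{eq:functioncompare}. The schedule \eqref{eq:convex-schedule} gives $\sum_{j\le T}\eta_j\le\sum_i 2^{-i}\eta\,(T+1)2^{-i}\cdot 2\le 4\eta(T+1)$. With $\beta=1/(T+2)$, \eqref{eq:convexcouple} then yields
\[
\E\norm{x_t-y_t}^2\le 3\beta^2\sigma^2\cdot 16\eta^2(T+1)^2\le 48\eta^2\sigma^2 .
\]
Since $\eta\le 1/(32L)$, the extra term $L\cdot48\eta^2\sigma^2$ is at most $2\eta\sigma^2$, which has the same order as the stochastic term.

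\textbf{Weighted mean.} We expand $\norm{y_{t+1}-x^*}^2$, take conditional expectations using $\E[\nu_t\mid\cF_t]=0$, and apply convexity. This gives
\[
2\eta_t\E[F(y_t)-F^*]\le \E\norm{y_t-x^*}^2-\E\norm{y_{t+1}-x^*}^2+\eta_t^2\bigl(2L\E[F(y_t)-F^*]+S^2\bigr),
\]
where the last term uses \eqref{eq:self-bound-main}. Because $\eta_t L\le1/32$, the function-gap term on the right is absorbed into the left. Summing over $t$ and using $\eta_t\le\eta$, $S^2\le\frac94\sigma^2$ and $\sum_t\eta_t\ge\eta(T+1)/2$ (the first block alone has length about $(T+1)/2$) gives
\[
\frac{\sum_t\eta_t\E[F(y_t)-F^*]}{\sum_t\eta_t}\lesssim \frac{R^2}{\eta(T+1)}+\eta\sigma^2 .
\]
To pass to $\bar x_T^\eta$, we use Jensen's inequality, $F(\bar x_T^\eta)\le\sum_t\eta_tF(x_t)/\sum_t\eta_t$. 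Each $F(x_t)$ is then bounded directly by convexity and smoothness:
\[
F(x_t)\le F(y_t)+\inner{\nabla F(y_t)}{x_t-y_t}+\tfrac L2\norm{x_t-y_t}^2 .
\]
The cross term is handled by Young's inequality together with \eqref{eq:self-bound-main}. This avoids the factor $2$ in \eqref{eq:functioncompare} and keeps the constant $4$ in \eqref{eq:convex-weighted-main}; if that fails, we use \eqref{eq:functioncompare} and accept worse constants.

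\textbf{Last iterate.} We apply the block comparison \eqref{eq:block-comparison-main}, comparing block $i+1$ with a reference point from block $i$. We will use the standard last-iterate argument of Shamir--Zhang / Orabona type, adapted to geometric blocks. Let $G_i$ be the $\eta_t$-weighted average of $\E[F(y_t)-F^*]$ over block $i$. For $p$ ranging over block $i$ and $q$ ranging to the end of block $i+1$, \eqref{eq:block-comparison-main} with the absorption above gives
\[
G_{i+1}\le G_i+C\,\frac{2^{-i}\eta\,S^2\,|B_i|}{|B_i|}\approx G_i+C\,2^{-i}\eta\sigma^2 .
\]
The step that needs care is that the reference point $y_p$ is random and must be averaged over block $i$. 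We handle this by averaging \eqref{eq:block-comparison-main} over $p$ with weights $\eta_p$, which turns $\E[F(y_p)]$ into $G_i$.

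Because the stepsize halves while the block lengths halve, the increments are geometric. The factor $\mathrm e$ in \eqref{eq:convex-last-main} should come from a product of the form $\prod_i(1+1/|B_i|)$ or $(1+2^{-i})$ generated when absorbing the $2L\eta_t^2$ terms. Summing over the $m$ blocks then gives
\[
\E[F(y_T)-F^*]\le \mathrm e\,\bigl(G_0+O(\eta\sigma^2)\bigr),
\]
with $G_0$ controlled by the weighted-mean bound. Since $F$ is convex, the uniform average $\frac1{T+1}\sum_t F(y_t)$ is a convex combination of the block averages $G_i$, each bounded by the same chain, so the uniform-average output follows in the same way. We then transfer from $y$ to $x$ as before.

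The final rates follow by plugging in the stated $\eta$ and balancing the two terms. The main obstacle is the block recursion, specifically keeping the constants geometric rather than growing like $m=\log T$. This depends on the halving schedule making $\sum_i 2^{-i}$ converge, and on correctly absorbing the $L\eta_t^2\,\E[F(y_t)-F^*]$ terms across blocks.
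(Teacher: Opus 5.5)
Your weighted-mean part is essentially the paper's argument: telescope $\E\norm{y_t-x^*}^2$, apply Jensen, then transfer to $x_t$. It closes. Young's inequality cannot remove the factor $2$ in \eqref{eq:functioncompare}; it only trades that factor against the coupling term. You do not need it, though. Your absorption with $1-L\eta_t\ge 31/32$ already gives roughly $2.1R^2/(\eta(T+1))+3.9\eta\sigma^2$, even with \eqref{eq:functioncompare} and your loose coupling $48\eta^2\sigma^2$. That coupling tightens to $3\eta^2\sigma^2$ using $\sum_{j<t}\eta_j\le T\eta$ and $\beta=1/(T+2)$.

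The gap is in the last-iterate recursion. Write $\delta_t=\E[F(y_t)-F^*]$, fix $p\in B_i$, and set $q=\max B_{i+1}$. The left side of \eqref{eq:block-comparison-main} has two pieces:
\[
 \text{target part } 2\sum_{t\in B_{i+1}}\eta_t(\delta_t-\delta_p),\qquad
 \text{source part } 2\sum_{t=p}^{\max B_i}\eta_t(\delta_t-\delta_p).
\]
Averaging over $p$ does turn the target part into $2\bigl(\sum_{t\in B_{i+1}}\eta_t\bigr)(G_{i+1}-G_i)$. The source part, however, does not disappear. With equal stepsizes $a$ on $B_i=\{1,\ldots,n\}$, its average is
\[
 \frac{2a}{n}\sum_{t=1}^{n}(2t-n-1)\,\delta_t ,
\]
which is negative whenever $\delta_t$ decreases along $B_i$. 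Moving it to the right adds a positive term to your bound on $G_{i+1}-G_i$, of the order of the drop of $\delta_t$ across $B_i$. That term has nothing to do with $2^{-i}\eta\sigma^2$, so $G_{i+1}\le G_i+C2^{-i}\eta\sigma^2$ is not derived.

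You have also misplaced the difficulty. The randomness of $y_p$ needs no averaging: for any deterministic $p$, the point $y_p$ is $\cF_t$-measurable for $t\ge p$, so the centering goes through. The Shamir--Zhang argument you cite handles the reference point differently, by placing it at the start of each suffix and recursing on suffix averages. Averaging the reference point over a block is not that argument.

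The missing idea, which is the paper's, is to take $p=\tau_i$, a minimizer over $B_i$ of the deterministic numbers $\delta_t$. Then every source difference is nonnegative and can simply be dropped. Next, split $2L\eta_t^2\delta_t=2L\eta_t^2\bigl[(\delta_t-\delta_{\tau_i})+\delta_{\tau_i}\bigr]$ to get \eqref{eq:convex-reference}. The span from $\tau_i$ to $\max B_{i+1}$ has at most $4|B_{i+1}|$ indices, with stepsizes at most $\eta2^{-i}$ and $1-L\eta 2^{-(i+1)}\ge 1/2$ on the target block. With $A_i=\min_{B_i}\delta_t\le\bar\delta_i$ this gives
\[
 A_{i+1}\le\bar\delta_{i+1}\le(1+2Lc_i)A_i+c_iS^2,\qquad c_i=8\eta2^{-i}.
\]
The factor $\mathrm e$ is $\prod_j(1+16L\eta2^{-j})\le\exp(32L\eta)$. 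The chain is anchored by the minimum over $\{M-1,\ldots,K-1\}$, which the distance telescoping controls. The resulting block-mean bounds then also give the uniform-mean output.
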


\paragraph{Remark.}
The dominant term $\sigma R/\sqrt T$ matches the SGD guarantees and the lower bound for convex objectives
\citep{agarwal2012information,stich2019unified,liu2026revisiting}. Compared with previous analysis of STORM, our rates remove the additional average smoothness assumption. Also note that we also establish a last-iterate bound in our analysis.

\subsection{Strongly Convex Objectives}
\label{sec:strong-main}
Under $\lambda$-strong convexity, the companion distance contracts.
With $\beta=1/(T+s+1)$, the function-gap comparison 
takes the form
\[
 \E[F(x_t)-F^*]
 \le
 2\E[F(y_t)-F^*]
 +\frac{3\kappa\sigma^2}{\lambda(T+s+1)^2}.
\]
Thus the error introduced by transferring the companion guarantee
back to STORM can be absorbed into the desired
$O(\sigma^2/(\lambda T))$ stochastic term.
For a carefully designed weighted mean, this permits exact
cancellation of consecutive distance terms. For the last point, we
combine this contraction with the rounded blocks in
\eqref{eq:rounded-boundaries}. The two results use different schedules.

\begin{theorem}[Weighted mean]
\label{thm:strong-mean-main}
Under Assumptions~\ref{ass:smooth}, \ref{ass:oracle}, and \ref{ass:strong}, for any $s\ge4\kappa$, use
\begin{equation}
 \beta=\frac1{T+s+1},\qquad \eta_t=\frac2{\lambda(t+s+1)},\qquad
 \bar x_T=\frac{2\sum_{t=0}^{T}(t+s)x_t}{(T+1)(T+2s)}.
 \label{eq:sc-mean-schedule}
\end{equation}
Then, we can ensure that
\begin{equation}
 \E[F(\bar x_T)-F^*]\le
 \frac{2\lambda s(s-1)R^2}{(T+1)(T+2s)}
       +\frac{19\sigma^2}{\lambda(T+s+1)}.
 \label{eq:strong-mean-main}
\end{equation}
The same bound holds for $x_I$, where $I$ has probabilities proportional to $t+s$.
\end{theorem}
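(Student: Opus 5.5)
We prove Theorem~\ref{thm:strong-mean-main} by running the weighted SGD analysis on the auxiliary sequence $y_t$ of Lemma~\ref{lem:companion}, and then transferring the result back to $x_t$ with~\eqref{eq:functioncompare} and the contraction bound~\eqref{eq:sccouple}.

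\emph{One-step inequality for $y_t$.} Expand $\norm{y_{t+1}-x^*}^2$ with $h_t=\nabla F(y_t)+\nu_t$. By conditional centering, $\E\ip{\nu_t}{y_t-x^*}=0$ and $\E\norm{h_t}^2=\E\norm{\nabla F(y_t)}^2+\E\norm{\nu_t}^2$. Strong convexity gives $\ip{\nabla F(y_t)}{y_t-x^*}\ge F(y_t)-F^*+\tfrac\lambda2\norm{y_t-x^*}^2$, and~\eqref{eq:self-bound-main} gives $\E\norm{\nabla F(y_t)}^2\le 2L\E[F(y_t)-F^*]$. Write $D_t=\E\norm{y_t-x^*}^2$ and $G_t=\E[F(y_t)-F^*]$. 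Then
\[
 D_{t+1}\le(1-\lambda\eta_t)D_t-2\eta_t(1-L\eta_t)G_t+\eta_t^2S^2 .
\]
Since $s\ge4\kappa$, we have $L\eta_t\le 2L/(\lambda s)\le1/2$, so $2\eta_t(1-L\eta_t)\ge\eta_t$. With $\eta_t=2/(\lambda(t+s+1))$ we get $1-\lambda\eta_t=(t+s-1)/(t+s+1)$. Multiply by $(t+s)(t+s+1)$ and use $\eta_t(t+s)(t+s+1)=2(t+s)/\lambda$. This yields
\[
 \tfrac2\lambda(t+s)G_t\le (t+s-1)(t+s)D_t-(t+s)(t+s+1)D_{t+1}+\tfrac{4(t+s)}{\lambda^2(t+s+1)}S^2 .
\]
The distance terms telescope exactly. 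Summing over $t=0,\dots,T$ leaves $(s-1)sR^2$ from $D_0=R^2$, plus at most $4(T+1)S^2/\lambda^2$ of noise.

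\emph{Normalizing.} Divide by $\tfrac2\lambda\sum_{t}(t+s)=\tfrac1\lambda(T+1)(T+2s)$. Jensen's inequality with weights $2(t+s)/((T+1)(T+2s))$ then bounds $\sum_t w_tG_t$ by
\[
 \frac{\lambda s(s-1)R^2}{(T+1)(T+2s)}+\frac{4S^2}{\lambda(T+2s)} .
\]
We also have $S^2=(1+\beta)^2\sigma^2\le\tfrac94\sigma^2$ and $T+2s\ge T+s+1$ (when $s\ge1$).

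\emph{Transfer to $x_t$.} By~\eqref{eq:functioncompare} and~\eqref{eq:sccouple} with $\beta=1/(T+s+1)$,
\[
 \E[F(x_t)-F^*]\le 2G_t+\frac{3L\sigma^2}{\lambda^2(T+s+1)^2}.
\]
Since $L/\lambda=\kappa\le s/4\le(T+s+1)/4$, the extra term is at most $\tfrac34\sigma^2/(\lambda(T+s+1))$. The factor $2$ turns the initialization term into $2\lambda s(s-1)R^2/((T+1)(T+2s))$, as in~\eqref{eq:strong-mean-main}. The noise constant is then $2\cdot 4\cdot \tfrac94+\tfrac34\le19$.

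\emph{Outputs.} Apply Jensen's inequality (convexity of $F$) to $\bar x_T$ with the same weights. For $x_I$ with $\Pr(I=t)\propto t+s$, the expectation equals the weighted average $\sum_t w_t\E[F(x_t)-F^*]$, so the same bound holds.

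\emph{Main obstacle.} The delicate step is constant bookkeeping. The weight $(t+s)(t+s+1)$ must make the telescoping exact, and $s\ge4\kappa$ is used twice: once to absorb $L\eta_t^2G_t$, and once to make the transfer error $O(\sigma^2/(\lambda T))$. If the constant $19$ does not close with these estimates, I would tighten the noise term to $S^2\le(1+1/(s+1))^2\sigma^2$ rather than the crude $\tfrac94\sigma^2$.
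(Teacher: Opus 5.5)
Your proposal is correct and follows the paper's proof essentially step for step: the same one-step distance inequality for the companion $y_t$, the same exact telescoping (your multiplication by $(t+s)(t+s+1)$ is the paper's division by $\eta_t$ followed by multiplication by $t+s$), and the same transfer to $x_t$ via \eqref{eq:functioncompare} and \eqref{eq:sccouple}, using $\kappa\le (T+s+1)/4$. The constants close as you computed ($18+\tfrac34\le 19$), and $s\ge4\kappa\ge4$ guarantees both $T+2s\ge T+s+1$ and $\beta\le\tfrac12$, so the fallback refinement of $S^2$ is unnecessary.
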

Next, we present the guarantee for the last iterate below.
\begin{theorem}[Last iterate]
\label{thm:strong}
Under Assumptions~\ref{ass:smooth}, \ref{ass:oracle}, and \ref{ass:strong},
let $T\ge1$ be any integer, $s\ge4\kappa^2$, and $\beta=1/(T+s+1)$.
For the rounded blocks \eqref{eq:rounded-boundaries}, use
\begin{equation}
 \eta_t=2^{-i}\frac2{\lambda(t+s+1)}
 \quad(U_i\le t<U_{i+1}).
 \label{eq:sctaper}
\end{equation}
After $T$ primal updates, the last iterate satisfies
\begin{align}
 \E[F(x_T)-F^*]
 &\le\frac{288\lambda(s+1)^2R^2}{(T+1)^2}
                           +\frac{2451\sigma^2}{\lambda(T+1)}.
 \label{eq:scrate}
\end{align}
\end{theorem}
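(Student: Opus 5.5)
The plan is to prove Theorem~\ref{thm:strong} for the companion sequence $y_t$ of Lemma~\ref{lem:companion} and then transfer the bound to $x_T$. For the transfer, note that $\eta_t\le 2/(\lambda(t+s+1))\le 1/L$ because $s\ge4\kappa^2\ge 2\kappa$. Hence \eqref{eq:sccouple} applies and gives $\E\norm{x_T-y_T}^2\le3\beta^2\sigma^2/\lambda^2$. Combining this with \eqref{eq:functioncompare} yields
\[
\E[F(x_T)-F^*]\le2\E[F(y_T)-F^*]+\frac{3L\sigma^2}{\lambda^2(T+s+1)^2}.
\]
Since $L/\lambda=\kappa\le\sqrt{s}\le T+s+1$, the extra term is at most $3\sigma^2/(\lambda(T+1))$. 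It therefore suffices to bound $\E[F(y_T)-F^*]$ by the right-hand side of \eqref{eq:scrate}, up to a factor of~2.

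\textbf{Step 1: distance bound for $y_t$.} I will derive a one-step inequality for $D_t=\E\norm{y_t-x^*}^2$. Expand $y_{t+1}-x^*$, use the conditional centering of $\nu_t$, use strong convexity $\inner{\nabla F(y)}{y-x^*}\ge F(y)-F^*+\tfrac\lambda2\norm{y-x^*}^2$, and use \eqref{eq:self-bound-main}. This gives
\[
D_{t+1}\le(1-\lambda\eta_t)D_t-2\eta_t(1-L\eta_t)\E[F(y_t)-F^*]+\eta_t^2S^2.
\]
In block $i$ we have $\lambda\eta_t=2^{1-i}/(t+s+1)$. Unrolling over block $0$ with the harmonic weights $(t+s)(t+s+1)$, exactly as in Theorem~\ref{thm:pl}, gives $D_{U_1}\lesssim s^2R^2/(T+s)^2+\sigma^2/(\lambda^2(T+s))$. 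The later blocks use smaller steps, so the contraction factor is weaker but the injected noise is also smaller. A crude bound should still keep $D_{U_i}$ of order $(s+1)^2R^2/(T+1)^2+\sigma^2/(\lambda^2(T+1))$ for all $i$, since each later block has length about $(T+1)/2^{i+1}$ and step $\eta_t\approx 2^{1-i}/(\lambda T)$.

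\textbf{Step 2: block comparison for the last iterate.} I will adapt \eqref{eq:block-comparison-main} to the strongly convex setting, keeping the $(1-\lambda\eta_t)$ contraction or simply dropping it, which is allowed since it only helps. For consecutive blocks $B_{i}$ and $B_{i+1}$, I will follow the standard argument for the last iterate of SGD: define $\Phi_i$ as the $\eta$-weighted average of $\E[F(y_t)-F^*]$ over $B_i$, and compare it to $\Phi_{i+1}$ using a random reference point drawn from $B_i$ with weights $\eta_t$. Inequality~\eqref{eq:block-comparison-main}, applied from $p\in B_i$ to the end of $B_{i+1}$, together with $\E\norm{h_t}^2\le2L\E[F(y_t)-F^*]+S^2$, should give
\[
\Phi_{i+1}\le(1+c\,L\eta)\Phi_i+O\bigl(\eta_{U_i}S^2\bigr).
\]
The $L\eta$ factors are absorbed by $s\ge4\kappa^2$. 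The base case $\Phi_0$ comes from the averaged form of Step~1, i.e., the weighted-mean argument of Theorem~\ref{thm:strong-mean-main} restricted to block $0$. Its order is $\lambda s^2R^2/T^2+\sigma^2/(\lambda T)$.

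\textbf{Step 3: summation and the main obstacle.} Because the stepsizes halve and the blocks halve, the noise contributions $\eta_{U_i}S^2\approx 2^{-i}\sigma^2/(\lambda T)$ form a geometric series. The product of the factors $(1+cL\eta_{U_i})$ is bounded by a constant. Since the final block is $\{T\}$, $\Phi_{m}=\E[F(y_T)-F^*]$.

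The main obstacle is Step~2. In \eqref{eq:block-comparison-main} the reference point $y_p$ is random, and its distance term $\E\norm{y_{U_i}-y_p}^2$ must be controlled in terms of $\Phi_i$ and the block-$i$ noise, not in terms of $R$. My first attempt will be to bound this distance by $2\E\norm{y_{U_i}-x^*}^2+2\E\norm{y_p-x^*}^2$, and then bound both pieces with Step~1 together with the quadratic growth $\norm{y-x^*}^2\le 2(F(y)-F^*)/\lambda$. Getting the constants to close at a value like $2451$ will just require careful bookkeeping.
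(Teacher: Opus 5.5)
Your reduction to the companion $y_t$ is sound and matches the paper. So are the transfer $\E[F(x_T)-F^*]\le2\E[F(y_T)-F^*]+3\sigma^2/(\lambda(T+1))$ and the distance bounds of Step~1. The gap is in Step~2, which carries the whole last-iterate argument.

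\textbf{The obstacle you name is an artifact, and your remedy fails.} In \eqref{eq:block-comparison-main} the sum starts at $t=p$, so the initial distance is $\norm{y_p-y_p}^2=0$. For $t\ge p$ the point $y_p$ is $\mathcal F_t$-measurable, and that is exactly what kills the cross term with $\nu_t$. If you start the sum at $U_i<p$, this centering breaks for $U_i\le t<p$, because $y_p$ depends on $\nu_t$. A nonzero initial distance also cannot be absorbed. The target weight $\sum_{t\in B_{i+1}}\eta_t$ is at most of order $4^{-i}/\lambda$. For the singleton $\{T\}$ it is at most $4/(\lambda(T+1)(T+s+1))$. Step~1 bounds $2D_{U_i}+2D_p$ only by order $\sigma^2/(\lambda^2T)$. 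Dividing by the target weight gives a last-block contribution of order $\sigma^2T/\lambda$ rather than $\sigma^2/(\lambda T)$. Using quadratic growth $D_p\le2\delta_p/\lambda$ instead puts a factor of order $4^{i}$ in front of $\Phi_i$. The product of these factors over $\log_2T$ blocks is superpolynomial.

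\textbf{The real difficulty is the sign of the source-block terms.} Unless the reference index minimizes $\delta_t=\E[F(y_t)-F^*]$ on the source block, the terms $\eta_t(\delta_t-\delta_p)$ for $p\le t\le\max B_i$ have no sign. Averaging over an $\eta$-weighted random $p$ leaves a remainder comparable to $(\sum_{t\in B_i}\eta_t)\Phi_i$. The target weight is several times smaller than $\sum_{t\in B_i}\eta_t$. Without further structure you therefore only get $\Phi_{i+1}\le(1+\Theta(1))\Phi_i+\cdots$, which compounds to a factor $T^{\Theta(1)}$.

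\textbf{The missing idea is the paper's minimizer reference.}
Take $p=\tau_i$, a minimizer of $\delta_t$ over $B_i$. It is deterministic, because it minimizes expectations. Run the comparison from $\tau_i$ to $\max B_{i+1}$, so the initial distance is zero. Drop the source part, which is nonnegative by minimality. Bound each target term below by $\delta_{\tau_{i+1}}-\delta_{\tau_i}$, and telescope these differences down to $\tau_m=T$.

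The paper additionally bounds $\E\norm{h_t}^2\le L^2D_t+S^2\le G^2$ for $t\ge M-1$. This is where $s\ge4\kappa^2$ enters, and it makes the recursion purely additive. Your $2L\delta_t+S^2$ variant also works once the minimizer trick is in place, as in Lemma~\ref{lem:convex-blocks}.

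\textbf{The base case must use a window.} The first source block must be $\{M-1,\ldots,K-1\}$, not the whole first block $\{0,\ldots,U_1-1\}$. On this window $D_{M-1}$ has already decayed, and every stepsize in the first comparison span is within a constant factor of $\gamma_T$. An $\eta$-weighted average over all of $\{0,\ldots,K-1\}$ gives $\delta_0\ge\lambda R^2/2$ a weight of about $1/((s+1)\log(T/s))$. Its initialization term therefore decays only logarithmically. The $(t+s)$-weighted bound of Theorem~\ref{thm:strong-mean-main} controls a different average, so it does not supply your $\Phi_0$.
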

\paragraph{Remark.}
The dominant term $O(\sigma^2/(\lambda T))$ of the last iterate bound has the same order as the mean, which is optimal for strongly convex functions
\citep{agarwal2012information}. 
Compared with previous results, we remove the average smoothness assumption and establish a last-iterate bound.

\section{Numerical Experiments}
\label{sec:experiments}
We study three questions suggested by the theory: how the objective
geometry affects convergence, whether STORM converges when average
smoothness fails, and how the prescribed schedules behave for averaged
and last-iterate outputs. Our contribution is an analysis of the original
STORM algorithm and its empirical performance has already been studied
extensively in previous literature,
so we focus on experiments that isolate these questions
instead of repeating a broad benchmark evaluation. 

\paragraph{Experimental setup.}
We use the following four objectives with known minima and exact
gradients. An orthogonal change of coordinates $z=Qx$ gives
\begin{align*}
 F_{\rm nc}(x)&=\sum_{i=1}^{10}\left(1-\cos z_i+0.05z_i^2\right),&
 F_{\rm PL}(x)&=\frac12\sum_{i=1}^{10}(z_i+2\tanh z_i)^2,\\
 F_{\rm cv}(x)&=\frac12\sum_{i=1}^{10} \left(z_i^2-\log(1+z_i^2)\right),&
 F_{\rm sc}(x)&=\frac12\sum_{i=1}^{10}h_i z_i^2,
\end{align*}
where $h_i$ are equally spaced in $[1,4]$. 
For the average-smooth, we add independent standard Gaussian
noise to the coordinates of the gradient. For the experiments without
average smoothness, we use
\begin{equation}
 g(x;J,\phi)=\nabla F(x)+Q^\top\zeta(Qx;J,\phi),\qquad
 \zeta_i(z;J,\phi)=\sqrt2\sin(2^{J_i}z_i+\phi_i),
 \label{eq:roughoracle}
\end{equation}
where $\Pr(J_i=j)=2^{-j-1}$ and $\phi_i$ is uniform on $[0,2\pi]$. The oracle is unbiased and has
$\sigma^2=10$, but admits no finite average-smoothness constant.
We run $256$ independent trials.
All parameters are chosen from the stated analytical constants, without
tuning hyperparameters.

\paragraph{Nonconvex and PL objectives under average smoothness.}
For $F_{\rm nc}$, we use the exact schedule in
Theorem~\ref{thm:nonconvex} and measure
$T^{-1}\sum_{t=0}^{T-1}\|\nabla F(x_t)\|^2$, whose
reference order is $T^{-2/3}$. For $F_{\rm PL}$, we compare the
harmonic and two-phase schedules of Theorem~\ref{thm:pl} and report $F(x_T)-F^*$.
\begin{figure}[t]
\centering
\includegraphics[width=\textwidth]{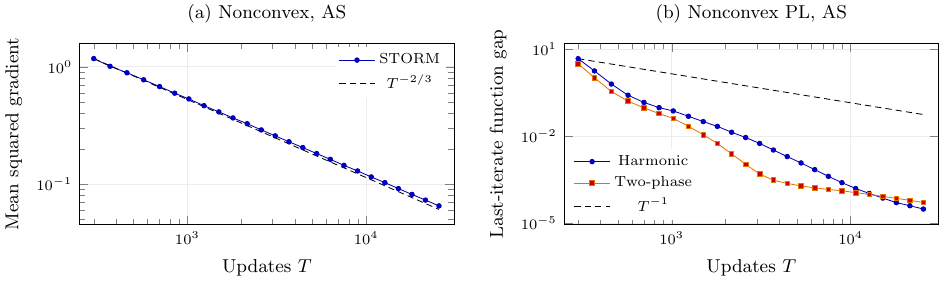}
\caption{Convergence under average smoothness (log--log scales).}
\label{fig:diagnostics}
\end{figure}
Figure~\ref{fig:diagnostics}(a) is roughly consistent with the
$T^{-2/3}$ reference. On the PL problem, the two-phase schedule gives a smaller
gap at smaller budgets, whereas the harmonic schedule
becomes better at the larger budgets.

\paragraph{Nonconvex objectives without average smoothness.}
For $F_{\rm nc}$, we replace the Gaussian oracle
by~\eqref{eq:roughoracle}. We test $\beta\in\{0.1,0.3,0.5\}$ with
the common stepsize prescribed in
Theorem~\ref{thm:population-nonconvex}. 
SGD uses the same stepsize as a reference. We measure the
average squared gradient. Here, the theoretical stochastic
order is $T^{-1/2}$, corresponding to $T^{-1/4}$ in gradient norm.
In Figure~\ref{fig:standard-diagnostics}(a), increasing $T$ by a
factor of $85$ reduces the squared-gradient criterion by about
$9.1$ for each tested correction coefficient, close to
$\sqrt{85}\approx9.2$. The three fixed-$\beta$ curves and SGD
are nearly indistinguishable at this scale.

\paragraph{Averaged and last iterates without average smoothness.}
For $F_{\rm cv}$, we use the rounded block schedule in
Theorem~\ref{thm:convex} and evaluate the last iterate, the uniform
mean, and the stepsize-weighted mean of the same trajectory.
For $F_{\rm sc}$, we run the two schedules in
Theorems~\ref{thm:strong-mean-main} and~\ref{thm:strong}
separately.
\begin{figure}[t]
\centering
\includegraphics[width=\textwidth]{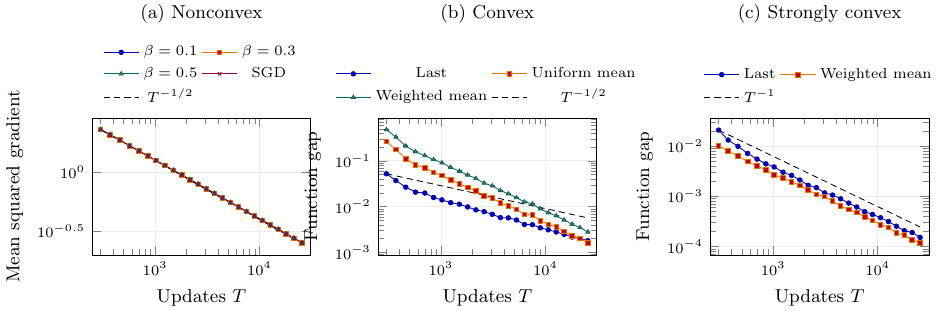}
\caption{Convergence without average smoothness (log--log scales).}
\label{fig:standard-diagnostics}
\end{figure}
In Figure~\ref{fig:standard-diagnostics}(b), the uniform mean
is slightly better than the last iterate at the largest
tested budget.
For strong convexity, both curves in
Figure~\ref{fig:standard-diagnostics}(c) follow the $T^{-1}$
reference closely at larger budgets.

Also note that our theoretical guarantees are worst-case upper bounds. A particular objective can converge faster than the worst-case rate in practice.

\section{Conclusion}

In this paper, we studied the convergence of STORM under different
objective geometries and smoothness assumptions.
Under average smoothness, we provided a direct analysis of the
logarithm-free nonconvex rate  and established a last-iterate
guarantee with an optimal stochastic term under the PL condition.
Under standard smoothness alone, STORM matches the nonconvex
SGD rate and attains optimal stochastic orders for both averaged
and last iterates in the convex and strongly convex settings.
These guarantees follow from different parameter choices within
the same STORM recursion, with a single initial sample and a
constant number of gradient evaluations per iteration.
\label{maintextend}

\bibliography{references}
\bibliographystyle{iclr2027_conference}
\clearpage
\appendix
\section{Proof of Theorem~\ref{thm:nonconvex}}
\label{app:nonconvex}
We prove Theorem~\ref{thm:nonconvex} by combining the descent
inequality with a bound on the gradient estimation error.
Let $\cF_t=\sigma(\xi_0,\ldots,\xi_{t-1})$. Define
\[
 e_t=d_t-\nabla F(x_t),\qquad q_t=\E\norm{e_t}^2,
 \qquad \Delta_t=\E[F(x_t)-F^*].
\]
Thus $\Delta_0=F(x_0)-F^*$ and $q_0\le\sigma^2$.

\begin{lemma}
\label{lem:tracking}
Under Assumptions~\ref{ass:smooth} and~\ref{ass:oracle},
for every deterministic $\eta_t\ge0$, the primal update satisfies
\begin{align}
 \Delta_{t+1}-\Delta_t
 \le{}&-\frac{\eta_t}{2}\E\norm{\nabla F(x_t)}^2
       -\frac{\eta_t(1-L\eta_t)}{2}\E\norm{d_t}^2
       +\frac{\eta_t}{2}q_t.
 \label{eq:descent}
\end{align}
If Assumption~\ref{ass:average} also holds and
$0\le a_{t+1}\le1$, then
\begin{equation}
 q_{t+1}\le(1-a_{t+1})^2q_t+2a_{t+1}^2\sigma^2
                       +2\Ls^2\eta_t^2\E\norm{d_t}^2.
 \label{eq:tracking}
\end{equation}
\end{lemma}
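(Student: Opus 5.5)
Both inequalities follow from one-step expansions; the only real work is in the tracking bound.

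\emph{Descent inequality.} Apply the $L$-smoothness upper bound to the step $x_{t+1}=x_t-\eta_t d_t$:
\[
F(x_{t+1})\le F(x_t)-\eta_t\inner{\nabla F(x_t)}{d_t}+\tfrac{L\eta_t^2}{2}\norm{d_t}^2 .
\]
Rewrite the cross term with the polarization identity
\[
-\inner{\nabla F(x_t)}{d_t}=\tfrac12\norm{\nabla F(x_t)-d_t}^2-\tfrac12\norm{\nabla F(x_t)}^2-\tfrac12\norm{d_t}^2 .
\]
Multiply this identity by $\eta_t$ and group the $\norm{d_t}^2$ terms into $-\tfrac{\eta_t(1-L\eta_t)}{2}\norm{d_t}^2$. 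Since $\eta_t$ is deterministic, taking full expectations and subtracting $F^*$ gives \eqref{eq:descent} directly. No unbiasedness is needed here, because the error $e_t$ enters only through $\norm{e_t}^2$.

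\emph{Tracking inequality.} Write out the estimator error. From \eqref{eq:estimator},
\[
e_{t+1}=(1-a_{t+1})e_t+\underbrace{a_{t+1}\zeta(x_{t+1},\xi_{t+1})}_{A}+\underbrace{(1-a_{t+1})\bigl(\zeta(x_{t+1},\xi_{t+1})-\zeta(x_t,\xi_{t+1})\bigr)}_{B},
\]
where $\zeta(x,\xi)=g(x,\xi)-\nabla F(x)$. The first term is measurable with respect to $\cF_{t+1}=\sigma(\xi_0,\dots,\xi_t)$. This $\sigma$-field also contains $x_{t+1}$, because $x_{t+1}$ depends on $d_t$ and hence only on $\xi_0,\dots,\xi_t$. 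The fresh sample $\xi_{t+1}$ is independent of $\cF_{t+1}$, so $\E[A+B\mid\cF_{t+1}]=0$ by unbiasedness. The cross term therefore vanishes, and
\[
q_{t+1}=(1-a_{t+1})^2q_t+\E\norm{A+B}^2 .
\]
Then apply $\norm{A+B}^2\le 2\norm{A}^2+2\norm{B}^2$:
\begin{itemize}
\item $\E\norm{A}^2\le a_{t+1}^2\sigma^2$ by bounded variance.
\item For $\E\norm{B}^2$, use $(1-a_{t+1})^2\le1$ and the fact that the centered difference has second moment at most the uncentered one: $\E\norm{X-\E X}^2\le\E\norm{X}^2$, applied conditionally with $X=g(x_{t+1},\xi_{t+1})-g(x_t,\xi_{t+1})$. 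Average smoothness then gives $\E\norm{B}^2\le\Ls^2\E\norm{x_{t+1}-x_t}^2=\Ls^2\eta_t^2\E\norm{d_t}^2$.
\end{itemize}
Together these yield \eqref{eq:tracking}.

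The main obstacle is the measurability bookkeeping, i.e.\ showing that the cross term between $(1-a_{t+1})e_t$ and $A+B$ vanishes. This requires $x_{t+1}$ and $e_t$ to be determined before $\xi_{t+1}$ is drawn. Average smoothness is then applied conditionally with the query points fixed, and the tower property finishes the argument.
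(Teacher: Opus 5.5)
Your proposal is correct and follows essentially the same route as the paper. The descent bound comes from the smoothness upper model plus the polarization identity for $\norm{e_t}^2$. The tracking bound uses the same decomposition of $e_{t+1}$ into $(1-a_{t+1})e_t$ plus two fresh-sample terms centered given $\cF_{t+1}$, followed by $\norm{u+v}^2\le2\norm{u}^2+2\norm{v}^2$, the centering inequality, and average smoothness; the only cosmetic difference is that you drop $(1-a_{t+1})^2\le1$ before applying average smoothness rather than at the end.
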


\begin{proof} Smoothness and $x_{t+1}=x_t-\eta_td_t$ give
\[
 F(x_{t+1})\le F(x_t)
 -\eta_t\ip{\nabla F(x_t)}{d_t}
 +\frac{L\eta_t^2}{2}\norm{d_t}^2.
\]
By the definition of $e_t$,
\[
 \norm{e_t}^2
 =\norm{d_t}^2+\norm{\nabla F(x_t)}^2
                      -2\ip{\nabla F(x_t)}{d_t}.
\]
Solving this identity for the inner product and substituting yields
\begin{align*}
 F(x_{t+1})-F(x_t)
 &\le-\frac{\eta_t}{2}\norm{\nabla F(x_t)}^2
      -\frac{\eta_t}{2}\norm{d_t}^2
      +\frac{\eta_t}{2}\norm{e_t}^2
      +\frac{L\eta_t^2}{2}\norm{d_t}^2\\
 &=-\frac{\eta_t}{2}\norm{\nabla F(x_t)}^2
      -\frac{\eta_t(1-L\eta_t)}{2}\norm{d_t}^2
      +\frac{\eta_t}{2}\norm{e_t}^2.
\end{align*}
Taking expectations proves \eqref{eq:descent}.

To prove the error recursion, abbreviate $a=a_{t+1}$ and write
$\zeta(x,\xi)=g(x,\xi)-\nabla F(x)$.
Subtracting $\nabla F(x_{t+1})$ from the estimator update gives the
exact identity
\begin{align*}
 e_{t+1}
 &=(1-a)e_t+\zeta(x_{t+1},\xi_{t+1})
                         -(1-a)\zeta(x_t,\xi_{t+1})\\
 &=(1-a)e_t+a\zeta(x_{t+1},\xi_{t+1})+(1-a)
       \bigl(\zeta(x_{t+1},\xi_{t+1})-\zeta(x_t,\xi_{t+1})\bigr).
\end{align*}
Conditional on $\cF_{t+1}$, the fresh-sample terms together have mean
zero, so their cross term with the fixed $e_t$ vanishes. We then use
$\norm{u+v}^2\le2\norm{u}^2+2\norm{v}^2$ to obtain
\begin{align*}
 \E[\norm{e_{t+1}}^2\mid\cF_{t+1}]
 &\le(1-a)^2\norm{e_t}^2
       +2a^2\E[\norm{\zeta(x_{t+1},\xi_{t+1})}^2\mid\cF_{t+1}]\\
 &\quad+2(1-a)^2\E\bigl[
       \norm{\zeta(x_{t+1},\xi_{t+1})-\zeta(x_t,\xi_{t+1})}^2
                           \mid\cF_{t+1}\bigr].
\end{align*}
The first conditional noise moment is at most $\sigma^2$ by
\eqref{eq:bv} and fresh-sample independence. For the second one,
centering a random vector subtracts the squared norm of its mean:
\begin{align*}
 &\E\bigl[
    \norm{\zeta(x_{t+1},\xi_{t+1})-\zeta(x_t,\xi_{t+1})}^2
                       \mid\cF_{t+1}\bigr]\\
 &\quad=\E\bigl[
    \norm{g(x_{t+1},\xi_{t+1})-g(x_t,\xi_{t+1})}^2
                       \mid\cF_{t+1}\bigr]
          -\norm{\nabla F(x_{t+1})-\nabla F(x_t)}^2\\
 &\quad\le\Ls^2\norm{x_{t+1}-x_t}^2
       =\Ls^2\eta_t^2\norm{d_t}^2.
\end{align*}
The inequality uses \eqref{eq:mss}. Substitution gives
\[
 \E[\norm{e_{t+1}}^2\mid\cF_{t+1}]
 \le(1-a)^2\norm{e_t}^2+2a^2\sigma^2
                  +2(1-a)^2\Ls^2\eta_t^2\norm{d_t}^2.
\]
Finally, $(1-a)^2\le1$ gives
\eqref{eq:tracking}.
\end{proof}

\begin{lemma}
\label{lem:certificate}
Under Assumptions~\ref{ass:smooth}, \ref{ass:oracle}, and
\ref{ass:average}, let
$\eta_t=\eta>0$ and $L\eta\le1/2$.
Choose an auxiliary weight $a_0>0$ and deterministic coefficients satisfying
\[
 0<a_{t+1}\le\frac12,\qquad
 a_{t+1}\ge4\Ls^2\eta^2,\qquad
 a_{t+1}^{-1}-a_t^{-1}\le\frac12.
\]
With
\[
 \Psi_t=\Delta_t+\frac{\eta}{2a_t}q_t,
\]
one has
\begin{equation}
 \Psi_{t+1}\le\Psi_t
       -\frac\eta2\E\norm{\nabla F(x_t)}^2
       +\eta a_{t+1}\sigma^2.
 \label{eq:certificate}
\end{equation}
\end{lemma}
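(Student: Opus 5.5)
We combine the two inequalities of Lemma~\ref{lem:tracking}. The descent inequality \eqref{eq:descent} controls $\Delta_{t+1}-\Delta_t$, and the tracking inequality \eqref{eq:tracking}, scaled by $\eta/(2a_{t+1})$, controls the change in the error part of $\Psi$. Concretely, we write
\[
 \Psi_{t+1}-\Psi_t=(\Delta_{t+1}-\Delta_t)+\frac{\eta}{2a_{t+1}}q_{t+1}-\frac{\eta}{2a_t}q_t.
\]
In \eqref{eq:descent} we use $L\eta\le1/2$, so the $\E\norm{d_t}^2$ coefficient is at most $-\eta/4$. In \eqref{eq:tracking} we bound $q_{t+1}$ and multiply by $\eta/(2a_{t+1})$; this gives three contributions, which we handle in turn.

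\emph{Noise term.} It is $\frac{\eta}{2a_{t+1}}\cdot2a_{t+1}^2\sigma^2=\eta a_{t+1}\sigma^2$, which is exactly the additive term in \eqref{eq:certificate}.

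\emph{Movement term.} It is $\frac{\eta}{2a_{t+1}}\cdot2\Ls^2\eta^2\E\norm{d_t}^2=\frac{\Ls^2\eta^3}{a_{t+1}}\E\norm{d_t}^2$. The condition $a_{t+1}\ge4\Ls^2\eta^2$ bounds it by $\frac\eta4\E\norm{d_t}^2$, so it cancels against the $-\frac\eta4\E\norm{d_t}^2$ from the descent step.

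\emph{Error term $q_t$.} Collecting everything multiplying $q_t$ gives
\[
 \frac\eta2\Bigl(1+\frac{(1-a_{t+1})^2}{a_{t+1}}-\frac1{a_t}\Bigr).
\]
We need this to be nonpositive. Expanding,
\[
 \frac{(1-a)^2}{a}=\frac1a-2+a.
\]
So the bracket equals $\frac1{a_{t+1}}-\frac1{a_t}-1+a_{t+1}$. The two hypotheses $a_{t+1}^{-1}-a_t^{-1}\le\frac12$ and $a_{t+1}\le\frac12$ make this at most $\frac12-1+\frac12=0$. Since $q_t\ge0$, the $q_t$ term can be dropped.

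What remains is $-\frac\eta2\E\norm{\nabla F(x_t)}^2+\eta a_{t+1}\sigma^2$, which proves \eqref{eq:certificate}.

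The only delicate point is the $q_t$ bookkeeping. The step $(1-a)^2/a=1/a-2+a$ is what lets the increase of $1/a$ be absorbed; the cruder bound $(1-a)^2\le1-a$ would also work, giving $1/a_{t+1}-1/a_t\le0$ only up to a slack of $1/2$ from $-1+a_{t+1}$. Either way, the stated conditions appear to be calibrated exactly for this cancellation. At $t=0$ the auxiliary weight $a_0$ only enters through $\Psi_0$, so no extra argument is needed there.
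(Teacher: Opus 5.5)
Your proof is correct and is essentially the paper's argument. You add $\eta/(2a_{t+1})$ times \eqref{eq:tracking} to \eqref{eq:descent}, use $L\eta\le1/2$ and $a_{t+1}\ge4\Ls^2\eta^2$ to cancel the $\E\norm{d_t}^2$ terms, and use the exact expansion $(1-a)^2/a=1/a-2+a$ to make the $q_t$ coefficient $\frac\eta2(a_{t+1}^{-1}-a_t^{-1}-1+a_{t+1})\le0$.

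One correction to your closing aside: the cruder bound $(1-a)^2\le1-a$ would \emph{not} work. It leaves the bracket equal to $a_{t+1}^{-1}-a_t^{-1}$, which the hypotheses only bound by $1/2$, not by $0$. This quantity is strictly positive for the schedule $a_t=(t+2)^{-2/3}$ used later, so the exact expansion, with its $-1+a_{t+1}\le-1/2$ slack, is genuinely needed.
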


\begin{proof}
Multiply \eqref{eq:tracking} by $\eta/(2a_{t+1})$ and add it to
\eqref{eq:descent}. This gives
\begin{align*}
 \Psi_{t+1}
 &\le\Delta_t-\frac\eta2\E\norm{\nabla F(x_t)}^2+\left(\frac\eta2+
             \frac{\eta(1-a_{t+1})^2}{2a_{t+1}}\right)q_t\\
 &\quad+\left(-\frac{\eta(1-L\eta)}2
                   +\frac{\Ls^2\eta^3}{a_{t+1}}\right)
                                      \E\norm{d_t}^2
        +\eta a_{t+1}\sigma^2.
\end{align*}
Subtract $\Psi_t=\Delta_t+\eta q_t/(2a_t)$.
The remaining coefficient of $q_t$ is
\begin{align*}
 \frac\eta2+\frac{\eta(1-a_{t+1})^2}{2a_{t+1}}
                     -\frac{\eta}{2a_t}
 &=\frac\eta2\left(
      1+a_{t+1}^{-1}-2+a_{t+1}-a_t^{-1}\right)\\
 &=\frac\eta2\left(
      a_{t+1}^{-1}-a_t^{-1}-1+a_{t+1}\right)\\
 &\le\frac\eta2\left(\frac12-1+\frac12\right)=0.
\end{align*}
The coefficient of $\E\norm{d_t}^2$ is also nonpositive:
\[
 -\frac{\eta(1-L\eta)}2+\frac{\Ls^2\eta^3}{a_{t+1}}
 \le-\frac\eta4+\frac{\Ls^2\eta^3}{4\Ls^2\eta^2}=0.
\]
Dropping these two nonpositive terms proves \eqref{eq:certificate}.
\end{proof}

\begin{proof}[Proof of Theorem~\ref{thm:nonconvex}]
Fix an arbitrary integer $T\ge1$ and use:
\[
 \eta=\frac1{2\Ls(T+2)^{1/3}},\qquad
 a_t=(t+2)^{-2/3}.
\]
We first verify the conditions of Lemma~\ref{lem:certificate} for
$t=0,\ldots,T-1$. Since $L\le\Ls$,
\[
 L\eta\le\frac1{2(T+2)^{1/3}}\le\frac12.
\]
The mixing sequence decreases, and hence
\[
 a_{t+1}\le a_1=3^{-2/3}<\frac12,
 \qquad
 a_{t+1}\ge a_T=(T+2)^{-2/3}=4\Ls^2\eta^2.
\]
Because $u\mapsto u^{2/3}$ is concave, its unit increments
decrease as $u$ increases. Therefore
\[
 a_{t+1}^{-1}-a_t^{-1}
 =(t+3)^{2/3}-(t+2)^{2/3}
 \le3^{2/3}-2^{2/3}<\frac12.
\]
Thus, all conditions of Lemma~\ref{lem:certificate} hold.
Rearrange \eqref{eq:certificate} and sum over the $T$ steps:
\begin{align*}
 \frac\eta2\sum_{t=0}^{T-1}\E\norm{\nabla F(x_t)}^2
 &\le\sum_{t=0}^{T-1}(\Psi_t-\Psi_{t+1})
                        +\eta\sigma^2\sum_{t=0}^{T-1}a_{t+1}\\
 &=\Psi_0-\Psi_T+\eta\sigma^2\sum_{t=0}^{T-1}a_{t+1}\\
 &\le\Delta_0+\frac{\eta\sigma^2}{2a_0}
                        +\eta\sigma^2\sum_{t=0}^{T-1}a_{t+1}.
\end{align*}
Here $\Psi_T\ge0$ because $F(x_T)\ge F^*$ and $q_T\ge0$,
and the initial variance bound gives $q_0\le\sigma^2$.
Dividing by $\eta T/2$ yields
\[
 \frac1T\sum_{t=0}^{T-1}\E\norm{\nabla F(x_t)}^2
 \le\frac{2\Delta_0}{\eta T}
        +\frac{\sigma^2}{a_0T}
        +\frac{2\sigma^2}{T}\sum_{t=0}^{T-1}(t+3)^{-2/3}.
\]

Since $u^{-2/3}$ is decreasing, integrating over each interval
$[t+2,t+3]$ gives
\begin{align*}
 \sum_{t=0}^{T-1}(t+3)^{-2/3}
 &\le\sum_{t=0}^{T-1}\int_{t+2}^{t+3}u^{-2/3}\,du\\
 &=\int_2^{T+2}u^{-2/3}\,du\\
 &=3\bigl((T+2)^{1/3}-2^{1/3}\bigr).
\end{align*}
Substitute this bound, $\eta^{-1}=2\Ls(T+2)^{1/3}$, and
$a_0^{-1}=2^{2/3}$ to obtain
\begin{align*}
 \frac1T\sum_{t=0}^{T-1}\E\norm{\nabla F(x_t)}^2
 \le{}&\frac{4\Ls\Delta_0(T+2)^{1/3}}T
       +\frac{2^{2/3}\sigma^2}T+\frac{6\sigma^2}{T}
                      \bigl((T+2)^{1/3}-2^{1/3}\bigr),
\end{align*}
which is \eqref{eq:ncbound}.

Finally, let $I$ be uniform on $\{0,\ldots,T-1\}$ and independent of
the trajectory. Then
\[
 \E\norm{\nabla F(x_I)}^2
 =\frac1T\sum_{t=0}^{T-1}\E\norm{\nabla F(x_t)}^2.
\]
Cauchy--Schwarz gives
$\E\norm{\nabla F(x_I)}\le
(\E\norm{\nabla F(x_I)}^2)^{1/2}$.
Since $T+2\le3T$ for $T\ge1$, the displayed second-moment
bound is $O((\Ls\Delta_0+\sigma^2)T^{-2/3})$, proving the
stated expected-norm rate.
\end{proof}

\emph{The noiseless case.}

If $\sigma=0$, take $\eta=1/(4\Ls)$ and $a_t=1/4$ for every
$t\ge0$, including the potential weight $a_0$.
Then $L\eta\le1/4$, $a_{t+1}=4\Ls^2\eta^2=1/4$, and
$a_{t+1}^{-1}-a_t^{-1}=0$, so the certificate applies.
Since $q_0=0$ and the noise term vanishes, summing it gives
\[
 \frac1T\sum_{t=0}^{T-1}\E\norm{\nabla F(x_t)}^2
 \le\frac{2\Delta_0}{\eta T}=\frac{8\Ls\Delta_0}{T}.
\]

\section{Proof of Theorem~\ref{thm:pl}}
\label{app:pl}
\begin{proof}
Retain the notation
$e_t=d_t-\nabla F(x_t)$, $q_t=\E\norm{e_t}^2$, and
$\Delta_t=\E[F(x_t)-F^*]$.
The potential in this proof has a constant estimator-error weight:
\[
 V_t=\Delta_t+\frac{q_t}{\mu}.
\]
Also note that $\Delta_t\le V_t$ and
$V_0\le\Delta_0+\sigma^2/\mu$.

First, consider any one step satisfying
\[
 a_{t+1}=\mu\eta_t,\qquad
 0<\mu\eta_t\le\frac12,\qquad
 \eta_t\le\min\left\{\frac1{2L},\frac{\mu}{8\Ls^2}\right\}.
\]
Add \eqref{eq:tracking} divided by $\mu$ to
\eqref{eq:descent}. The resulting inequality is
\begin{align*}
 V_{t+1}
 &\le\Delta_t-\frac{\eta_t}{2}\E\norm{\nabla F(x_t)}^2+\left(\frac{(1-\mu\eta_t)^2}{\mu}
                          +\frac{\eta_t}{2}\right)q_t\\
 &\quad+\left(-\frac{\eta_t(1-L\eta_t)}2
                          +\frac{2\Ls^2\eta_t^2}{\mu}\right)
                                      \E\norm{d_t}^2
       +2\mu\eta_t^2\sigma^2.
\end{align*}
By the PL inequality,
\[
 \Delta_t-\frac{\eta_t}{2}\E\norm{\nabla F(x_t)}^2
 \le(1-\mu\eta_t)\Delta_t.
\]
For the coefficient of $q_t/\mu$, direct expansion gives
\begin{align*}
 (1-\mu\eta_t)^2+\frac{\mu\eta_t}{2}
 &=1-\frac32\mu\eta_t+(\mu\eta_t)^2\\
 &=1-\mu\eta_t+
                 \mu\eta_t\left(\mu\eta_t-\frac12\right)\\
 &\le1-\mu\eta_t.
\end{align*}
Finally, $L\eta_t\le1/2$ and
$\eta_t\le\mu/(8\Ls^2)$ imply
\[
 -\frac{\eta_t(1-L\eta_t)}2
                          +\frac{2\Ls^2\eta_t^2}{\mu}
 \le-\frac{\eta_t}{4}
        +\eta_t\frac{2\Ls^2}{\mu}\frac{\mu}{8\Ls^2}=0.
\]
Since $\E\norm{d_t}^2\ge0$, that term can be discarded.
Combining the other two coefficients gives
\begin{equation}
 V_{t+1}\le(1-\mu\eta_t)V_t+2\mu\eta_t^2\sigma^2.
 \label{eq:plscalar-app}
\end{equation}

For either schedule in the theorem, the largest stepsize is
$2/(\mu s)$. Since $s\ge16(\Ls/\mu)^2$,
\[
 \frac2{\mu s}\le\frac{\mu}{8\Ls^2},\qquad
 \frac2s\le\frac{\mu^2}{8\Ls^2}\le\frac18,
 \qquad
 L\frac2{\mu s}\le\frac{L\mu}{8\Ls^2}\le\frac18.
\]
Thus \eqref{eq:plscalar-app} holds at every step of either
schedule. In particular, $s\ge16$, so the scalar factors used
below are positive.

\emph{The first schedule.}
Substituting $\eta_t=2/[\mu(t+s)]$ into \eqref{eq:plscalar-app} gives
\begin{equation}
 V_{t+1}\le\frac{t+s-2}{t+s}V_t
                   +\frac{8\sigma^2}{\mu(t+s)^2}.
 \label{eq:pl-reciprocal-recursion}
\end{equation}
The first two substitutions are
\begin{align*}
 V_1&\le\frac{s-2}{s}V_0+\frac{8\sigma^2}{\mu s^2},\\
 V_2&\le\frac{s-1}{s+1}\frac{s-2}{s}V_0
       +\frac{8\sigma^2}{\mu}
           \left(\frac{s-1}{(s+1)s^2}+\frac1{(s+1)^2}\right).
\end{align*}
Induction therefore gives
\begin{equation}\label{eq:pl-unrolled}
 V_T\le V_0\prod_{k=0}^{T-1}\frac{k+s-2}{k+s}
       +\frac{8\sigma^2}{\mu}\sum_{j=0}^{T-1}\frac1{(j+s)^2}
                 \prod_{k=j+1}^{T-1}\frac{k+s-2}{k+s}.
\end{equation}
For every $0\le u\le T$,
\begin{equation}\label{eq:pl-product}
 \prod_{k=u}^{T-1}\frac{k+s-2}{k+s}
 =\frac{(u+s-2)(u+s-1)}{(T+s-2)(T+s-1)}.
\end{equation}
Taking $u=0$ gives the initialization product.
Taking $u=j+1$ in each noise term gives
\begin{align*}
 \frac1{(j+s)^2}
                 \prod_{k=j+1}^{T-1}\frac{k+s-2}{k+s}
 &=\frac1{(j+s)^2}
      \frac{(j+s-1)(j+s)}{(T+s-2)(T+s-1)}\\
 &=\frac{(j+s-1)/(j+s)}{(T+s-2)(T+s-1)}.
\end{align*}
Consequently, \eqref{eq:pl-unrolled} becomes the explicit bound
\begin{equation}
 V_T\le
 \frac{(s-2)(s-1)V_0+
       \dfrac{8\sigma^2}{\mu}
                    \sum_{j=0}^{T-1}\dfrac{j+s-1}{j+s}}
      {(T+s-2)(T+s-1)}.
 \label{eq:pl-exact-scalar-bound}
\end{equation}

We now bound the initialization and noise terms separately.

For $r=1,2$, positivity of the denominators and $rT\ge0$ imply
\[
 \frac{s-r}{T+s-r}\le\frac{s}{T+s}.
\]
Indeed, after cross multiplication this is precisely
$(s-r)(T+s)\le s(T+s-r)$.
Multiplying the two inequalities gives
\[
 \frac{(s-2)(s-1)}{(T+s-2)(T+s-1)}
 \le\left(\frac{s}{T+s}\right)^2.
\]
Every summand $(j+s-1)/(j+s)$ in the noise numerator is at most
one. Furthermore,
\begin{align*}
 (T+s-2)(T+s-1)-T(T+s)=(s-3)(T+s)+2\ge0,
\end{align*}
because $s\ge16$. Hence
\[
 \frac{\sum_{j=0}^{T-1}(j+s-1)/(j+s)}{(T+s-2)(T+s-1)}
 \le\frac{T}{(T+s-2)(T+s-1)}\le\frac1{T+s}.
\]
Combining the two bounds yields
\begin{equation}
 V_T\le\left(\frac{s}{T+s}\right)^2V_0
                         +\frac{8\sigma^2}{\mu(T+s)}.
 \label{eq:pl-potential-bound}
\end{equation}
Using $\Delta_T\le V_T$ and
$V_0\le\Delta_0+\sigma^2/\mu$ proves \eqref{eq:plbound}.

\emph{The two-phase schedule.}

Now fix any integer $T\ge1$, put $m=\lfloor T/2\rfloor$, and
use \eqref{eq:pl-two-phase}. During the first $m$ updates,
\eqref{eq:plscalar-app} gives
\[
 V_{t+1}\le\left(1-\frac2s\right)V_t
                                    +\frac{8\sigma^2}{\mu s^2}.
\]
Repeated substitution gives
\begin{align*}
 V_m
 &\le\left(1-\frac2s\right)^mV_0
       +\frac{8\sigma^2}{\mu s^2}
                      \sum_{j=0}^{m-1}\left(1-\frac2s\right)^j\\
 &=\left(1-\frac2s\right)^mV_0
       +\frac{4\sigma^2}{\mu s}
                         \left[1-\left(1-\frac2s\right)^m\right]\\
 &\le e^{-2m/s}V_0+\frac{4\sigma^2}{\mu s}.
\end{align*}
The final line uses
$1-u\le e^{-u}$ and drops a factor between zero and one
from the noise term.

There remain $n=T-m$ updates. Their parameters are
$\eta_{m+j}=2/[\mu(s+j)]$ for $j=0,\ldots,n-1$.
Applying the bound \eqref{eq:pl-potential-bound}
with initial value $V_m$ gives
\[
 V_T\le\left(\frac{s}{s+n}\right)^2V_m
                                      +\frac{8\sigma^2}{\mu(s+n)}.
\]
The potential $V_m$ already includes the estimation error $q_m$.
Hence this argument applies directly to the remaining updates,
without restarting $x_m$ or $d_m$.
Substitute the preceding bound for $V_m$ to obtain
\begin{align*}
 V_T
 &\le\left(\frac{s}{s+n}\right)^2 e^{-2m/s}V_0
       +\frac{4\sigma^2s}{\mu(s+n)^2}
       +\frac{8\sigma^2}{\mu(s+n)}\\
 &\le e^{-2m/s}V_0+\frac{12\sigma^2}{\mu(s+n)}\\
 &\le e^{-2m/s}V_0+\frac{24\sigma^2}{\mu(T+2s)}.
\end{align*}
The second line uses $s/(s+n)\le1$ and the third uses
$n=\lceil T/2\rceil\ge T/2$.
Finally, $\Delta_T\le V_T$ and the initial bound on $V_0$
give \eqref{eq:plphase}.
\end{proof}

\section{Proof of Theorem~\ref{thm:population-nonconvex}}
\label{app:population-nonconvex}
Throughout this section,
$\eta_t=\eta$ and $a_{t+1}=1-\beta$, where
$0<\eta\le1/(2L)$ and $0<\beta<1$. The main step of the proof is to absorb the
inherited estimation error into an auxiliary iterate. The remaining
noise then has conditional mean zero and can be treated by the usual
smoothness descent argument.
\begin{proof}
\emph{The estimation error has bounded second moment.}
Let $\cF_t=\sigma(\xi_0,\ldots,\xi_{t-1})$ denote the information
available before sample $\xi_t$ is drawn.
Write $\E_t[\cdot]=\E[\cdot\mid\cF_t]$ and define
\[
 e_t=d_t-\nabla F(x_t),\qquad e_{-1}=0.
\]
For $t\ge1$, subtract $\nabla F(x_t)$ from the estimator update:
\begin{align*}
 e_t
 &=\beta e_{t-1}+\nu_t,\\
 \nu_t
 &=g(x_t,\xi_t)-\nabla F(x_t)
   -\beta\bigl(g(x_{t-1},\xi_t)-\nabla F(x_{t-1})\bigr).
\end{align*}
Set $\nu_0=e_0$. The error recursion therefore holds also at $t=0$:
\begin{equation}
 e_t=\beta e_{t-1}+\nu_t,\qquad t\ge0.
 \label{eq:population-nc-error}
\end{equation}
Both query points in $\nu_t$ are $\cF_t$-measurable. Since $\xi_t$
is independent of $\cF_t$, oracle unbiasedness gives
\begin{equation}
 \E_t\nu_t=0.
 \label{eq:population-nc-centered}
\end{equation}
The two oracle errors in $\nu_t$ use the same sample and need not
be independent. We bound their sum by the conditional $L^2$ triangle
inequality and then use \eqref{eq:bv}:
\begin{align*}
 \bigl(\E_t\norm{\nu_t}^2\bigr)^{1/2}
 &\le
 \bigl(\E_t\norm{g(x_t,\xi_t)-\nabla F(x_t)}^2\bigr)^{1/2}+\beta
 \bigl(\E_t\norm{g(x_{t-1},\xi_t)-\nabla F(x_{t-1})}^2\bigr)^{1/2}\\
 &\le(1+\beta)\sigma.
\end{align*}
For $t=0$, initialization gives the stronger bound
$\E\norm{\nu_0}^2\le\sigma^2$. Thus, for every $t\ge0$,
\begin{equation}
 \E_t\norm{\nu_t}^2\le(1+\beta)^2\sigma^2.
 \label{eq:population-nc-noise}
\end{equation}
Squaring \eqref{eq:population-nc-error} and conditioning on $\cF_t$
eliminates its cross term, because $e_{t-1}$ is $\cF_t$-measurable
and \eqref{eq:population-nc-centered} holds. Consequently,
\[
 \E\norm{e_t}^2
 \le\beta^2\E\norm{e_{t-1}}^2+(1+\beta)^2\sigma^2.
\]
Starting from $e_{-1}=0$, repeated substitution gives
\begin{equation}
 \E\norm{e_t}^2
 \le(1+\beta)^2\sigma^2\sum_{j=0}^t\beta^{2j}
 \le\frac{(1+\beta)^2}{1-\beta^2}\sigma^2
 =\frac{1+\beta}{1-\beta}\sigma^2.
 \label{eq:population-nc-error-bound}
\end{equation}

\emph{An auxiliary iterate removes the inherited error.}
Define, only for the analysis,
\begin{equation}
 z_t=x_t-\frac{\eta\beta}{1-\beta}e_{t-1}.
 \label{eq:population-nc-virtual}
\end{equation}
Then $z_0=x_0$. The factor $\beta/(1-\beta)$ is chosen so that
the error cancels from the following update:
\begin{align*}
 z_{t+1}
 &=x_{t+1}-\frac{\eta\beta}{1-\beta}e_t\\
 &=x_t-\eta\nabla F(x_t)-\eta e_t
                         -\frac{\eta\beta}{1-\beta}e_t\\
 &=x_t-\eta\nabla F(x_t)-\frac{\eta}{1-\beta}e_t\\
 &=x_t-\frac{\eta\beta}{1-\beta}e_{t-1}
          -\eta\nabla F(x_t)-\frac{\eta}{1-\beta}\nu_t.
\end{align*}
The last equality uses \eqref{eq:population-nc-error}. Hence
\begin{equation}
 z_{t+1}=z_t-\eta\nabla F(x_t)-\frac{\eta}{1-\beta}\nu_t.
 \label{eq:population-nc-virtual-update}
\end{equation}
The update has centered noise, but evaluates the gradient at $x_t$
rather than $z_t$. Standard smoothness will control this difference.
Indeed, \eqref{eq:population-nc-virtual} gives the exact identity
\[
 \norm{x_t-z_t}^2
 =\frac{\eta^2\beta^2}{(1-\beta)^2}\norm{e_{t-1}}^2.
\]
No auxiliary sequence is computed by STORM.

\emph{Descent along the auxiliary sequence.}
Apply standard smoothness to
\eqref{eq:population-nc-virtual-update}:
\begin{align*}
 F(z_{t+1})
 \le{}F(z_t)-\eta\inner{\nabla F(z_t)}
                    {\nabla F(x_t)+\nu_t/(1-\beta)}
 +\frac{L\eta^2}{2}
                 \norm{\nabla F(x_t)+\nu_t/(1-\beta)}^2.
\end{align*}
Conditional on $\cF_t$,
$\E_t\nu_t=0$. Expanding the squared norm before taking conditional
expectations therefore yields
\begin{align*}
 \E_t F(z_{t+1})
 \le{}F(z_t)-\eta\inner{\nabla F(z_t)}{\nabla F(x_t)}
                +\frac{L\eta^2}{2}\norm{\nabla F(x_t)}^2+\frac{L\eta^2}{2(1-\beta)^2}\E_t\norm{\nu_t}^2.
\end{align*}
To express the decrease in terms of the gradient at the actual iterate,
use $\inner{u}{v}\le\norm{u}^2+\norm{v}^2/4$:
\begin{align*}
 -\inner{\nabla F(z_t)}{\nabla F(x_t)}
 &=-\norm{\nabla F(x_t)}^2
   +\inner{\nabla F(x_t)-\nabla F(z_t)}{\nabla F(x_t)}\\
 &\le-\frac34\norm{\nabla F(x_t)}^2
       +\norm{\nabla F(x_t)-\nabla F(z_t)}^2\\
 &\le-\frac34\norm{\nabla F(x_t)}^2
       +\frac{L^2\eta^2\beta^2}{(1-\beta)^2}\norm{e_{t-1}}^2.
\end{align*}
Since
$L\eta\le1/2$, the coefficient of the squared gradient is at most
$-\eta(3/4-L\eta/2)\le-\eta/2$. We obtain
\begin{align*}
 \E_t F(z_{t+1})
 \le{}F(z_t)-\frac\eta2\norm{\nabla F(x_t)}^2
       +\frac{L^2\eta^3\beta^2}{(1-\beta)^2}\norm{e_{t-1}}^2+\frac{L\eta^2(1+\beta)^2}{2(1-\beta)^2}\sigma^2.
\end{align*}
Taking full expectations and using
\eqref{eq:population-nc-error-bound}, which also bounds
$\E\norm{e_{-1}}^2=0$, gives
\begin{equation}\label{eq:population-nc-descent}
 \E F(z_{t+1})
 \le{}\E F(z_t)-\frac\eta2\E\norm{\nabla F(x_t)}^2+\frac{L^2\eta^3\beta^2(1+\beta)}{(1-\beta)^3}\sigma^2
       +\frac{L\eta^2(1+\beta)^2}{2(1-\beta)^2}\sigma^2.
\end{equation}

\emph{Summation and the choice of parameters.}
Rearrange \eqref{eq:population-nc-descent} and sum over
$t=0,\ldots,T-1$. The function values telescope, so
\begin{align*}
 \frac\eta2\sum_{t=0}^{T-1}\E\norm{\nabla F(x_t)}^2
 \le{}&F(z_0)-\E F(z_T)
       +\frac{L^2\eta^3T\beta^2(1+\beta)}{(1-\beta)^3}\sigma^2+\frac{L\eta^2T(1+\beta)^2}{2(1-\beta)^2}\sigma^2.
\end{align*}
Since $z_0=x_0$ and $F(z_T)\ge F^*$, division by $\eta T/2$ gives
the more precise bound
\begin{equation}\label{eq:population-nc-precise}
 \frac1T\sum_{t=0}^{T-1}\E\norm{\nabla F(x_t)}^2
 \le{}\frac{2\Delta_0}{\eta T}
       +\frac{(1+\beta)^2}{(1-\beta)^2}L\eta\sigma^2+\frac{2\beta^2(1+\beta)}{(1-\beta)^3}L^2\eta^2\sigma^2.
\end{equation}
The condition $2L\eta\le1$ bounds the last term by
$\beta^2(1+\beta)L\eta\sigma^2/(1-\beta)^3$. Combining it with
the preceding noise term uses only the identity
\[
 \frac{(1+\beta)^2}{(1-\beta)^2}
 +\frac{\beta^2(1+\beta)}{(1-\beta)^3}
 =\frac{(1+\beta)((1+\beta)(1-\beta)+\beta^2)}{(1-\beta)^3}
 =\frac{1+\beta}{(1-\beta)^3}.
\]
This proves \eqref{eq:population-nc-general}.

For $\eta=1/(2L\sqrt T)$, substitution gives
\[
 \frac1T\sum_{t=0}^{T-1}\E\norm{\nabla F(x_t)}^2
 \le\frac{4L\Delta_0+(1+\beta)\sigma^2/(2(1-\beta)^3)}{\sqrt T}.
\]
If $0<\beta\le1/2$, then
$(1+\beta)/(2(1-\beta)^3)\le(3/2)/(2(1/2)^3)=6$,
which proves the uniform bound.
For an independent uniform index $I\in\{0,\ldots,T-1\}$,
\[
 \E\norm{\nabla F(x_I)}
 \le\sqrt{\E\norm{\nabla F(x_I)}^2}
 =\sqrt{\frac1T\sum_{t=0}^{T-1}\E\norm{\nabla F(x_t)}^2}.
\]
\end{proof}

\emph{Dependence on the variance.}
For $0<\beta\le1/2$, \eqref{eq:population-nc-general} also gives
\[
 \frac1T\sum_{t=0}^{T-1}\E\norm{\nabla F(x_t)}^2
 \le\frac{2\Delta_0}{\eta T}+12L\eta\sigma^2.
\]
When $\Delta_0,\sigma>0$, choose
$\eta=\min\{1/(2L),\sqrt{\Delta_0/(6L\sigma^2T)}\}$.
The elementary inequalities
\[
 \eta^{-1}\le2L+\sqrt{\frac{6L\sigma^2T}{\Delta_0}},
 \qquad
 \eta\le\sqrt{\frac{\Delta_0}{6L\sigma^2T}}
\]
then yield
\[
 \frac1T\sum_{t=0}^{T-1}\E\norm{\nabla F(x_t)}^2
 \le\frac{4L\Delta_0}{T}
       +4\sqrt6\,\sigma\sqrt{\frac{L\Delta_0}{T}}.
\]
Thus the bound recovers the usual initialization and variance orders
of nonconvex SGD. 

If $\sigma=0$, taking $\eta=1/(2L)$ directly
gives $4L\Delta_0/T$. 

\section{Proof of Lemma~\ref{lem:geometry-main}}\label{app:companion}
\begin{proof}
For fixed $y$, let $\phi(z)=F(z)-\inner{\nabla F(y)}{z}$.
Convexity and $\nabla\phi(y)=0$ imply that $y$ minimizes $\phi$.
The smoothness upper model at $x$, evaluated at
$x-L^{-1}\nabla\phi(x)$, therefore gives
\[
 \phi(y)\le\phi\bigl(x-L^{-1}\nabla\phi(x)\bigr)
 \le\phi(x)-\frac{\norm{\nabla\phi(x)}^2}{2L}.
\]
Substituting the definition of $\phi$ yields
\[
 F(x)-F(y)-\inner{\nabla F(y)}{x-y}
 \ge\frac{\norm{\nabla F(x)-\nabla F(y)}^2}{2L}.
\]
Interchanging $x,y$ and adding cancels the function values. With
$u=x-y$ and $v=\nabla F(x)-\nabla F(y)$, the result is
$\inner{v}{u}\ge\norm{v}^2/L$. Hence
\[
 \norm{u-\eta v}^2
 =\norm{u}^2-2\eta\inner{u}{v}+\eta^2\norm{v}^2
 \le\norm{u}^2-\eta(2/L-\eta)\norm{v}^2
 \le\norm{u}^2.
\]
Taking square roots proves \eqref{eq:nonexpansive-main}.

\emph{Contraction under strong convexity.}
Assume first that $L>\lambda$ and set
$H(z)=F(z)-\lambda\norm{z}^2/2$. Subtracting the quadratic term
from the strong-convexity lower model and the smoothness upper model gives
\[
 0\le H(z)-H(w)-\inner{\nabla H(w)}{z-w}
       \le\frac{L-\lambda}{2}\norm{z-w}^2.
\]
Thus $H$ is convex and has an upper quadratic model with coefficient
$L-\lambda$. The preceding proof of nonexpansiveness uses only these
two properties, so it applies to $H$. Since $\eta\le1/L$ and
$L>\lambda$, we have $1-\lambda\eta>0$ and
\[
 \frac{\eta}{1-\lambda\eta}\le\frac1{L-\lambda};
\]
after multiplying by the positive denominators, this is just
$L\eta\le1$. Using $\nabla F(z)=\nabla H(z)+\lambda z$,
\begin{align*}
 &\norm{x-\eta\nabla F(x)-y+\eta\nabla F(y)}\\
 &\quad=(1-\lambda\eta)
 \norm{x-\frac{\eta}{1-\lambda\eta}\nabla H(x)
          -y+\frac{\eta}{1-\lambda\eta}\nabla H(y)}\\
 &\quad\le(1-\lambda\eta)\norm{x-y}.
\end{align*}
If $L=\lambda$, the lower and upper models for $H$ coincide, so $H$
is affine. Hence $\nabla F(x)-\nabla F(y)=\lambda(x-y)$ and
\eqref{eq:contraction-main0} holds with equality.

\emph{Comparison of function gaps.}
The smoothness upper model evaluated at one gradient step gives
\[
 F^*\le F\bigl(y-L^{-1}\nabla F(y)\bigr)
 \le F(y)-\frac{\norm{\nabla F(y)}^2}{2L}.
\]
Rearrangement proves \eqref{eq:self-bound-main}. Apply the upper model
once more to $F(x)$, and use Young's inequality:
\begin{align*}
 F(x)-F^*
 &\le F(y)-F^*+\inner{\nabla F(y)}{x-y}+\frac L2\norm{x-y}^2\\
 &\le F(y)-F^*+\frac{\norm{\nabla F(y)}^2}{2L}+L\norm{x-y}^2\\
 &\le2(F(y)-F^*)+L\norm{x-y}^2.
\end{align*}
The two function-gap inequalities require smoothness only.
\end{proof}

\section{Proof of Lemma~\ref{lem:companion}}
\begin{proof}
Let $\mathcal F_t=\sigma(\xi_0,\ldots,\xi_{t-1})$ be the information
before drawing $\xi_t$.
Write $\zeta(x,\xi)=g(x,\xi)-\nabla F(x)$ and
$e_t=d_t-\nabla F(x_t)$. Set $e_{-1}=0$, $\nu_0=e_0$, and
$\nu_t=\zeta(x_t,\xi_t)-\beta\zeta(x_{t-1},\xi_t)$ for $t\ge1$.
Subtracting $\nabla F(x_t)$ from \eqref{eq:estimator} yields
$e_t=\beta e_{t-1}+\nu_t$. Fresh-sample independence and unbiasedness give
$\E[\nu_t\mid\mathcal F_t]=0$. The conditional $L^2$
triangle inequality gives
\begin{align*}
 \bigl(\E[\norm{\nu_t}^2\mid\mathcal F_t]\bigr)^{1/2}
\le\bigl(\E[\norm{\zeta(x_t,\xi_t)}^2\mid\mathcal F_t]\bigr)^{1/2}+\beta\bigl(\E[\norm{\zeta(x_{t-1},\xi_t)}^2
                                  \mid\mathcal F_t]\bigr)^{1/2}
 \le(1+\beta)\sigma.
\end{align*}
At $t=0$ the same bound follows directly from \eqref{eq:bv}.
Thus, with $S^2=(1+\beta)^2\sigma^2$,
\begin{equation}
 \E[\nu_t\mid\mathcal F_t]=0,\qquad
 \E[\norm{\nu_t}^2\mid\mathcal F_t]\le S^2\le9\sigma^2/4.
 \label{eq:innovation-app}
\end{equation}

\emph{The estimation error.}
The previous error is $\mathcal F_t$-measurable, so
$\E\inner{e_{t-1}}{\nu_t}=0$. Squaring the error recursion and
substituting recursively from $e_{-1}=0$ gives
\[
 \E\norm{e_t}^2\le \beta^2\E\norm{e_{t-1}}^2+S^2
 \le S^2\sum_{j=0}^t \beta^{2j}
 \le\frac{(1+\beta)^2\sigma^2}{1-\beta^2}
 =\frac{1+\beta}{1-\beta}\sigma^2\le3\sigma^2.
\]
Define the mathematical companion by
\begin{equation}
 y_0=x_0,\qquad y_{t+1}=y_t-\eta_t(\nabla F(y_t)+\nu_t).
 \label{eq:companion-app}
\end{equation}
Subtracting \eqref{eq:companion-app} from \eqref{eq:primal} cancels
the shared innovation:
\[
 x_{t+1}-y_{t+1}
 =x_t-\eta_t\nabla F(x_t)-y_t+\eta_t\nabla F(y_t)
                                      -\eta_t \beta e_{t-1}.
\]
For $r_t=(\E\norm{x_t-y_t}^2)^{1/2}$, Minkowski's inequality,
\eqref{eq:nonexpansive-main}, and the estimation-error bound above yield
$r_{t+1}\le r_t+\sqrt3 \beta\sigma\eta_t$.
Summing $r_{j+1}-r_j\le\sqrt3\beta\sigma\eta_j$ from $j=0$
to $t-1$, with $r_0=0$, gives
$r_t\le\sqrt3\beta\sigma\sum_{j<t}\eta_j$. Squaring gives
\begin{equation}
 \E\norm{x_t-y_t}^2
 \le3\beta^2\sigma^2\left(\sum_{j=0}^{t-1}\eta_j\right)^2.
 \label{eq:coupling-convex-app}
\end{equation}
Under $\lambda$-strong convexity, \eqref{eq:contraction-main0} instead gives
$r_{t+1}\le(1-\lambda\eta_t)r_t+\sqrt3 \beta\sigma\eta_t$.
Starting from $r_0=0$, the induction
\[
 r_{t+1}\le(1-\lambda\eta_t)\frac{\sqrt3 \beta\sigma}{\lambda}
                         +\sqrt3 \beta\sigma\eta_t
              =\frac{\sqrt3 \beta\sigma}{\lambda}
\]
proves
\begin{equation}
 \E\norm{x_t-y_t}^2\le3\beta^2\sigma^2/\lambda^2.
 \label{eq:coupling-strong-app}
\end{equation}
Under strong convexity, this gives
\begin{equation}
 \E[F(x_t)-F^*]
 \le2\E[F(y_t)-F^*]+3L\beta^2\sigma^2/\lambda^2.
 \label{eq:expected-transfer-app}
\end{equation}
Combining the same pointwise inequality with
\eqref{eq:coupling-convex-app} gives the convex assertion explicitly:
\begin{equation}
 \E[F(x_t)-F^*]\le2\E[F(y_t)-F^*]
       +3L\beta^2\sigma^2\left(\sum_{j=0}^{t-1}\eta_j\right)^2.
 \label{eq:convex-transfer-app}
\end{equation}
\end{proof}

\section{Other useful lemmas}
\subsection{A shared one-step inequality}
\begin{lemma}
\label{lem:convex-distance}
Let $y_t$ be an adapted recursion of the form \eqref{eq:companion-app}
with \eqref{eq:innovation-app}, where $y_t$ is $\mathcal F_t$-measurable.
Under Assumptions~\ref{ass:smooth} and~\ref{ass:convex}, suppose
$y_0=x_0$. Define
$D_t=\E\norm{y_t-x^*}^2$, $\delta_t=\E[F(y_t)-F^*]$,
and $h_t=\nabla F(y_t)+\nu_t$. For deterministic
$0<\eta_t\le1/(2L)$,
\begin{align}
 \E\norm{h_t}^2&\le2L\delta_t+S^2,
 \label{eq:convex-moment}\\
 D_{t+1}&\le D_t-\eta_t\delta_t+\eta_t^2S^2.
 \label{eq:convex-distance}
\end{align}
\end{lemma}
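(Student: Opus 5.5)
The plan is to prove \eqref{eq:convex-moment} by conditional centering followed by the self-bounding inequality, and then to obtain \eqref{eq:convex-distance} by expanding the squared distance to $x^*$ and applying convexity.

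\emph{Moment bound.} Because $y_t$ is $\mathcal F_t$-measurable and $\E[\nu_t\mid\mathcal F_t]=0$, the cross term has zero conditional mean:
\[
 \E[\norm{h_t}^2\mid\mathcal F_t]=\norm{\nabla F(y_t)}^2+\E[\norm{\nu_t}^2\mid\mathcal F_t].
\]
The first term is at most $2L(F(y_t)-F^*)$ by \eqref{eq:self-bound-main}, which needs only smoothness. The second term is at most $S^2$ by \eqref{eq:innovation-app}. Taking full expectations gives \eqref{eq:convex-moment}.

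\emph{Distance recursion.} Expand
\[
 \norm{y_{t+1}-x^*}^2=\norm{y_t-x^*}^2-2\eta_t\inner{h_t}{y_t-x^*}+\eta_t^2\norm{h_t}^2.
\]
Since $y_t-x^*$ is $\mathcal F_t$-measurable, conditioning kills the $\nu_t$ part of the inner product. Convexity (Assumption~\ref{ass:convex} with $y=x^*$) then gives $\inner{\nabla F(y_t)}{y_t-x^*}\ge F(y_t)-F^*$. Taking expectations and inserting \eqref{eq:convex-moment},
\[
 D_{t+1}\le D_t-2\eta_t\delta_t+2L\eta_t^2\delta_t+\eta_t^2S^2 .
\]
The stepsize condition $\eta_t\le1/(2L)$ gives $2L\eta_t^2\le\eta_t$, so the coefficient of $\delta_t$ is at most $-\eta_t$. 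Since $\delta_t\ge0$, this yields \eqref{eq:convex-distance}.

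\emph{Main obstacle.} There is no real difficulty; the only delicate point is measurability. Both cross terms vanish only because $y_t$ is $\mathcal F_t$-measurable, so they must be handled by conditioning before expanding. One should also make sure the $t=0$ case, where $\nu_0=e_0$, satisfies the same centering and variance bound; \eqref{eq:innovation-app} covers this.
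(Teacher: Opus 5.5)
Your proposal is correct and follows the paper's proof essentially step for step. The moment bound comes from conditional centering plus the self-bounding inequality \eqref{eq:self-bound-main}. The distance recursion comes from expanding $\norm{y_{t+1}-x^*}^2$, removing the $\nu_t$ cross term by conditioning, applying convexity at $x^*$, inserting the moment bound, and using $2L\eta_t\le1$ with $\delta_t\ge0$ to reduce the coefficient $-2\eta_t(1-L\eta_t)$ to $-\eta_t$.
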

\begin{proof}
Note that
$\norm{\nabla F(y_t)}\le L\norm{y_t-x^*}$ and conditional centering justifies
$\E\inner{\nabla F(y_t)}{\nu_t}=0$. Expanding $\norm{h_t}^2$
and using \eqref{eq:self-bound-main} proves \eqref{eq:convex-moment}.
For distance, the same centering and convexity give
\begin{align*}
 D_{t+1}
 &=D_t-2\eta_t\E\inner{h_t}{y_t-x^*}+\eta_t^2\E\norm{h_t}^2\\
 &\le D_t-2\eta_t\E\inner{\nabla F(y_t)}{y_t-x^*}
                                      +\eta_t^2(2L\delta_t+S^2)\\
 &\le D_t-2\eta_t(1-L\eta_t)\delta_t+\eta_t^2S^2\\
 &\le D_t-\eta_t\delta_t+\eta_t^2S^2.
\end{align*}
The third line uses
$\inner{\nabla F(y_t)}{y_t-x^*}\ge F(y_t)-F^*$.
The last uses $2(1-L\eta_t)\ge1$ and $\delta_t\ge0$.
\end{proof}

\subsection{Rounded-block geometry for arbitrary integer horizons}
\begin{lemma}
\label{lem:rounded-geometry}
For the partition \eqref{eq:rounded-boundaries} and $T\ge1$, put
$M=\lceil (T+1)/4\rceil$, $K=\lceil (T+1)/2\rceil$. Define
\begin{equation}
 B_0=\{M-1,\ldots,K-1\},\qquad
 B_i=\{U_i,\ldots,U_{i+1}-1\}\quad(1\le i\le m).
 \label{eq:comparison-blocks}
\end{equation}
Then $U_1=K$, $B_m=\{T\}$, and
\begin{gather}
 M\ge (T+1)/4,\quad K\ge (T+1)/2,\quad K\in\{2M-1,2M\},\nonumber\\
 n:=K-M+1\ge (T+1)/4,\quad K/n\le2,
 \label{eq:rounded-geometry}\\
 |B_0|\le2|B_1|,\qquad
 |B_i|\le3|B_{i+1}|\quad(1\le i<m).
 \label{eq:rounded-block-ratios}
\end{gather}
In particular, a suffix of $B_i$ together with all of $B_{i+1}$
contains at most $4|B_{i+1}|$ indices.
\end{lemma}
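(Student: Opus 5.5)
The whole lemma is elementary integer bookkeeping. I would set $N=T+1\ge2$ and $a_i=\lfloor N/2^i\rfloor$, so that $U_i=N-a_i$. The one fact used repeatedly is the floor identity $a_{i+1}=\lfloor a_i/2\rfloor$ (nested floors). From it, $|B_i|=U_{i+1}-U_i=a_i-a_{i+1}=\lceil a_i/2\rceil$ for $0\le i<m$.

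\emph{Boundary facts.} First, $U_0=0$ and $U_1=N-\lfloor N/2\rfloor=\lceil N/2\rceil=K$. Since $m=\lfloor\log_2 N\rfloor$, we have $2^m\le N<2^{m+1}$, so $a_m=1$. This gives $U_m=T$, and with $U_{m+1}=T+1$ it gives $B_m=\{T\}$. Nonemptiness of every block follows because $a_i\ge1$ for $i\le m$ and $\lfloor a/2\rfloor<a$ for every integer $a\ge1$. Also $N\ge2$ forces $m\ge1$, so $B_1$ is defined.

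\emph{The quantities $M,K,n$.} The bounds $M\ge N/4$ and $K\ge N/2$ are immediate from the ceilings. For the rest I would write $N=4q+r$ with $r\in\{0,1,2,3\}$ and tabulate:
\begin{itemize}
\item $r=0$: $(M,K)=(q,2q)$;
\item $r=1$: $(M,K)=(q+1,2q+1)$;
\item $r=2$: $(M,K)=(q+1,2q+1)$;
\item $r=3$: $(M,K)=(q+1,2q+2)$.
\end{itemize}
In every case $K\in\{2M-1,2M\}$. Consequently $n=K-M+1\ge M\ge N/4$. Moreover $K\le 2n$ is equivalent to $K\ge 2M-2$, which holds, so $K/n\le2$.

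\emph{Block ratios.} For $|B_0|=n$ versus $|B_1|=\lceil\lfloor N/2\rfloor/2\rceil$, I use the same four residue cases:
\begin{itemize}
\item $r=0$: $n=q+1$, $|B_1|=q$, with $q\ge1$;
\item $r=1$: $n=q+1$, $|B_1|=q$, with $q\ge1$ because $N\ge2$;
\item $r=2$: $n=q+1$, $|B_1|=q+1$;
\item $r=3$: $n=q+2$, $|B_1|=q+1$.
\end{itemize}
Each case satisfies $n\le 2|B_1|$.

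For $1\le i<m$, put $b=a_{i+1}=\lfloor a_i/2\rfloor\ge1$. Then $|B_i|=\lceil a_i/2\rceil\le b+1$. If $i+1<m$, then $|B_{i+1}|=\lceil b/2\rceil$. The inequality $b+1\le3\lceil b/2\rceil$ holds for $b=1$ (since $2\le3$) and for $b\ge2$ (since $3b/2\ge b+1$). If $i+1=m$, then $|B_m|=1$ and $a_{m-1}\in\{2,3\}$, so $|B_{m-1}|\le2\le3$.

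\emph{Final claim.} A suffix of $B_i$ has at most $|B_i|$ indices. By the ratio bounds this is at most $3|B_{i+1}|$, and for $i=0$ at most $2|B_1|$. Adding $|B_{i+1}|$ gives at most $4|B_{i+1}|$ indices in total.

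The only point needing care is the edge case $i+1=m$, where the formula $\lceil a/2\rceil$ fails for the singleton block, together with the small-$q$ residue cases. Both are settled by the explicit checks above.
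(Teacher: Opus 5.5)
Your proof is correct and follows essentially the same route as the paper: a residue-mod-$4$ table for $M$, $K$, $n$, $|B_0|$, $|B_1|$ (your entries match the paper's exactly), then the nested-floor identity $a_{i+1}=\lfloor a_i/2\rfloor$ giving $|B_i|=\lceil a_i/2\rceil$, a bound of the form $|B_i|\le 2|B_{i+1}|+1\le 3|B_{i+1}|$, and a separate check of the final pair $(B_{m-1},B_m)$. Two small slips, neither of which affects the argument. First, the formula $\lceil a_i/2\rceil$ does not actually fail at the singleton block, since $a_m=1$ gives $\lceil 1/2\rceil=1=|B_m|$. Second, at $i=0$ that formula describes the schedule block $\{0,\ldots,K-1\}$ rather than the comparison block $B_0$; you correctly handle $B_0$ separately via $|B_0|=n$.
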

\begin{proof}
The identities $U_1=\lceil (T+1)/2\rceil$ and
$\lfloor (T+1)/2^m\rfloor=1$ prove the boundary assertions.
Writing $T+1=4q+r$ with $r\in\{0,1,2,3\}$ verifies
$K\in\{2M-1,2M\}$. Hence $n\ge M\ge (T+1)/4$ and $K\le2n$.
The same four cases give
\[
 \begin{array}{c|cccc}
 r&0&1&2&3\\ \hline
 |B_0|&q+1&q+1&q+1&q+2\\
 |B_1|&q&q&q+1&q+1.
 \end{array}
\]
For $r=0,1$, $T\ge1$ implies $q\ge1$; for $r=2,3$, $q\ge0$
is enough. Thus $|B_0|\le2|B_1|$ in every case, including
$T=1,2$.

For the remaining ratios, let $r_i=\lfloor (T+1)/2^i\rfloor$.
For $1\le i<m$, $r_{i+1}=\lfloor r_i/2\rfloor$ and
$|B_i|=r_i-r_{i+1}=\lceil r_i/2\rceil$. For $1\le i<m-1$,
write $r_i=2r_{i+1}+\varepsilon$ with $\varepsilon\in\{0,1\}$.
It follows that
\[
 |B_i|=r_{i+1}+\varepsilon
 \le2\lceil r_{i+1}/2\rceil+1
 =2|B_{i+1}|+1\le3|B_{i+1}|.
\]
For the final pair, $r_m=1$ and $r_{m-1}\in\{2,3\}$,
so $|B_{m-1}|\le2$ and $|B_m|=1$. 
\end{proof}

\section{Proof of Theorem~\ref{thm:convex}}
\label{app:convex}

\subsection{General stepsize-weighted and constant-step means}
\begin{theorem}
\label{thm:convex-mean}
Under Assumptions~\ref{ass:smooth}, \ref{ass:oracle}, and
\ref{ass:convex}, let $T\ge1$
be any integer, take $\beta=1/(T+2)$, and choose deterministic stepsizes
$0<\eta_t\le1/(2L)$ for $0\le t\le T$. Define
\[
 \Lambda_T=\sum_{t=0}^T\eta_t,\qquad
 \bar x_T^\eta=\frac1{\Lambda_T}\sum_{t=0}^T\eta_t x_t.
\]
Then
\begin{equation}
 \E[F(\bar x_T^\eta)-F^*]
 \le\frac{2R^2}{\Lambda_T}
     +\frac{9\sigma^2}{2\Lambda_T}\sum_{t=0}^T\eta_t^2
     +\frac{3L\sigma^2\Lambda_T^2}{(T+2)^2}.
 \label{eq:convex-weighted-general}
\end{equation}
\end{theorem}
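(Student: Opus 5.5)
We prove the bound in three steps. First we establish a weighted-average guarantee for the auxiliary companion sequence $y_t$ of Lemma~\ref{lem:companion}. Then we transfer it back to the STORM iterates $x_t$ with the comparison bound \eqref{eq:convex-transfer-app}. Finally, Jensen's inequality moves the convexity from the average of function values to the function value at the weighted mean.

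\emph{Step 1 (companion average).} The stepsizes satisfy $\eta_t\le1/(2L)$, so Lemma~\ref{lem:convex-distance} applies to $y_t$ and gives $\eta_t\delta_t\le D_t-D_{t+1}+\eta_t^2S^2$. Summing over $t=0,\dots,T$ telescopes the distances. With $D_0=R^2$ (since $y_0=x_0$) and $D_{T+1}\ge0$ we get
\[
 \sum_{t=0}^T\eta_t\delta_t\le R^2+S^2\sum_{t=0}^T\eta_t^2 .
\]
Because $\beta=1/(T+2)\le1/3$, we have $S^2=(1+\beta)^2\sigma^2\le\tfrac{16}{9}\sigma^2\le\tfrac94\sigma^2$.

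\emph{Step 2 (transfer to $x_t$).} Apply \eqref{eq:convex-transfer-app} at each $t$:
\[
 \E[F(x_t)-F^*]\le2\delta_t+3L\beta^2\sigma^2\Bigl(\sum_{j<t}\eta_j\Bigr)^2\le2\delta_t+3L\beta^2\sigma^2\Lambda_T^2 .
\]
Multiply by $\eta_t$, sum over $t$, and divide by $\Lambda_T$. This gives
\[
 \frac1{\Lambda_T}\sum_{t}\eta_t\E[F(x_t)-F^*]\le\frac{2R^2}{\Lambda_T}+\frac{2S^2}{\Lambda_T}\sum_t\eta_t^2+\frac{3L\sigma^2\Lambda_T^2}{(T+2)^2}.
\]
We have $2S^2\le\tfrac92\sigma^2$, and $\beta^2=(T+2)^{-2}$, so every constant matches \eqref{eq:convex-weighted-general}.

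\emph{Step 3 (Jensen).} $F$ is convex and the weights $\eta_t/\Lambda_T$ are deterministic and sum to one. Hence $F(\bar x_T^\eta)\le\sum_t(\eta_t/\Lambda_T)F(x_t)$ pointwise, and taking expectations finishes the proof.

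The only delicate point is the second step. The transfer inequality carries a cumulative-stepsize factor $(\sum_{j<t}\eta_j)^2$, and we must not let it grow with $T$ beyond what $\beta=1/(T+2)$ can absorb. Bounding it crudely by $\Lambda_T^2$ is exactly what produces the third term of the statement, so no finer bound is needed here. Checking that the constant $2S^2\le 9\sigma^2/2$ holds for $\beta\le1/3$ is routine.
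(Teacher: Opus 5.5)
Your proposal is correct and matches the paper's proof step for step. You telescope \eqref{eq:convex-distance} for the companion to get $\sum_t\eta_t\delta_t\le R^2+S^2\sum_t\eta_t^2$, transfer each summand via \eqref{eq:convex-transfer-app} with every prefix sum bounded by $\Lambda_T$, and apply Jensen with the deterministic weights $\eta_t/\Lambda_T$; the paper invokes Jensen first, and your check that $\beta\le 1/3$ gives $2S^2\le\tfrac92\sigma^2$ is the same constant bookkeeping it does via $S^2\le 9\sigma^2/4$.
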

\begin{proof}
Apply Lemma~\ref{lem:companion}, and use the notation
$D_t=\E\norm{y_t-x^*}^2$ and
$\delta_t=\E[F(y_t)-F^*]$.
Rearrange \eqref{eq:convex-distance} as
$\eta_t\delta_t\le D_t-D_{t+1}+\eta_t^2S^2$.
Summing over $t=0,\ldots,T$ cancels each interior distance:
\begin{align*}
 \sum_{t=0}^T\eta_t\delta_t
 &\le\sum_{t=0}^T(D_t-D_{t+1})+S^2\sum_{t=0}^T\eta_t^2=D_0-D_{T+1}+S^2\sum_{t=0}^T\eta_t^2
 \le R^2+S^2\sum_{t=0}^T\eta_t^2.
\end{align*}
The final distance is nonnegative and can be discarded.

The weights $\eta_t/\Lambda_T$ are positive and sum to one.
Apply convexity to the weighted mean of STORM iterates, then apply
\eqref{eq:convex-transfer-app} to each summand:
\begin{align*}
 \E[F(\bar x_T^\eta)-F^*]
 &\le\frac1{\Lambda_T}\sum_{t=0}^T\eta_t
                                  \E[F(x_t)-F^*]\\
 &\le\frac2{\Lambda_T}\sum_{t=0}^T\eta_t\delta_t
    +\frac{3L\beta^2\sigma^2}{\Lambda_T}
        \sum_{t=0}^T\eta_t\left(\sum_{j=0}^{t-1}\eta_j\right)^2\\
 &\le\frac{2R^2}{\Lambda_T}
       +\frac{2S^2}{\Lambda_T}\sum_{t=0}^T\eta_t^2
       +3L\beta^2\sigma^2\Lambda_T^2.
\end{align*}
For the last term, every prefix sum is at most $\Lambda_T$, and
$\sum_t\eta_t=\Lambda_T$. Substituting $\beta=1/(T+2)$ and
$S^2\le9\sigma^2/4$ proves the theorem.
\end{proof}

\begin{corollary}
\label{cor:convex-constant-app}
With $\eta_t=\eta\le1/(2L)$, the weighted mean becomes the uniform
mean such that $\bar x_T=(T+1)^{-1}\sum_{t=0}^T x_t$, and
\begin{equation}
 \E[F(\bar x_T)-F^*]
 \le\frac{2R^2}{\eta (T+1)}+6\eta\sigma^2.
 \label{eq:convex-constant}
\end{equation}
If $R>0$ and $\sigma>0$, choose
$\eta=\min\{1/(2L),\,R/(\sqrt3\sigma\sqrt{T+1})\}$ to obtain
\begin{equation}
 \E[F(\bar x_T)-F^*]
 \le\frac{4LR^2}{T+1}+\frac{4\sqrt3\sigma R}{\sqrt{T+1}}.
 \label{eq:convex-constant-tuned}
\end{equation}
\end{corollary}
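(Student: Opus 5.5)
This follows by specializing Theorem~\ref{thm:convex-mean} to constant stepsizes and then choosing $\eta$. With $\eta_t=\eta$ for $0\le t\le T$, the weights $\eta_t/\Lambda_T$ all equal $1/(T+1)$, so $\bar x_T^\eta$ is the uniform mean. The quantities in \eqref{eq:convex-weighted-general} become
\[
\Lambda_T=\eta(T+1),\qquad \sum_{t=0}^T\eta_t^2=\eta^2(T+1).
\]
The first term is therefore $2R^2/(\eta(T+1))$ and the second is $\tfrac92\eta\sigma^2$. The hypothesis $\eta\le1/(2L)$ is exactly the stepsize condition of Theorem~\ref{thm:convex-mean}, so the theorem applies directly.

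The only step needing care is the bias term $3L\sigma^2\Lambda_T^2/(T+2)^2=3L\sigma^2\eta^2(T+1)^2/(T+2)^2$, which must be absorbed into the $\eta\sigma^2$ term. Since $(T+1)/(T+2)\le1$ and $L\eta\le1/2$, it is at most $3L\eta^2\sigma^2\le\tfrac32\eta\sigma^2$. Adding this to $\tfrac92\eta\sigma^2$ gives exactly $6\eta\sigma^2$, which proves \eqref{eq:convex-constant}.

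For the tuned bound, write $\eta=\min\{1/(2L),\eta_\star\}$ with $\eta_\star=R/(\sqrt3\sigma\sqrt{T+1})$. Then
\[
\eta^{-1}\le2L+\eta_\star^{-1},\qquad \eta\le\eta_\star .
\]
The first inequality gives
\[
\frac{2R^2}{\eta(T+1)}\le\frac{4LR^2}{T+1}+\frac{2\sqrt3\,\sigma R}{\sqrt{T+1}}.
\]
The second gives
\[
6\eta\sigma^2\le6\eta_\star\sigma^2=\frac{2\sqrt3\,\sigma R}{\sqrt{T+1}}.
\]
Summing these yields \eqref{eq:convex-constant-tuned}. I expect no real obstacle: everything is direct substitution, and the only delicate point is keeping the absorption constant at exactly $\tfrac32$ so the total is $6$.
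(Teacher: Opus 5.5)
Your proposal is correct and follows the paper's proof essentially line by line: you specialize Theorem~\ref{thm:convex-mean} with $\Lambda_T=\eta(T+1)$, absorb the bias term $3L\eta^2\sigma^2\le\tfrac32\eta\sigma^2$ via $(T+1)/(T+2)\le1$ and $L\eta\le1/2$, and then tune using $\eta^{-1}\le 2L+\eta_\star^{-1}$ and $\eta\le\eta_\star$. The constants $6$ and $4\sqrt3$ come out exactly as stated.
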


\begin{proof}
Here $\Lambda_T=(T+1)\eta$ and $\sum_t\eta_t^2=(T+1)\eta^2$.
Since $(T+1)/(T+2)\le1$ and $L\eta\le1/2$, the last two terms of
\eqref{eq:convex-weighted-general} are at most
$(9/2)\eta\sigma^2+3L\eta^2\sigma^2\le6\eta\sigma^2$.
For the tuned result, use
\[
 \frac1\eta=\max\{2L,\sqrt3\sigma\sqrt{T+1}/R\}
        \le2L+\sqrt3\sigma\sqrt{T+1}/R,\qquad
 \eta\le\frac R{\sqrt3\sigma\sqrt{T+1}}.
\]
The first inequality bounds the initialization term by
$4LR^2/(T+1)+2\sqrt3\sigma R/\sqrt{T+1}$; the second bounds the
noise term by $2\sqrt3\sigma R/\sqrt{T+1}$, giving the stated bound.
If $\sigma=0$, take $\eta=1/(2L)$.
\end{proof}

\subsection{The companion's last-iterate bound}
\begin{lemma}
\label{lem:convex-blocks}
Under Assumptions~\ref{ass:smooth} and~\ref{ass:convex}, let
$y_{t+1}=y_t-\eta_t(\nabla F(y_t)+\nu_t)$, with $y_0=x_0$,
$y_t$ measurable with respect to the filtration $\mathcal F_t$,
$\E[\nu_t\mid\mathcal F_t]=0$, and
$\E[\norm{\nu_t}^2\mid\mathcal F_t]\le S^2$.
For $T\ge1$ and the stepsizes in \eqref{eq:convex-schedule},
\begin{equation}
 \max\left\{\delta_T,\,
                  \frac1{T+1}\sum_{t=0}^T\delta_t\right\}
 \le\frac{4\mathrm e R^2}{\eta (T+1)}+18\mathrm e\eta S^2,
 \qquad \delta_t=\E[F(y_t)-F^*].
 \label{eq:convex-companion-last}
\end{equation}
\end{lemma}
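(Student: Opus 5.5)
The plan is to analyse the companion SGD recursion block by block along the rounded partition \eqref{eq:rounded-boundaries}, using the comparison inequality \eqref{eq:block-comparison-main} together with the moment bound \eqref{eq:convex-moment}. The argument has three stages: an averaged bound over a long initial segment, a block-to-block transfer, and a summation over blocks.

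\textbf{Step 1: averaged bound on a long initial segment.} On $\{0,\ldots,K-1\}$ all stepsizes equal $\eta$. Summing \eqref{eq:convex-distance} over this range and dividing by $\eta K$ gives
\[
 \frac1K\sum_{t<K}\delta_t\le\frac{R^2}{\eta K}+\eta S^2 .
\]
Since $K\ge (T+1)/2$, this is at most $2R^2/(\eta(T+1))+\eta S^2$. By Lemma~\ref{lem:rounded-geometry}, $K/n\le2$, so the average of $\delta_t$ over the suffix $B_0=\{M-1,\ldots,K-1\}$ is at most twice this quantity. Call the resulting bound $A_0$. The same estimate, with $K\ge(T+1)/2$, also controls the first half of the uniform-mean claim.

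\textbf{Step 2: block-to-block transfer.} Suppose we already have a bound $A_i$ on the average of $\delta_t$ over some index set $P\subset B_i$ (for $i=0$, $P=B_0$). For each reference point $p\in P$, apply \eqref{eq:block-comparison-main} with $q$ running to the end of $B_{i+1}$. Insert \eqref{eq:convex-moment} on the right-hand side. Inside the window every $\eta_t\le2^{-i}\eta\le1/(32L)$, so the term $2L\eta_t^2\delta_t$ is absorbed into the left-hand side, up to an extra $\delta_p$ contribution. Averaging over $p\in P$ in the standard last-iterate suffix fashion (a Shamir--Zhang style argument) should give an estimate of the form
\[
 \frac1{|B_{i+1}|}\sum_{t\in B_{i+1}}\delta_t
 \le \Bigl(1+\frac{c}{|B_{i+1}|}\Bigr)\cdot\text{(avg over suffix)}+c'\,2^{-i}\eta S^2 .
\]
Here the window-length ratio is bounded by $4$, using the closing statement of Lemma~\ref{lem:rounded-geometry}. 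The stepsizes are halved at each boundary and the block lengths shrink geometrically, so the additive noise terms $2^{-i}\eta S^2$ sum to $O(\eta S^2)$. The multiplicative factors have product $\prod_i(1+1/k_i)$, which I would bound by $\mathrm e$; this is the origin of the constant $\mathrm e$ in the statement. Doubling the initialization constant from Step 1 then gives the claimed $4\mathrm e R^2/(\eta(T+1))$.

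\textbf{Step 3: conclusion.} The last block is $B_m=\{T\}$, so the recursion of Step 2 yields the bound on $\delta_T$ directly. For the uniform mean over all $T+1$ indices, I would sum the block averages weighted by $|B_i|$. Each block average is bounded by the same right-hand side, so the overall average obeys the same bound.

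\textbf{Main obstacle.} The delicate part is Step 2. The reference point $p$ must be chosen so that the comparison yields a multiplicative factor $1+O(1/k)$ with summable $k$, rather than a constant factor that would compound over the $\log T$ blocks. I would handle this by applying the telescoping suffix-averaging identity inside the two-block window, with window sizes $k$ ranging over $\{1,\ldots,4|B_{i+1}|\}$. I would also track how many of the $1/k$ terms arise from the geometric block lengths, so that the product stays below $\mathrm e$. Finally, I need to confirm that the absorption constant $2L\eta_t\le 1/16$ is small enough for the coefficients to close with the stated numbers, in particular the factor $18$ on the noise term.
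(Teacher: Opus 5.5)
There is a genuine gap in Step 2, which is the step the whole lemma rests on. Steps 1 and 3 are fine. In particular, your bound $4R^2/(\eta(T+1))+2\eta S^2$ on the average of $\delta_t$ over $B_0$ is exactly the starting value needed, since a minimum is at most an average. Step 2, however, is only asserted (``should give''), and the mechanism you describe does not work.

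If you average the comparison inequality over reference points $p\in P\subset B_i$, the left side keeps the source-block terms $\eta_t(1-L\eta_t)(\delta_t-\delta_p)$ with $t\ge p$, $t\in B_i$. These have no sign, and averaging does not make them cancel. With uniform weights and the constant step on $B_i$, the net coefficient of $\delta_t$ is proportional to $2t-\min B_i-\max B_i$, which is negative on the first half of the block. So these terms can neither be dropped nor turned into a factor close to one.

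Your account of the constant $\mathrm e$ is also wrong. A factor of the form $1+c/|B_{i+1}|$ cannot produce $\mathrm e$. We have $|B_m|=1$ and, for $m\ge2$, $|B_{m-1}|\le2$, so the last two factors alone give $(1+c)(1+c/2)$, which equals $3$ for $c=1$. For $T=3$ one has $B_1=\{2\}$, $B_2=\{3\}$ and $\prod_i(1+1/|B_{i+1}|)=4$. If instead $k$ ranges over $1,\ldots,N$, then $\prod_{k\le N}(1+1/k)=N+1$ is unbounded. A multiplicative loss that does not vanish as $L\eta\to0$ is a symptom of the unsigned source terms.

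The missing idea is to take the reference point $p=\tau_i$ to be a minimizer of $\delta_t$ over $B_i$. This is a legitimate deterministic choice, because the $\delta_t$ are expectations rather than sample-path values. Every source-block difference $\delta_t-\delta_{\tau_i}$ is then nonnegative and can simply be discarded. After the absorption you already describe (writing $\delta_t=(\delta_t-\delta_p)+\delta_p$ on the right), the comparison reads
\[
 2\sum_{t=p}^{q}\eta_t(1-L\eta_t)(\delta_t-\delta_p)\le(2L\delta_p+S^2)\sum_{t=p}^{q}\eta_t^2 .
\]
Take $q=\max B_{i+1}$. The window has at most $4|B_{i+1}|$ indices, its largest step is $\eta2^{-i}$, the target step is $\eta2^{-(i+1)}$, and $1-L\eta_t\ge1/2$. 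This yields
\[
 \min_{t\in B_{i+1}}\delta_t\le\bar\delta_{i+1}\le(1+2Lc_i)\min_{t\in B_i}\delta_t+c_iS^2,\qquad c_i=8\eta2^{-i}.
\]
The multiplicative factors depend on $L\eta2^{-i}$, not on block lengths. Since $\sum_ic_i\le16\eta$, you get $\prod_i(1+2Lc_i)\le\exp(32L\eta)\le\mathrm e$; this, through $\eta\le1/(32L)$, is where $\mathrm e$ actually comes from. The accumulated noise is at most $16\mathrm e\eta S^2$. Combined with $\mathrm e$ times your $B_0$ bound, every tail-block mean is at most $4\mathrm eR^2/(\eta(T+1))+18\mathrm e\eta S^2$. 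This gives the bound for $\delta_T$ directly, and with your Step 3 it also gives the bound for the uniform mean.
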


\begin{proof}
Use $D_t=\E\norm{y_t-x^*}^2$ and
$h_t=\nabla F(y_t)+\nu_t$. Note that \eqref{eq:convex-moment}--\eqref{eq:convex-distance}
apply. Put $M=\lceil (T+1)/4\rceil$ and $K=\lceil (T+1)/2\rceil$.
The first schedule block is $0,\ldots,K-1$, with
$\eta_t=\eta$.

\emph{A reference value in the first half.}
Discard the nonpositive term $-\eta\delta_t$ in
\eqref{eq:convex-distance} for $0\le t<M-1$ and sum. This yields
\[
 D_{M-1}\le D_0+(M-1)\eta^2S^2
                   =R^2+(M-1)\eta^2S^2.
\]
Now keep that decrease and sum from $t=M-1$ to $K-1$:
\begin{align*}
 \eta\sum_{t=M-1}^{K-1}\delta_t
 &\le D_{M-1}-D_K+(K-M+1)\eta^2S^2\\
 &\le R^2+\bigl((M-1)+(K-M+1)\bigr)\eta^2S^2\\
 &=R^2+K\eta^2S^2.
\end{align*}
Let $A_0$ be the minimum of $\delta_t$ over these $K-M+1$
indices. A minimum is at most the mean, so
\begin{equation}
 A_0\le\frac{R^2}{(K-M+1)\eta}
                   +\frac{K}{K-M+1}\eta S^2
       \le\frac{4R^2}{\eta (T+1)}+2\eta S^2.
 \label{eq:convex-first-reference}
\end{equation}
The last line uses $K-M+1\ge (T+1)/4$ and
$K/(K-M+1)\le2$ from Lemma~\ref{lem:rounded-geometry}.
We will also need the mean over the entire first block.
Summing directly from $0$ to $K-1$ gives
\begin{equation}
 \frac1K\sum_{t=0}^{K-1}\delta_t
 \le\frac{R^2}{K\eta}+\eta S^2
 \le\frac{2R^2}{\eta (T+1)}+\eta S^2,
 \label{eq:convex-first-mean}
\end{equation}
where $K\ge (T+1)/2$.

\emph{Comparison with an earlier point.}
Fix deterministic indices $p\le q$. For every $p\le t\le q$,
expand the squared distance from $y_p$:
\[
 \norm{y_{t+1}-y_p}^2
 =\norm{y_t-y_p}^2-2\eta_t\inner{h_t}{y_t-y_p}
                                      +\eta_t^2\norm{h_t}^2.
\]
Although $y_p$ is random, it is $\mathcal F_t$-measurable when
$t\ge p$. Conditional centering therefore gives
\begin{align*}
 \E\inner{h_t}{y_t-y_p}
 &=\E\inner{\nabla F(y_t)}{y_t-y_p}\\
 &\ge\E[F(y_t)-F(y_p)]=\delta_t-\delta_p.
\end{align*}
The inequality follows by rearranging
$F(y_p)\ge F(y_t)+\inner{\nabla F(y_t)}{y_p-y_t}$.
After taking expectations and summing the distance identity,
the initial distance is zero and the terminal distance is nonnegative:
\begin{align*}
 2\sum_{t=p}^q\eta_t(\delta_t-\delta_p)
 &\le-\E\norm{y_{q+1}-y_p}^2
                               +\sum_{t=p}^q\eta_t^2\E\norm{h_t}^2\\
 &\le2L\sum_{t=p}^q\eta_t^2\delta_t+S^2\sum_{t=p}^q\eta_t^2.
\end{align*}
The right side still depends on the function errors. Write
$\delta_t=(\delta_t-\delta_p)+\delta_p$ there, and move the
first part to the left. This is an exact rearrangement, giving
\begin{equation}
 2\sum_{t=p}^q\eta_t(1-L\eta_t)(\delta_t-\delta_p)
 \le(2L\delta_p+S^2)\sum_{t=p}^q\eta_t^2.
 \label{eq:convex-reference}
\end{equation}
The differences $\delta_t-\delta_p$ need not be nonnegative.

\emph{Comparison of consecutive blocks.}
Use the comparison blocks \eqref{eq:comparison-blocks}.
For each $i$, choose a deterministic minimizer $\tau_i$ of
$\delta_t$ on $B_i$ and set $A_i=\delta_{\tau_i}$.
These are indices minimizing expected errors, not data-dependent
choices made by the algorithm. Write
$\bar\delta_i=|B_i|^{-1}\sum_{t\in B_i}\delta_t$ for a block mean.
In particular, $A_i\le\bar\delta_i$ and
$A_m=\bar\delta_m=\delta_T$ because $B_m=\{T\}$.

In \eqref{eq:convex-reference}, take $p=\tau_i$ and
$q=\max B_{i+1}$ for $0\le i<m$. On the remaining part of
$B_i$, each difference $\delta_t-A_i$ is nonnegative.
Its coefficient $2\eta_t(1-L\eta_t)$ is positive, so discarding
this source part can only decrease the left side.
Throughout the target block $B_{i+1}$ the stepsize equals
$\eta2^{-(i+1)}$. The remaining inequality is therefore
\begin{align*}
 &2\eta2^{-(i+1)}(1-L\eta2^{-(i+1)})|B_{i+1}|
                              (\bar\delta_{i+1}-A_i)\le(2LA_i+S^2)
                      \sum_{t=\tau_i}^{\max B_{i+1}}\eta_t^2.
\end{align*}
Every comparison span has at most $4|B_{i+1}|$ indices by
\eqref{eq:rounded-block-ratios}. Its largest stepsize is
at most $\eta2^{-i}$. Moreover,
$1-L\eta2^{-(i+1)}\ge1/2$. Thus the ratio of the sum of squared
stepsizes to the coefficient of $\bar\delta_{i+1}-A_i$ is at most
\[
 \frac{4|B_{i+1}|\eta^2 2^{-2i}}
      {|B_{i+1}|\eta2^{-(i+1)}}
 =8\eta2^{-i}=:c_i.
\]
Since $2LA_i+S^2\ge0$, it follows that
\begin{equation}
 A_{i+1}\le\bar\delta_{i+1}
            \le(1+2Lc_i)A_i+c_iS^2,
 \qquad c_i=8\eta2^{-i}.
 \label{eq:convex-block-recursion}
\end{equation}
The mean difference $\bar\delta_{i+1}-A_i$ itself may be negative;
division by its positive coefficient remains valid in that case.
The length bounds used here include $T=1$ and $T=2$, whose blocks
are respectively $B_0=\{0\}, B_1=\{1\}$ and
$B_0=\{0,1\}, B_1=\{2\}$.

\emph{Expansion of the scalar recursion.}
The first two substitutions in \eqref{eq:convex-block-recursion}
(the second when $m\ge2$) are
\begin{align*}
 A_1&\le(1+2Lc_0)A_0+c_0S^2,\\
 A_2&\le(1+2Lc_1)(1+2Lc_0)A_0
                  +\bigl(c_0(1+2Lc_1)+c_1\bigr)S^2.
\end{align*}
More generally, induction gives, for $0\le i<m$,
\begin{equation}
 \bar\delta_{i+1}
 \le A_0\prod_{j=0}^{i}(1+2Lc_j)
       +S^2\sum_{j=0}^{i}c_j
                    \prod_{\ell=j+1}^{i}(1+2Lc_\ell).
 \label{eq:convex-block-expanded}
\end{equation}
To see the induction step, use the same formula with index $i-1$
to bound $A_i\le\bar\delta_i$, multiply it by $1+2Lc_i$, and
add $c_iS^2$. This extends every existing product by its last
factor and adds the $j=i$ summand, whose empty product equals one.

The coefficients have a geometric sum:
\[
 \sum_{j=0}^{m-1}c_j=8\eta\sum_{j=0}^{m-1}2^{-j}\le16\eta.
\]
For any product occurring in \eqref{eq:convex-block-expanded}, use
$1+u\le\exp(u)$ for $u\ge0$ to obtain
\[
 \prod(1+2Lc_j)
 \le\exp\left(2L\sum_{j=0}^{m-1}c_j\right)
 \le\exp(32L\eta)\le\mathrm e.
\]
Hence every tail-block mean satisfies
\begin{align}
 \bar\delta_{i+1}
 &\le\mathrm e A_0+16\mathrm e\eta S^2\nonumber\\
 &\le\frac{4\mathrm e R^2}{\eta (T+1)}
                              +18\mathrm e\eta S^2.
 \label{eq:convex-each-block}
\end{align}
The second line inserts \eqref{eq:convex-first-reference}.
The coefficients $c_j$ form a geometric sequence, so neither term
incurs a factor proportional to the number of blocks.

\emph{The last iterate and the uniform mean.}
The last block is a singleton, so \eqref{eq:convex-each-block}
with $i+1=m$ is already the desired last-point inequality.
For the full mean, partition $0,\ldots,T$ into the entire
first schedule block and $B_1,\ldots,B_m$:
\[
 \frac1{T+1}\sum_{t=0}^T\delta_t
 =\frac K{T+1}\left(\frac1K\sum_{t=0}^{K-1}\delta_t\right)
       +\sum_{i=1}^m\frac{|B_i|}{T+1}\bar\delta_i.
\]
The coefficients on the right sum to one. The first mean is
bounded by \eqref{eq:convex-first-mean}, which is no larger than
the right side of \eqref{eq:convex-each-block}; every remaining
mean is bounded by \eqref{eq:convex-each-block} itself.
Their convex combination obeys the same bound. This completes
the proof of \eqref{eq:convex-companion-last}.
\end{proof}

\subsection{Proof of the convex theorem}
\begin{proof}[Proof of Theorem~\ref{thm:convex}]
Lemma~\ref{lem:companion} gives
$S^2\le9\sigma^2/4$ and a companion to which
Lemma~\ref{lem:convex-blocks} applies.
Because $\eta_t\le\eta$ and $\beta=1/(T+2)$, for every $0\le t\le T$,
\begin{equation}
 \E\norm{x_t-y_t}^2
 \le3\beta^2\sigma^2(t\eta)^2\le3\sigma^2\eta^2.
 \label{eq:convex-uniform-coupling}
\end{equation}

Transfer the companion's last-point guarantee using
\eqref{eq:convex-transfer-app} and \eqref{eq:convex-uniform-coupling}:
\begin{align*}
 \E[F(x_T)-F^*]
 &\le2\delta_T+3L\sigma^2\eta^2\\
 &\le\frac{8\mathrm eR^2}{\eta (T+1)}
                       +36\mathrm e\eta S^2+3L\sigma^2\eta^2\\
 &\le\frac{8\mathrm eR^2}{\eta (T+1)}
                       +(81\mathrm e+3/32)\eta\sigma^2\\
 &\le\frac{8\mathrm eR^2}{\eta (T+1)}+82\mathrm e\eta\sigma^2.
\end{align*}
Here $36(9/4)=81$ and $L\eta\le1/32$.
For the uniform mean, convexity gives
\[
 \E[F(\bar x_T)-F^*]
 \le\frac1{T+1}\sum_{t=0}^T\E[F(x_t)-F^*]
 \le\frac2{T+1}\sum_{t=0}^T\delta_t+3L\sigma^2\eta^2.
\]
Use the mean part of \eqref{eq:convex-companion-last} and the
same numerical calculation. This proves the last-point and uniform-mean bound.

\emph{The stepsize-weighted mean.}
Use Theorem~\ref{thm:convex-mean} directly, and put $K=\lceil (T+1)/2\rceil$.
The first $K$ weights
equal $\eta$, every weight is at most $\eta$, and $K\ge (T+1)/2$.
Therefore
\[
 \Lambda_T\ge K\eta\ge (T+1)\eta/2,\qquad
 \sum_{t=0}^T\eta_t^2\le\eta\Lambda_T,\qquad
 \frac{\Lambda_T}{T+2}\le\eta.
\]
Insert these three inequalities into
\eqref{eq:convex-weighted-general}:
\begin{align*}
 \E[F(\bar x_T^\eta)-F^*]
 &\le\frac{4R^2}{\eta (T+1)}+\frac92\eta\sigma^2
                                      +3L\eta^2\sigma^2\\
 &\le\frac{4R^2}{\eta (T+1)}
                          +\left(\frac92+\frac3{32}\right)\eta\sigma^2
 \le\frac{4R^2}{\eta (T+1)}+5\eta\sigma^2.
\end{align*}
This gives the weighted-mean bound. Its right side is no larger than
the common bound for the last iterate and uniform mean, so that
bound applies to all three outputs.

\emph{Choice of the base stepsize.}
For the stepsize choice in Theorem~\ref{thm:convex},
\[
 \frac1\eta
 =\max\left\{32L,\frac{\sqrt{41}\sigma\sqrt{T+1}}{2R}\right\}
 \le32L+\frac{\sqrt{41}\sigma\sqrt{T+1}}{2R},\qquad
 \eta\le\frac{2R}{\sqrt{41}\sigma\sqrt{T+1}}.
\]
Thus the two terms of the common bound satisfy
\begin{align*}
 \frac{8\mathrm eR^2}{\eta (T+1)}
 &\le\frac{256\mathrm eLR^2}{T+1}
                      +\frac{4\sqrt{41}\,\mathrm e\sigma R}{\sqrt{T+1}},\\
 82\mathrm e\eta\sigma^2
 &\le\frac{164\mathrm e\sigma R}{\sqrt{41}\sqrt{T+1}}
       =\frac{4\sqrt{41}\,\mathrm e\sigma R}{\sqrt{T+1}}.
\end{align*}
Adding proves the tuned bound.
If $\sigma=0$, choose $\eta=1/(32L)$ and obtain the first term
of the tuned bound. If $R=0$, the separate zero-step
choice returning $x_0=x^*$ gives zero error.
\end{proof}

\section{Proof of Theorems~\ref{thm:strong-mean-main} and~\ref{thm:strong}}
\label{app:strongly-convex}
\subsection{The linearly weighted mean}
\begin{proof}[Proof of Theorem~\ref{thm:strong-mean-main}]
Use the parameters in \eqref{eq:sc-mean-schedule}, and write
$W_T=\sum_{t=0}^T(t+s)=(T+1)(T+2s)/2$.
The choices satisfy $\beta\le1/2$ and $\eta_t\le1/(2L)$, so the companion
lemma applies. For the remainder of this proof write
\[
 D_t=\E\norm{y_t-x^*}^2,\qquad
 \delta_t=\E[F(y_t)-F^*].
\]

\emph{The one-step inequality.}
Expand the squared distance in \eqref{eq:companion-app}. The cross terms
containing $\nu_t$ vanish after conditioning on $\mathcal F_t$, giving
\begin{align*}
 D_{t+1}
 &\le D_t-2\eta_t\E\inner{\nabla F(y_t)}{y_t-x^*}
       +\eta_t^2\E\norm{\nabla F(y_t)}^2+\eta_t^2S^2\\
 &\le(1-\lambda\eta_t)D_t
       -2\eta_t(1-L\eta_t)\delta_t+\eta_t^2S^2\\
 &\le(1-\lambda\eta_t)D_t-\eta_t\delta_t+\eta_t^2S^2.
\end{align*}
The second line uses
$\inner{\nabla F(y_t)}{y_t-x^*}\ge F(y_t)-F^*
+\lambda\norm{y_t-x^*}^2/2$, together with \eqref{eq:self-bound-main}. The third line
uses $L\eta_t\le1/2$. Rearranging and substituting the stepsize gives
\begin{equation}
 \delta_t\le\frac\lambda2(t+s-1)D_t
            -\frac\lambda2(t+s+1)D_{t+1}
            +\frac{2S^2}{\lambda(t+s+1)}.
 \label{eq:average-one-step}
\end{equation}

\emph{Summation with increasing weights.}
Multiply \eqref{eq:average-one-step} by $t+s$:
\begin{align*}
 (t+s)\delta_t
 \le\frac\lambda2\bigl((t+s)(t+s-1)D_t
                  -(t+s)(t+s+1)D_{t+1}\bigr)+\frac{2S^2}{\lambda}\frac{t+s}{t+s+1}.
\end{align*}
At $t=0$, the distance terms are
$s(s-1)D_0-s(s+1)D_1$. At $t=1$, they are
$s(s+1)D_1-(s+1)(s+2)D_2$. Thus the $D_1$ terms cancel,
and the same cancellation occurs at every interior index. Summing gives
\begin{align*}
 \sum_{t=0}^T(t+s)\delta_t
 &\le\frac\lambda2\bigl(s(s-1)D_0
                 -(T+s)(T+s+1)D_{T+1}\bigr)
       +\frac{2S^2}{\lambda}\sum_{t=0}^T\frac{t+s}{t+s+1}\\
 &\le\frac{\lambda s(s-1)R^2}{2}+\frac{2(T+1)S^2}{\lambda}.
\end{align*}
Here $D_{T+1}\ge0$, every fraction is at most one, and $D_0=R^2$.

\emph{Transfer to the STORM iterates.}
Convexity and \eqref{eq:expected-transfer-app} imply
\begin{align*}
 \E[F(\bar x_T)-F^*]
 &\le\frac1{W_T}\sum_{t=0}^T(t+s)\E[F(x_t)-F^*]\\
 &\le\frac2{W_T}\sum_{t=0}^T(t+s)\delta_t
                       +\frac{3L\beta^2\sigma^2}{\lambda^2}\\
 &\le\frac{\lambda s(s-1)R^2}{W_T}
           +\frac{4(T+1)S^2}{\lambda W_T}
           +\frac{3L\beta^2\sigma^2}{\lambda^2}.
\end{align*}
Substitute $W_T=(T+1)(T+2s)/2$, $S^2\le9\sigma^2/4$, and
$\beta=1/(T+s+1)$ to obtain the sharper three-term bound
\begin{equation}
 \E[F(\bar x_T)-F^*]
 \le\frac{2\lambda s(s-1)R^2}{(T+1)(T+2s)}
       +\frac{18\sigma^2}{\lambda(T+2s)}
       +\frac{3\kappa\sigma^2}{\lambda(T+s+1)^2}.
 \label{eq:sc-mean-exact-app}
\end{equation}
For the simpler bound, use $s\ge4\kappa\ge4$:
\begin{align*}
 \frac{18\sigma^2}{\lambda(T+2s)}
 +\frac{3\kappa\sigma^2}{\lambda(T+s+1)^2}
 &\le\left(18+\frac{3\kappa}{T+s+1}\right)
                    \frac{\sigma^2}{\lambda(T+s+1)}\\
 &\le\left(18+\frac34\right)\frac{\sigma^2}{\lambda(T+s+1)}
 \le\frac{19\sigma^2}{\lambda(T+s+1)}.
\end{align*}
If $J$ is independent of the run with
$\Pr(J=t)=(t+s)/W_T$, the weighted average of expected STORM function gaps equals
$\E[F(x_J)-F^*]$. The same calculation therefore proves the
random-output assertion.
\end{proof}

\subsection{A block comparison for an SGD companion}
\begin{lemma}[From an earlier block to the last point]
\label{lem:blocks}
Let $F$ be differentiable and convex, and consider
\[
 y_{t+1}=y_t-\eta_t h_t,\qquad
 \E[h_t\mid\mathcal F_t]=\nabla F(y_t),
\]
where $(\mathcal F_t)$ is a filtration and $y_t$ is
$\mathcal F_t$-measurable. Let $T\ge1$ be an integer and set
\[
 M=\lceil (T+1)/4\rceil,\qquad K=\lceil (T+1)/2\rceil,\qquad
 \gamma_t=\frac{c}{t+s+1},\qquad c>0,\ s\ge0.
\]
Use the boundaries in \eqref{eq:rounded-boundaries} and the stepsizes
$\eta_t=2^{-i}\gamma_t$ for $U_i\le t<U_{i+1}$.
Assume the displayed function-value expectations are finite.
If $\E\norm{h_t}^2\le G^2$ for $M-1\le t\le T$, then
\begin{equation}
 \E F(y_T)
 \le\min_{M-1\le t\le K-1}\E F(y_t)+64G^2\gamma_T.
 \label{eq:block-result}
\end{equation}
It is enough to introduce $y_{T+1}$ virtually; the output is $y_T$.
\end{lemma}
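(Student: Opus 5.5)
I will mimic the block comparison used in the proof of Lemma~\ref{lem:convex-blocks}, but with a bounded second moment $G^2$ in place of the self-bounding estimate. The only tools needed are the reference-point expansion and the block geometry of Lemma~\ref{lem:rounded-geometry}.

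\emph{Reference-point inequality.} Fix deterministic $p\le q$ and expand $\norm{y_{t+1}-y_p}^2$ for $p\le t\le q$. Since $y_p$ is $\mathcal F_t$-measurable, conditional unbiasedness and convexity give $\E\inner{h_t}{y_t-y_p}\ge \E F(y_t)-\E F(y_p)$. Summing, discarding the nonnegative terminal distance, and using $\E\norm{h_t}^2\le G^2$ yields
\[
 2\sum_{t=p}^{q}\eta_t\bigl(\E F(y_t)-\E F(y_p)\bigr)\le G^2\sum_{t=p}^q\eta_t^2 .
\]
Unlike \eqref{eq:convex-reference}, no function-error term appears on the right, so there is no multiplicative factor $1+2Lc_i$ and no stepsize restriction.

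\emph{Block recursion.} Use the comparison blocks $B_0,\dots,B_m$ of \eqref{eq:comparison-blocks}. Let $A_i=\min_{t\in B_i}\E F(y_t)$, attained at a deterministic $\tau_i$, and let $\bar\delta_i$ denote the block mean of $\E F(y_t)$. Take $p=\tau_i$ and $q=\max B_{i+1}$, and drop the nonnegative source-block differences. The target block has stepsizes $2^{-(i+1)}\gamma_t$, and since $\gamma_t$ is decreasing these are at least $2^{-(i+1)}\gamma_{\max B_{i+1}}$. The span contains at most $4|B_{i+1}|$ indices with stepsizes at most $2^{-i}\gamma_{\tau_i}$. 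This gives
\[
 \bar\delta_{i+1}-A_i\le \frac{4|B_{i+1}|\,2^{-2i}\gamma_{\tau_i}^2G^2}{2\cdot 2^{-(i+1)}\gamma_{\max B_{i+1}}|B_{i+1}|}
 =4\cdot 2^{-i}G^2\,\frac{\gamma_{\tau_i}^2}{\gamma_{\max B_{i+1}}}.
\]
Since $A_{i+1}\le\bar\delta_{i+1}$, telescoping from $i=0$ to $m-1$ gives $\E F(y_T)=A_m\le A_0+\sum_i(\cdots)$, and $A_0$ is exactly the minimum in \eqref{eq:block-result}.

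\emph{Main obstacle: harmonic factors.} It remains to show $\sum_{i<m}4\cdot2^{-i}\gamma_{\tau_i}^2/\gamma_{\max B_{i+1}}\le 64\gamma_T$. This is the step needing care. The plan is to bound $\gamma_{\tau_i}\le 2\gamma_T$ for every $i$. This holds because $\tau_i\ge M-1\ge (T+1)/4-1$, so $\tau_i+s+1\ge (T+s+1)/4$. That estimate alone gives $\gamma_{\tau_i}\le4\gamma_T$, so the constants must be tracked as $\gamma_{\tau_i}\le 4\gamma_T$ and $\gamma_{\max B_{i+1}}\ge\gamma_T$. Each term is then at most $4\cdot2^{-i}\cdot16\gamma_T^2/\gamma_T=64\cdot2^{-i}\gamma_T$, which sums to $128\gamma_T$, a factor of two too large.

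To recover the stated $64$, I will use the block geometry more precisely. For $i\ge1$ the source block lies in $t\ge K\ge (T+1)/2$, so $\gamma_{\tau_i}\le 2\gamma_T$ and those terms contribute at most $16\cdot2^{-i}\gamma_T$. For $i=0$ I will use $|B_0|\le2|B_1|$, which makes the span at most $3|B_1|$. If the constant still falls short, I will replace $4|B_{i+1}|$ by the exact span counts from the table in Lemma~\ref{lem:rounded-geometry}. Whether the constant closes at exactly $64$ is where I expect to spend effort; the order $O(G^2\gamma_T)$ is clear.
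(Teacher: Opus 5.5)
Your proposal follows the paper's proof closely. It uses the same reference-point inequality with $G^2$ on the right, the same comparison with deterministic minimizers $p=\tau_i$ and $q=\max B_{i+1}$, and the same two refinements. There is one step to repair, and the constant question is already settled by what you wrote.

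On the constant: your refinements close it at $64$ exactly as in the paper. For $i=0$, the span has at most $3|B_1|$ indices, every stepsize is at most $\gamma_{M-1}\le4\gamma_T$, and every target stepsize is at least $\gamma_T/2$; this gives $48G^2\gamma_T$. For $1\le i<m$, you get $16\cdot2^{-i}G^2\gamma_T$. Since $48+16\sum_{i=1}^{m-1}2^{-i}\le64$, you do not need the exact span counts.

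The step that needs repair is the passage through the uniform block mean $\bar\delta_{i+1}$. You divide the weighted target sum $\sum_{t\in B_{i+1}}\eta_t(\E F(y_t)-A_i)$ by $|B_{i+1}|$ times the smallest target stepsize. In Lemma~\ref{lem:convex-blocks} the stepsize is constant on each block, so this is harmless there. Here the stepsizes $2^{-(i+1)}\gamma_t$ vary within $B_{i+1}$, and the differences $\E F(y_t)-A_i$ can be negative. So $\min_t\eta_t\sum_{t}(\E F(y_t)-A_i)\le\sum_t\eta_t(\E F(y_t)-A_i)$ does not follow: a large positive difference at a small weight and a negative one at a large weight break it.

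The paper avoids this by bounding each target difference below by $A_{i+1}-A_i$, which it calls $H_i$. The left side is then at least $2(A_{i+1}-A_i)\sum_{t\in B_{i+1}}\eta_t$ whatever the sign. Dividing by this positive weight, and lower-bounding it by $|B_{i+1}|$ times the minimal target stepsize, is valid because the right side is positive. Your telescoping uses only $A_{i+1}\le\cdots$, so this substitution fixes the argument with unchanged constants. Using the stepsize-weighted target mean instead would also work.
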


\begin{proof}
Since $T\ge1$, we have $m\ge1$. We use the geometry proved in
Lemma~\ref{lem:rounded-geometry}:
$M\ge (T+1)/4$, $K\le2M$, and $n=K-M+1\ge (T+1)/4$.

\emph{Comparison with an earlier point.}
Fix deterministic indices $M-1\le p\le q\le T$.
For every $p\le t\le q$, expand the update around $y_p$:
\[
 \norm{y_{t+1}-y_p}^2
 =\norm{y_t-y_p}^2-2\eta_t\inner{h_t}{y_t-y_p}
                       +\eta_t^2\norm{h_t}^2.
\]
Although $y_p$ is random, it is $\mathcal F_t$-measurable because
$p\le t$. The same is true of $y_t$. Consequently,
\begin{align*}
 \E[\inner{h_t}{y_t-y_p}\mid\mathcal F_t]
 &=\inner{\E[h_t\mid\mathcal F_t]}{y_t-y_p}\\
 &=\inner{\nabla F(y_t)}{y_t-y_p}\\
 &\ge F(y_t)-F(y_p).
\end{align*}
The last inequality is convexity applied with expansion point $y_t$:
\[
 F(y_p)\ge F(y_t)+\inner{\nabla F(y_t)}{y_p-y_t}.
\]
Taking expectations in the distance identity therefore gives
\[
 2\eta_t\bigl(\E F(y_t)-\E F(y_p)\bigr)
 \le \E\norm{y_t-y_p}^2-\E\norm{y_{t+1}-y_p}^2
                +\eta_t^2G^2.
\]
Summing from $t=p$ to $q$ cancels intermediate distance.
The first distance is $\norm{y_p-y_p}^2=0$, so
\begin{align*}
 2\sum_{t=p}^q\eta_t\bigl(\E F(y_t)-\E F(y_p)\bigr)
 \le-\E\norm{y_{q+1}-y_p}^2
                          +G^2\sum_{t=p}^q\eta_t^2\le G^2\sum_{t=p}^q\eta_t^2.
\end{align*}
We will use the resulting inequality in the form
\begin{equation}
 2\sum_{t=p}^q\eta_t\bigl(\E F(y_t)-\E F(y_p)\bigr)
 \le G^2\sum_{t=p}^q\eta_t^2.
 \label{eq:reference-comparison}
\end{equation}
All distances in this calculation have finite expectations: indeed,
$y_t-y_p=-\sum_{j=p}^{t-1}\eta_jh_j$ is a finite sum of random vectors
with finite second moments.

\emph{Comparison of adjacent blocks.}
Use the comparison blocks \eqref{eq:comparison-blocks}. They are
consecutive from $M-1$ through $T$, and $B_m=\{T\}$.
Choose index $\tau_i\in B_i$ minimizing
$\E F(y_t)$ on $B_i$, and write
\[
 H_i=\E F(y_{\tau_{i+1}})-\E F(y_{\tau_i})
 \qquad(0\le i<m).
\]
The index $\tau_i$ is deterministic because the quantities being
minimized are expectations, not observed sample-path values.

Apply \eqref{eq:reference-comparison} with
$p=\tau_i$ and $q=\max B_{i+1}$. The left sum consists of the
remaining part of the source block $B_i$ and all of the target block
$B_{i+1}$. On the source part, the choice of $\tau_i$ gives
\[
 \E F(y_t)-\E F(y_{\tau_i})\ge0.
\]
On the target block, the choice of $\tau_{i+1}$ gives
\[
 \E F(y_t)-\E F(y_{\tau_i})
 \ge\E F(y_{\tau_{i+1}})-\E F(y_{\tau_i})=H_i.
\]
All stepsizes are positive. Thus the whole left side of
\eqref{eq:reference-comparison} is at least
$2H_i\sum_{t\in B_{i+1}}\eta_t$. Dividing by this positive
weight sum gives
\begin{equation}
 H_i\le
 \frac{G^2\sum_{t=\tau_i}^{\max B_{i+1}}\eta_t^2}
      {2\sum_{t\in B_{i+1}}\eta_t}.
 \label{eq:adjacent-blocks}
\end{equation}
This reasoning is valid whether $H_i$ is positive, zero, or negative.

\emph{The effect of rounded block lengths.}
First consider $i=0$. By \eqref{eq:rounded-block-ratios}, the numerator
in \eqref{eq:adjacent-blocks} contains at most $3|B_1|$ terms.
Furthermore, $T+1\le4M$ and $s\ge0$ give
\[
 \eta_t\le\gamma_{M-1}=\frac{c}{M+s}
                    \le4\gamma_T
 \quad\text{throughout the first comparison span}.
\]
On $B_1$, the multiplier is $1/2$, and $t\le T$ implies
$\gamma_t\ge\gamma_T$. Each target stepsize is therefore
at least $\gamma_T/2$. Substituting these upper and lower
bounds into \eqref{eq:adjacent-blocks} yields
\[
 H_0\le
 \frac{3|B_1|G^2(4\gamma_T)^2}
      {2|B_1|(\gamma_T/2)}
 =48G^2\gamma_T.
\]

Next take $1\le i<m$. By \eqref{eq:rounded-block-ratios}, the
comparison span contains at most $4|B_{i+1}|$ indices.
All its indices satisfy $t\ge U_1=K\ge (T+1)/2$, whence
\[
 \gamma_t=\frac{c}{t+s+1}\le2\gamma_T.
\]
The largest multiplier in this span is $2^{-i}$, so its largest
stepsize is at most $2^{1-i}\gamma_T$.
On the target block the multiplier is $2^{-i-1}$, and every
stepsize is at least $2^{-i-1}\gamma_T$. Consequently,
\[
 H_i\le
 \frac{4|B_{i+1}|G^2(2^{1-i}\gamma_T)^2}
      {2|B_{i+1}|2^{-i-1}\gamma_T}
 =16\,2^{-i}G^2\gamma_T.
\]
When $m=1$, there are no comparisons of this second type.

\emph{Summation over the blocks.}
Since $\tau_m=T$, consecutive differences telescope:
\begin{align*}
 \E F(y_T)-\E F(y_{\tau_0})
 &=\sum_{i=0}^{m-1}H_i\\
 &\le\left(48+16\sum_{i=1}^{m-1}2^{-i}\right)
                                      G^2\gamma_T\\
 &\le64G^2\gamma_T.
\end{align*}
The defining property of $\tau_0$ now proves
\eqref{eq:block-result}. The last comparison uses the update at
$t=T$ only to produce the nonnegative terminal distance
$\E\norm{y_{T+1}-y_{\tau_{m-1}}}^2$, which is discarded.
The point $y_{T+1}$ need not be returned or computed by the algorithm.
\end{proof}

\subsection{Proof of the last-iterate theorem}
\begin{proof}[Proof of Theorem~\ref{thm:strong}]
Apply Lemma~\ref{lem:companion}. In particular, the companion
starts at $y_0=x_0$ and has centered innovations satisfying
\[
 \E[\nu_t\mid\mathcal F_t]=0,\qquad
 \E[\norm{\nu_t}^2\mid\mathcal F_t]\le S^2,
 \qquad S^2=(1+\beta)^2\sigma^2\le\frac94\sigma^2.
\]
Write
\[
 D_t=\E\norm{y_t-x^*}^2,\qquad
 \delta_t=\E[F(y_t)-F^*],\qquad
 \gamma_t=\frac2{\lambda(t+s+1)},
\]
and let $M=\lceil (T+1)/4\rceil$, $K=\lceil (T+1)/2\rceil$.
The schedule \eqref{eq:sctaper} satisfies $\eta_t=\gamma_t$ for $0\le t<K$,
and $\eta_t\le\gamma_t$ everywhere. Since
$s\ge4\kappa^2\ge4\kappa$, every actual and virtual stepsize
is at most $1/(2L)$.

\emph{Distance decrease in the first half.}
Expand one companion update around $x^*$:
\[
 y_{t+1}-x^*
 =y_t-x^*-\eta_t\nabla F(y_t)-\eta_t\nu_t.
\]
The first three terms are $\mathcal F_t$-measurable, so the
cross term with $\nu_t$ vanishes after taking expectation.
Gradient-map contraction and $\nabla F(x^*)=0$ then imply
\begin{align}
 D_{t+1}
 &\le(1-\lambda\eta_t)^2D_t+\eta_t^2S^2\nonumber\\
 &\le(1-\lambda\eta_t)D_t+\eta_t^2S^2.
 \label{eq:last-distance}
\end{align}
The second line follows from $0\le\lambda\eta_t\le1$.
During the first block, where $\eta_t=\gamma_t$, we claim that
\begin{equation}
 D_t\le\frac{(s+1)^2R^2}{(t+s+1)^2}
                  +\frac{4S^2}{\lambda^2(t+s+1)},\qquad 0\le t\le K.
 \label{eq:burn-in}
\end{equation}
At $t=0$, the first term equals $D_0=R^2$ and the second is
nonnegative. For the induction step, put $u=t+s+1$.
Here $u\ge5$, so $1-2/u\ge0$ and substitution of the induction
upper bound into \eqref{eq:last-distance} preserves the inequality.
Since $\eta_t=2/(\lambda u)$, this gives
\begin{align*}
 D_{t+1}
 &\le\left(1-\frac2u\right)
       \left(\frac{(s+1)^2R^2}{u^2}
                    +\frac{4S^2}{\lambda^2u}\right)
                    +\frac{4S^2}{\lambda^2u^2}\\
 &=\frac{(s+1)^2R^2(u-2)}{u^3}
                    +\frac{4S^2(u-1)}{\lambda^2u^2}\\
 &\le\frac{(s+1)^2R^2}{(u+1)^2}
                    +\frac{4S^2}{\lambda^2(u+1)}.
\end{align*}
The last line follows term by term from
\[
 (u-2)(u+1)^2=u^3-3u-2\le u^3,
 \qquad (u-1)(u+1)=u^2-1\le u^2.
\]
This proves \eqref{eq:burn-in}.

\emph{Second moments of the search directions.}
For any $t\ge M-1$, discard contraction in
\eqref{eq:last-distance} and sum from $M-1$ to $t-1$:
\[
 D_t\le D_{M-1}+S^2\sum_{j=M-1}^{t-1}\eta_j^2
       \le D_{M-1}+S^2\sum_{j=M-1}^{\infty}\gamma_j^2.
\]
The infinite sum is only an upper bound on a finite sum; it does
not introduce additional algorithmic iterations. By monotonicity
of the summand and $M+s\ge1$,
\begin{align*}
 \sum_{j=M-1}^{\infty}\gamma_j^2
 &=\frac4{\lambda^2}\sum_{k=M}^{\infty}\frac1{(k+s)^2}\\
 &\le\frac4{\lambda^2}
       \left(\frac1{(M+s)^2}
                         +\int_M^\infty\frac{du}{(u+s)^2}\right)\\
 &=\frac4{\lambda^2}\left(\frac1{(M+s)^2}+\frac1{M+s}\right)
 \le\frac8{\lambda^2(M+s)}.
\end{align*}
Use \eqref{eq:burn-in} at $t=M-1$ to conclude that
\begin{equation}
 D_t\le\frac{(s+1)^2R^2}{(M+s)^2}
                    +\frac{12S^2}{\lambda^2(M+s)},
                    \qquad M-1\le t\le T.
 \label{eq:tail-distance}
\end{equation}

Now write $h_t=\nabla F(y_t)+\nu_t$ for the companion direction.
Since $y_t$ is $\mathcal F_t$-measurable, conditional centering gives
$\E\inner{\nabla F(y_t)}{\nu_t}=0$.
Standard smoothness and $\nabla F(x^*)=0$ imply
$\norm{\nabla F(y_t)}\le L\norm{y_t-x^*}$.
Consequently,
\begin{align*}
 \E\norm{h_t}^2
 &=\E\norm{\nabla F(y_t)}^2+\E\norm{\nu_t}^2\\
 &\le L^2D_t+S^2\\
 &\le\frac{L^2(s+1)^2R^2}{(M+s)^2}
        +\left(1+\frac{12\kappa^2}{M+s}\right)S^2\\
 &\le\frac{L^2(s+1)^2R^2}{(M+s)^2}+4S^2=:G^2.
\end{align*}
For the last line, $M+s\ge s\ge4\kappa^2$ gives
$12\kappa^2/(M+s)\le3$. Thus the moment assumption of
Lemma~\ref{lem:blocks} holds on the required trajectory segment.
This argument uses smoothness of $F$ only; it imposes neither
sample smoothness nor a globally bounded stochastic gradient.

\emph{A reference function value.}
For $M-1\le t\le K-1$, the stepsize is $\gamma_t$.
Lemma~\ref{lem:convex-distance} applies because $\gamma_t\le1/(2L)$,
and gives
\[
 D_{t+1}\le D_t-\gamma_t\delta_t+\gamma_t^2S^2.
\]
Rearranging and summing makes the interior distances cancel:
\[
 \sum_{t=M-1}^{K-1}\gamma_t\delta_t
 \le D_{M-1}-D_K+S^2\sum_{t=M-1}^{K-1}\gamma_t^2
 \le D_{M-1}+S^2\sum_{t=M-1}^{K-1}\gamma_t^2.
\]
The left side is at least the smallest $\delta_t$ in the range
times the positive sum of weights. Hence
\[
 \min_{M-1\le t\le K-1}\delta_t
 \le\frac{D_{M-1}+S^2\sum_{t=M-1}^{K-1}\gamma_t^2}
              {\sum_{t=M-1}^{K-1}\gamma_t}.
\]
There are $n=K-M+1\ge (T+1)/4$ indices. Their largest denominator
in $\gamma_t$ is $K+s$ and their smallest is $M+s$, so
\[
 \sum_{t=M-1}^{K-1}\gamma_t\ge\frac{2n}{\lambda(K+s)},
 \qquad
 \sum_{t=M-1}^{K-1}\gamma_t^2\le\frac{4n}{\lambda^2(M+s)^2}.
\]
Also $K\le2M$, and therefore $K+s\le2(M+s)$. It follows that
\begin{align*}
 \min_{M-1\le t\le K-1}\delta_t
 &\le\frac{\lambda(K+s)}{2n}D_{M-1}
          +\frac{2S^2(K+s)}{\lambda(M+s)^2}\\
 &\le\frac{4\lambda(M+s)}{T+1} D_{M-1}
          +\frac{4S^2}{\lambda(M+s)}\\
 &\le\frac{4\lambda(s+1)^2R^2}{(T+1)(M+s)}
          +\frac{16S^2}{\lambda (T+1)}+\frac{4S^2}{\lambda(M+s)}\\
 &\le\frac{4\lambda(s+1)^2R^2}{(T+1)(M+s)}+\frac{32S^2}{\lambda (T+1)}.
\end{align*}
In the third line, we use the sharper bound
\eqref{eq:burn-in} at $M-1$, whose noise coefficient is $4$,
rather than the tail bound with coefficient $12$.
For the last line, use $M+s\ge (T+1)/4$.

\emph{The last companion iterate.}
Lemma~\ref{lem:blocks}, with $c=2/\lambda$, adds at most
\begin{align*}
 64G^2\gamma_T
 &=\frac{128L^2(s+1)^2R^2}{\lambda(T+s+1)(M+s)^2}
                        +\frac{512S^2}{\lambda(T+s+1)}\\
 &\le\frac{32\lambda(s+1)^2R^2}{(T+1)(M+s)}
                        +\frac{512S^2}{\lambda (T+1)}.
\end{align*}
For the initialization term, divide its coefficient by the
coefficient of $\lambda(s+1)^2R^2/[(T+1)(M+s)]$:
\[
 \frac{128\kappa^2(T+1)}{(T+s+1)(M+s)}
 =128\frac{\kappa^2}{M+s}\frac{T+1}{T+s+1}\le32.
\]
Combining this with the preceding bound on the minimum over
$M-1\le t\le K-1$, and using $4+32=36$ and
$32+512=544$, gives
\begin{equation}
 \E[F(y_T)-F^*]
 \le\frac{36\lambda(s+1)^2R^2}{(T+1)(M+s)}
                        +\frac{544S^2}{\lambda (T+1)}.
 \label{eq:last-companion-bound}
\end{equation}

\emph{Transfer to STORM.}
The comparison bound \eqref{eq:expected-transfer-app} gives
\[
 \E[F(x_T)-F^*]
 \le2\E[F(y_T)-F^*]
                  +\frac{3L\beta^2\sigma^2}{\lambda^2}.
\]
Substitute \eqref{eq:last-companion-bound}, then use
$S^2\le9\sigma^2/4$ and $\beta=1/(T+s+1)$:
\begin{align*}
 \E[F(x_T)-F^*]
 &\le\frac{72\lambda(s+1)^2R^2}{(T+1)(M+s)}
        +\frac{1088S^2}{\lambda (T+1)}
        +\frac{3\kappa\sigma^2}{\lambda(T+s+1)^2}\\
 &\le\frac{72\lambda(s+1)^2R^2}{(T+1)(M+s)}
        +\frac{2448\sigma^2}{\lambda (T+1)}
        +\frac{3\kappa\sigma^2}{\lambda(T+s+1)^2}.
\end{align*}
Finally, $s\ge4\kappa^2\ge\kappa$ and
$(T+s+1)^2\ge4(T+1)s\ge\kappa(T+1)$ imply
\[
 \frac{3\kappa\sigma^2}{\lambda(T+s+1)^2}
 \le\frac{3\sigma^2}{\lambda (T+1)}.
\]
Finally, \(M+s\ge(T+1)/4\), so \(72/[(T+1)(M+s)]\le288/(T+1)^2\), which proves the bound in Theorem~\ref{thm:strong}.
\end{proof}

\end{document}